\theoremstyle{plain}
\newtheorem{cor}{Corollary}
\newtheorem{lem}[cor]{Lemma}
\newtheorem{prop}[cor]{Proposition}
\newtheorem{thm}[cor]{Theorem}
\newtheorem*{thm*}{Theorem}
\theoremstyle{definition}
\newtheorem{definition}[cor]{Definition}
\newtheorem{remark}[cor]{Remark}
\numberwithin{cor}{section}
\numberwithin{equation}{section}
\DeclareMathOperator{\C}{C}
\DeclareMathOperator{\Supp}{Supp}
\newcommand{\E}{\mathbb{E}}
\renewcommand{\d}{\delta}
\renewcommand{\and}{\quad\textrm{ and }\quad}
\renewcommand{\P}{\mathbb{P}}
\renewcommand{\a}{\alpha}
\renewcommand{\o}{\omega}
\renewcommand{\O}{\Omega}
\newcommand{\mcF}{\mathcal{F}}
\newcommand{\F}{\mathcal F}
\newcommand{\mcC}{\mathcal C}
\newcommand{\mcS}{\mathcal{S}}
\newcommand{\R}{\mathbb{R}}
\newcommand{\N}{\mathbb{N}}
\newcommand{\Z}{\mathbb{Z}}
\newcommand{\norm}[1]{\left\| #1 \right\|}
\newcommand{\ve}{\varepsilon}
\newcommand{\Prod}{\prod\limits}
\newcommand{\abs}[1]{\left|#1\right|}
\providecommand{\ud}[1]{\, \mathrm{d} #1}
\providecommand{\dx}{\ud{x}}
\providecommand{\dy}{\ud{y}}
\providecommand{\ds}{\ud{s}}
\providecommand{\dt}{\ud{t}}
\providecommand{\dd}{\ud}
\def\XXint#1#2#3{{\setbox0=\hbox{$#1{#2#3}{\int}$ }
\vcenter{\hbox{$#2#3$ }}\kern-.6\wd0}}
\begin{document}

\title[Homogenization with divergence-free drift]{Stochastic homogenization with space-time ergodic divergence-free drift}

\author{Benjamin Fehrman}
\address{Mathematical Institute, University of Oxford, OX2 6GG Oxford, United Kingdom}
\email{Benjamin.Fehrman@maths.ox.ac.uk}

\begin{abstract}
We prove that diffusion equations with a space-time stationary and ergodic, divergence-free drift homogenize in law to a deterministic stochastic partial differential equation with Stratonovich transport noise.  In the absence of spatial ergodicity, the drift is only partially absorbed into the skew-symmetric part of the flux through the use of an appropriately defined stream matrix.  This leaves a time-dependent, spatially-homogenous transport which, for mildly decorrelating fields, converges to a Brownian noise with deterministic covariance in the homogenization limit.  The results apply to uniformly elliptic, stationary and ergodic environments in which the drift admits a suitably defined stationary and $L^2$-integrable stream matrix.
\end{abstract}

\maketitle

\section{Introduction}

In this paper, we consider the asymptotic behavior of solutions to the equation
\begin{equation}\label{intro_eq} \partial_t\rho^\ve = \nabla \cdot a^\ve \nabla\rho^\ve+\ve^{-1}b^\ve\cdot \nabla\rho^\ve+f \;\;\textrm{in}\;\;\R^d\times(0,T)\;\;\textrm{with}\;\;\rho^\ve(\cdot,0)=g,\end{equation}
for $f\in(L^2\cap \C)(\R^d\times(0,T])$, for $g\in(L^2\cap\C)(\R^d)$, and for the rescaled coefficients $a^\ve(x,t) = a(\nicefrac{x}{\ve},\nicefrac{t}{\ve^2})$ and $b^\ve(x,t)=b(\nicefrac{x}{\ve},\nicefrac{t}{\ve^2})$.  We will assume that the coefficients are stationary and ergodic:  there exist a probability space $(\O,\mathcal{F},\P)$, random variables $A\in L^\infty(\O;\R^{d\times d})$ and $B\in L^2(\O;\R^d)$, and a bi-measurable, measure-preserving, ergodic transformation group $\{\tau_{x,t}\colon\O\rightarrow\O\}_{(x,t)\in\R^{d+1}}$ such that, $\P$-a.e.,
\begin{equation}\label{intro_stationary}a(x,t) = A(\tau_{x,t}\o)\;\;\textrm{and}\;\;b(x,t)=B(\tau_{x,t}\o)\;\;\textrm{for every}\;\;\o\in\O.\end{equation}
We will assume that the diffusion matrix $A$ is uniformly elliptic:  there exist $\lambda\leq \Lambda\in(0,\infty)$ such that, $\P$-a.e.,
\begin{equation}\label{intro_elliptic} \lambda\abs{\xi}^2\leq \langle A\xi,\xi\rangle \;\;\textrm{and}\;\;\abs{A\xi}\leq \Lambda\abs{\xi}\;\;\textrm{for every}\;\;\xi\in\R^d,\end{equation}
and we will assume that the drift $B$ is $L^2$-integrable, mean-zero, and divergence-free:
\begin{equation}\label{intro_drift} B\in L^2(\O;\R^d),\;\E\left[B\right]=0\;\;\textrm{and, almost surely in the sense of distributions,}\;\;\nabla\cdot b = 0\;\;\textrm{on}\;\;\R^{d+1}.\end{equation}

We will now further remark on condition \eqref{intro_drift}.  In the case that the transformation group is spatially ergodic---that is, in the case that $\{\tau_{x,0}\colon\O\rightarrow\O\}_{x\in\R^d}$ is ergodic, it is known in $d\geq 3$ and for sufficiently mixing fields that \eqref{intro_drift} implies the existence of a stationary, skew-symmetric stream matrix $S\in L^2(\O;\R^{d\times d})$ that almost-surely satisfies, in the sense of distributions,
\begin{equation}\label{intro_stream_1}\nabla\cdot s = b\;\;\textrm{on}\;\;\R^{d+1}\;\;\textrm{for}\;\;s(x,t) = S(\tau_{x,t}\o).\end{equation}
In the absence of spatial ergodicity equation \eqref{intro_stream_1} is not solvable, in general, and must be replaced by the normalized equation
\begin{equation}\label{intro_smat}\nabla \cdot s(x,t) = b(x,t) - \underline{b}(t)\;\;\textrm{on}\;\;\R^{d+1}\;\;\textrm{for}\;\;\underline{b}(t)=\E\left[B|\mathcal{F}_{\R^d}\right](\tau_{0,t}\o),\end{equation}
for $\mathcal{F}_{\R^d}\subseteq\F$ the $\sigma$-algebra of subsets left invariant by spatial shifts of the environment:
\begin{equation}\label{intro_spatial} \mathcal{F}_{\R^d} = \{A\in\mathcal{F}\colon \P((\tau_{x,0}A) \triangle A)=0\;\;\textrm{for every}\;\;x\in\R^d\}.\end{equation}
See Proposition~\ref{prop_normalize} and Remark~\ref{rem_normalize} for a further discussion on the necessity of this normalization.

In order to partially absorb the drift into the skew-symmetric part of the flux, we will assume that there exists a stationary, $L^2$-integrable, skew-symmetric stream matrix in the sense of \eqref{intro_spatial}:
\begin{equation}\label{intro_stream} \textrm{There exists a skew-symmetric $S\in L^2(\O;\R^{d\times d})$ that almost surely satisfies \eqref{intro_spatial},}\end{equation}
which is always true if $d\geq 3$ and the spatial correlations of $B$ decay faster than a square.  See Appendix~\ref{sec_stream} for full details.  It then follows using the skew-symmetry of $S$ and \eqref{intro_smat} that \eqref{intro_eq} can be rewritten in the form
\begin{equation}\label{intro_neq} \partial_t\rho^\ve = \nabla \cdot (a^\ve+s^\ve) \nabla\rho^\ve+\ve^{-1}\underline{b}^\ve\cdot \nabla\rho^\ve +f \;\;\textrm{in}\;\;\R^d\times (0,T)\;\;\textrm{with}\;\;\rho^\ve(\cdot,0)=g,\end{equation}
for $s^\ve(x,t) = s(\nicefrac{x}{\ve},\nicefrac{t}{\ve^2})$ and $\underline{b}^\ve(t) = \underline{b}(\nicefrac{t}{\ve^2})$.  It remains to understand the effect of the spatially homogenous transport defined by the rescaling of $\underline{b}$.

A spatially homogenous transport can neither be absorbed into the diffusion nor be averaged by the diffusive properties of the environment, and will persist in the homogenization limit.  By considering the path $w^\ve$ defined by $w^\ve(t) = \ve^{-1}\int_0^t\underline{b}(\nicefrac{s}{\ve^2})\ds$, it holds under general assumptions on $\underline{b}$ that there exists $\Sigma\in\R^{d\times d}$ such that, as $\ve\rightarrow 0$, for every $T\in(0,\infty)$,
\begin{equation}\label{intro_weak_con}  w^\ve\rightarrow \Sigma B\;\textrm{in law on}\;\C([0,T];\R^d),\end{equation}
for $B$ a standard $d$-dimensional Brownian motion.  See Appendix~\ref{sec_path} for a further discussion of this point.

We are now prepared to state the main result.  We consider the steady assumption
\begin{equation}\label{steady}  \textrm{Assume \eqref{intro_stationary}, \eqref{intro_elliptic}, \eqref{intro_drift}, \eqref{intro_stream}, and \eqref{intro_weak_con},}\end{equation}
and prove that, under this assumption, the solutions of \eqref{intro_neq} converge in law to the solution of a deterministic stochastic PDE with Stratonovich transport noise.

\begin{thm}[cf.\ Theorem~\ref{thm_final} below] \label{thm_main}  Assume \eqref{steady}.  Then there exists a uniformly elliptic $\overline{a}\in \R^{d\times d}$ such that, as $\ve\rightarrow 0$, the solutions $\rho^\ve$ of \eqref{intro_neq} converge in law on $L^2([0,T];H^1(\R^d))\cap \C([0,T];L^2(\R^d))$ to the solution $\overline{\rho}$ of the stochastic PDE with Stratonovich transport noise
\begin{equation}\label{intro_spde}\partial_t\overline{\rho} = \nabla\cdot\overline{a}\nabla\overline{\rho}+\nabla\overline{\rho}\circ\Sigma \dd B_t+f \;\;\textrm{in}\;\;\R^d\times(0,\infty)\;\;\textrm{with}\;\;\overline{\rho}(\cdot,0)=g,\end{equation}
for $B$ a standard $d$-dimensional Brownian motion.  \end{thm}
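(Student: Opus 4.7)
The plan is to eliminate the singular transport by a random change of variables, homogenize the resulting shifted equation in the stationary--ergodic sense, and invert the change of variables using Stratonovich calculus. Setting $\tilde\rho^\ve(y,t):=\rho^\ve(y-w^\ve(t),t)$ a direct chain-rule computation cancels the $\ve^{-1}\underline{b}^\ve\cdot\nabla$ term and produces
\begin{equation*}
\partial_t\tilde\rho^\ve = \nabla\cdot(\tilde a^\ve+\tilde s^\ve)\nabla\tilde\rho^\ve + \tilde f^\ve,\qquad \tilde\rho^\ve(\cdot,0)=g,
\end{equation*}
with $\tilde a^\ve(y,t)=a^\ve(y-w^\ve(t),t)$, $\tilde s^\ve(y,t)=s^\ve(y-w^\ve(t),t)$, and $\tilde f^\ve(y,t)=f(y-w^\ve(t),t)$. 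Symmetrically, defining $\tilde{\overline{\rho}}(y,t):=\overline{\rho}(y-\Sigma B_t,t)$ and applying the Stratonovich chain rule to \eqref{intro_spde} converts the target SPDE into the \emph{deterministic} parabolic equation $\partial_t\tilde{\overline{\rho}}=\nabla\cdot\overline a\nabla\tilde{\overline{\rho}}+f(\cdot-\Sigma B_t,t)$. The theorem then reduces to the joint convergence $(\tilde\rho^\ve,w^\ve)\to(\tilde{\overline{\rho}},\Sigma B)$ in law, after which the continuous mapping theorem and inversion of the shift produce \eqref{intro_spde}.

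The next step is to establish uniform energy bounds and tightness. Testing \eqref{intro_neq} against $\rho^\ve$, the skew-symmetric stream $s^\ve$ and the spatially homogeneous transport $\underline{b}^\ve$ both integrate by parts to zero, giving the standard bound in $L^\infty([0,T];L^2(\R^d))\cap L^2([0,T];H^1(\R^d))$. Since the random shift is an $L^2$-isometry the same bound transfers to $\tilde\rho^\ve$, and an Aubin--Lions argument, using the shifted equation to control $\partial_t\tilde\rho^\ve$ in a negative Sobolev norm, yields tightness in $\C([0,T];L^2_{\mathrm{loc}}(\R^d))\cap L^2([0,T];H^1(\R^d))$. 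Combined with \eqref{intro_weak_con}, the pair $(\tilde\rho^\ve,w^\ve)$ is tight, and a Skorokhod representation lets us assume $w^\ve\to\Sigma B$ almost surely on a common probability space.

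The identification of the limit is then carried out by the stationary $L^2$-corrector method. Under \eqref{intro_elliptic} and \eqref{intro_stream} the corrector problem $\partial_t\phi_i-\nabla\cdot(A+S)(e_i+\nabla\phi_i)=0$ admits $L^2$ gradient-type solutions on $\O$, determining the effective matrix $\overline a_{ij}=\E[\langle(A+S)(e_j+\nabla\phi_j),e_i\rangle]$. Inserting the two-scale ansatz $\tilde\rho^\ve\approx \tilde u+\ve\phi_i(\tau_{(y-w^\ve(t))/\ve,t/\ve^2}\o)\partial_i\tilde u$ into the weak formulation of the shifted equation, the space--time ergodic theorem applied along the random orbit $(y-w^\ve(t))/\ve$ identifies the effective flux; a div--curl argument closes the weak limit and shows that $\tilde{\overline{\rho}}$ solves the deterministic limit equation with forcing $f(\cdot-\Sigma B_t,t)$. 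Undoing the shift then yields \eqref{intro_spde}.

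The main obstacle will be making the use of the ergodic theorem along randomly shifted orbits rigorous: $w^\ve$ varies macroscopically in $t$, while the correctors oscillate on the microscopic scale $\ve$, so the shift and the two-scale expansion couple nontrivially. One must show that the quenched ergodic averaging is compatible with the a.s.\ convergence $w^\ve\to\Sigma B$, and verify that no residual drift is generated by the two-scale expansion, so that the Stratonovich form in \eqref{intro_spde} is exactly what arises from the inverse change of variables. The assumption \eqref{intro_weak_con} is precisely what ensures that $w^\ve$ has the correct Brownian scaling with no spurious drift, so that the Itô--Stratonovich conversion reproduces the transport noise $\nabla\overline{\rho}\circ\Sigma \dd B_t$.
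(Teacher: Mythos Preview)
Your overall architecture---remove the singular transport by the shift $x\mapsto x-w^\ve(t)$, homogenize the shifted equation, then invert via the Stratonovich chain rule---is exactly the paper's strategy, and your use of Skorokhod representation to upgrade the weak convergence of $w^\ve$ is also the paper's key device (this is the content of Lemma~\ref{lem_main}). So the skeleton is right.

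There is, however, a concrete error in your corrector equation. You propose $\partial_t\phi_i=\nabla\cdot(A+S)(e_i+\nabla\phi_i)$ on $\O$, but the corrector that makes the two-scale expansion close in the \emph{transported} frame is the one the paper uses:
\[
\partial_t\phi_i=\nabla\cdot(a+s)(\nabla\phi_i+e_i)+\underline{b}\cdot\nabla\phi_i.
\]
The reason is the chain rule: if $\hat\phi^\ve_i(y,t)=\phi_i((y-w^\ve(t))/\ve,t/\ve^2)$, then $\partial_t\hat\phi^\ve_i$ picks up an extra $-\ve^{-2}\underline{b}\cdot\nabla\phi_i$ from the time derivative of the shift. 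With your corrector this term is uncompensated and appears at order $\ve^{-1}$ in the expansion; with the paper's corrector it cancels exactly, and the transported corrector solves the clean equation $\partial_t\tilde\phi^\ve_i=\nabla\cdot(\tilde a^\ve+\tilde s^\ve)(\nabla\tilde\phi^\ve_i+e_i)$. The two corrector equations have genuinely different solutions on $\O$, so this also affects the definition of $\overline a$.

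A second gap is the passage ``a div--curl argument closes the weak limit.'' With $S$ only in $L^2$, the flux $(\tilde a^\ve+\tilde s^\ve)\nabla\tilde\rho^\ve$ is merely $L^1$, which falls outside the standard div--curl framework; and the correctors themselves are identified only through their $L^2$ stationary gradients, not as stationary $L^2$ functions. The paper handles both issues by working with the \emph{transpose} correctors in a perturbed test function $\psi^{\ve,\delta}=\psi+\ve\tilde\phi^{t,\ve}_{i,\delta}\partial_i\psi$, where $\phi^t_{i,\delta}$ are \emph{stationary} $\delta$-regularizations (so products like $\tilde s^\ve\,\ve\tilde\phi^{t,\ve}_{i,\delta}$ are stationary $L^1$ objects to which Lemma~\ref{lem_main} applies), and then sends $\delta\to 0$ after $\ve\to 0$. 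Your proposal does not contain an analogue of this step, and without it the low integrability of $S$ will block the identification of the limit.
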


We will now make three remarks on the scope of the results before giving an overview of the proof and literature.
The first concerns the relation to the periodic case.   If $\underline{b}$ is $L$-periodic, then due to the fact that $\underline{b}$ has zero mean, for every $t\in(0,\infty)$,
\[\abs{w^\ve(t)}\leq \ve^{-1}\int_0^{\ve^2L}\abs{\underline{b}(\nicefrac{s}{\ve^2})\ds} \leq \ve\int_0^L\abs{b(s)}\ds.  \]
We therefore conclude using the integrability of $\underline{b}$ that, as $\ve\rightarrow 0$, we have that $w^\ve\rightarrow 0$ uniformly in $\C([0,\infty);\R^d)$.  So, in the periodic case, Theorem~\ref{thm_main} recovers the well-known homogenization result for periodic coefficients with $\Sigma = 0$.  The appearance of the Brownian transport term is fundamentally stochastic, and requires the correlations of $\underline{b}$ to decay sufficiently quickly.  See Appendix~\ref{sec_path} for some further details.

The second concerns the stream matrix.  To illustrate the role of the stream matrix $S$ it is useful to consider two motivating examples.  The first is the case of a spatially homogenous drift, such as when $b(t)=x'(t)$ for $x(t)$ a smooth interpolation of a simple random walk.  Donsker's theorem proves that, as $\ve\rightarrow 0$,
\[w^\ve(t) = \ve^{-1}\int_0^t b(\nicefrac{t}{\ve^2})\dt = \ve \int_0^{\nicefrac{t}{\ve^2}}b'(t)\dt = \ve x(\nicefrac{t}{\ve^2}),\]
converges in law to a Brownian motion, and the solutions $\rho^\ve$ of 
\begin{equation}\label{no_hom}\partial_t\rho^\ve = \Delta\rho^\ve+\ve^{-1}b^\ve \cdot \nabla\rho^\ve+f,\end{equation}
converge in law to the solution $\overline{\rho}$ of the stochastic PDE with Stratonovich noise
\[\partial_t\overline{\rho} = \Delta\overline{\rho}+\nabla\overline{\rho}\circ \dd B_t+f.\]
In this case, the stream matrix is zero and does nothing to absorb the drift into the skew-symmetric part of the flux.  On the other hand, when the environment is spatially ergodic or when the drift is time-independent, we have using the mean-zero condition that
\begin{equation}\label{intro_conditional_exp}\underline{b} = \E\left[\underline{B}|\mathcal{F}_{\R^d}\right]=\E[\underline{B}]=0.\end{equation}
Hence, we have that $w^\ve=0$ for every $\ve\in(0,1)$ and therefore that $\Sigma = 0$, which recovers the known results of Kozma, T\'oth, and the author \cite{Feh2022, KozTot2017, Tot2018} on the homogenization of \eqref{intro_neq} in spatially ergodic environments.  The role of the stream matrix is therefore to split the drift into two parts, each satisfying one of the above two cases.  That is, a part with zero conditional expectation \eqref{intro_conditional_exp} that can be absorbed into the flux, and a part that acts as a spatially homogenous transport which persists in the homogenization limit and converges to a Brownian transport noise.

And finally, we observe that under the stronger assumption that, as $\ve\rightarrow 0$, the paths $w^\ve$ converge $\P$-a.e.\ in $\C([0,T];\R^d)$ or, for example, that the enhancements of $w^\ve$ converge in the rough path topology to some $\P$-a.e.\ $\alpha$-H\"older continuous rough path $z$, a much simplified version of the methods in this paper prove that the solutions of \eqref{intro_neq} converge $\P$-a.e.\ to the solution of the rough PDE
\begin{equation}\label{intro_rough}\partial_t\overline{\rho} = \nabla\cdot\overline{a}\nabla\overline{\rho}+\nabla\overline{\rho}\circ\dd z+f \;\;\textrm{in}\;\;\R^d\times(0,\infty)\;\;\textrm{with}\;\;\overline{\rho}(\cdot,0)=g,\end{equation}
understood in the sense that $\overline{\rho}$ is a solution of \eqref{intro_rough} if and only if $\tilde{\overline{\rho}}(x,t) = \overline{\rho}(x+z(t),t)$ solves the equation
\[\partial_t\tilde{\overline{\rho}} = \nabla\cdot\overline{a}\nabla\tilde{\overline{\rho}}+\tilde{f} \;\;\textrm{in}\;\;\R^d\times(0,\infty)\;\;\textrm{with}\;\;\overline{\rho}(\cdot,0)=g,\]
for $\tilde{f}(x,t) = f(x+z(t),t)$.\\

\subsection{Elements of the proof.}  The proof is based on the construction of sublinear homogenization correctors and the perturbed test function method.  However, in this case, there are several important differences to observe.  The standard two-scale expansion
\[\rho^\ve\simeq \overline{\rho}+\ve\phi^\ve_i\partial_i\overline{\rho},\]
suggests that, by equating powers in $\ve$, the homogenization correctors $\phi_i$ should satisfy the equation
\[\partial_t\phi_i = \nabla\cdot (a+s)(\nabla\phi_i+e_i)+\underline{b}\cdot (\nabla\phi_i+e_i).\]
However, as we observed above in the case \eqref{no_hom}, our expectation is that the effect of $\underline{b}$ will persist in the homogenization limit.  The correctors do not account for this effect, and for this reason we instead construct in Section~\ref{sec_correctors} homogenization correctors satisfying the equation
\[\partial_t\phi_i = \nabla\cdot (a+s)(\nabla\phi_i+e_i)+\underline{b}\cdot \nabla\phi_i,\]
which becomes, in the homogenization scaling for $\phi^\ve_i(x,t) = \phi_i(\nicefrac{x}{\ve},\nicefrac{t}{\ve^2})$,
\[\partial_t\phi^\ve_i = \nabla\cdot (a^\ve+s^\ve)(\nabla\phi_i+e_i)+\ve^{-1}\underline{b}\cdot \nabla\phi_i.\]
The singular forcing defined by $\ve^{-1}\underline{b}$ makes it intractable to obtain stable estimates for $\partial_t\phi^\ve_i$, which are necessary in order to prove the sublinearity of the correctors in strong topologies.  It is therefore convenient to work with the transported correctors $\tilde{\phi}^\ve_i(x,t) = \phi^\ve(x+w^\ve(t),t)$ for the path $w^\ve(t) = \ve^{-1}\int_0^t\underline{b}(\nicefrac{s}{\ve^2})\ds$, which solve the equations
\[\partial_t\tilde{\phi}^\ve_i = \nabla\cdot(\tilde{a}^\ve+\tilde{s}^\ve)(\nabla\tilde{\phi}^\ve_i+e_i),\]
with transported coefficients $\tilde{a}^\ve$ and $\tilde{s}^\ve$.  However, while the transported equation is well-behaved, obtaining it comes at the significant cost of destroying the stationarity of the environment.  It is no longer clear, for example, that transported, stationary quantities will satisfy any kind of large-scale averaging or ergodicity.  To address this, the essential observation is that if we knew a priori as $\ve\rightarrow 0$ that $\P$-a.e.\ the $\nabla\phi^\ve_i\rightharpoonup 0$ converge weakly in $L^2$ and if we knew that the paths $w^\ve$ were $\P$-a.e.\ converging strongly in $\C([0,T];\R^d)$, then it would remain the case that $\nabla\tilde{\phi}^\ve_i$ converges weakly to zero.  Assumption~\eqref{intro_weak_con} is much weaker than such $\P$-a.e.\ strong convergence.  But, when combined with the Skorokhod representation theorem, this intuition can be made precise and is enough to characterize the large-scale averages of transported, stationary quantities in probability in Lemma~\ref{lem_main} below.

We first identify the correctors by their stationary gradients in Proposition~\ref{prop_cor_1}, and based on Lemma~\ref{lem_main} we prove in Proposition~\ref{prop_sublinear} that they are sublinear in the sense that, for every $R\in (0,\infty)$, as $\ve\rightarrow 0$,
\[\ve\phi^\ve_i\rightarrow 0\;\;\textrm{strongly in $L^2(\mcC_R)$ in probability,}\]
for $\mcC_R = B_R\times[0,R^2]$.  The sublinearity plays an important role in Proposition \ref{prop_unique} to prove that the stationary gradients of the correctors are unique, which in turn plays an important role in defining in Proposition~\ref{prop_hom_mat} the effective coefficient
\[\overline{a}e_i = \E\left[(A+S)(\nabla\phi_i+e_i)\right],\]
and proving that it is uniformly elliptic.

The proof of homogenization is based on the transpose correctors introduced in Section~\ref{sec_transpose} and the perturbed test function method.  However, due to the low-integrability of the stream matrix $S$ and drift $B$, it is necessary to introduce several modifications to the standard technique.  In Theorem~\ref{thm_main}, we introduce stationary approximations of the true homogenization corrector to treat certain error terms and, once we have obtained tightness of all of the relevant quantities, we obtain the convergence in law using the Skorokhod representation theorem.

In Appendix~\ref{sec_stream} we provide general conditions that guarantee the existence of a stream matrix satisfying \eqref{intro_stream}.  This result is essentially well-known, going back to \cite{Koz1985} and \cite[Proposition~4, Proposition~5]{KozTot2017}, but we include the details here to explain the appearance of the conditional expectation.  In Appendix~\ref{sec_path}, we provide general conditions for $\underline{b}$ under which \eqref{intro_weak_con} holds true, based on results that can be found, for example, in Billingsley \cite[Chapter~4]{Bil1999}.

\subsection{Overview of the literature.}  In the case $A=I$, for a sufficiently regular drift, solutions of \eqref{intro_eq} are related to a rescaling of the stochastic differential equation
\[\dd X_t=\sqrt{2}\dd B_t+b\left(X_t,t\right)\dt,\]
which is the passive tracer model.  This is a simple approximation of transport in a turbulent, incompressible flow, and has been used to describe passive advected quantities such as temperature in applications to hydrology, meteorological sciences, and oceanography.  See, for instance, Csanady \cite{Csa1973}, Frish \cite{Fri1995}, and Monin and Yaglom \cite{MonYag2007,MonYag2007II} for more details.

The theory of homogenization for equations with periodic coefficients is contained in the references Bensoussan, Lions, and Papanicolaou \cite{BenLioPap2011} and Jikov, Kozlov, and Ole\u{\i}nik \cite{JikKozOle1994}.  The study of divergence form equations, and non-divergence form equations without drift, with random coefficients was initiated by Papanicolaou and Varadhan \cite{PapVar1981,PapVar1982}, Osada \cite{Osa1983}, and Kozlov \cite{Koz1985}.  The study of \eqref{intro_eq} with time-independent, divergence-free drift was initiated by \cite{Osa1983}, who considered the case that the drift admits a bounded stream matrix,  and Oelschl\"ager \cite{Oel1988} who established an invariance principle and homogenization for time-independent equations like \eqref{intro_eq} in probability on the whole space by assuming the existence of an $L^2$-integrable, $\C^2$-smooth stream matrix.  More recently, in the discrete setting, Kozma and T\'oth  \cite{KozTot2017} established an invariance principle in probability with respect to the environment for the analogous discrete random walk under the so-called $\mathcal{H}_{-1}$-condition, which is equivalent to the existence of a stationary, $L^2$-integrable stream matrix.  T\'oth \cite{Tot2018} then proved a quenched central limit theorem in this setting assuming the existence of an $L^{2+\d}$-integrable stream matrix, using an adaptation of Nash's moment bound.  The higher $L^{d\vee(2+\d)}$-integrability assumption was introduced in Avellaneda and Majda \cite{AveMaj1991} to prove the quenched homogenization of the parabolic version of \eqref{intro_eq} on the whole space.  The author \cite{Feh2022} proved that the existence of an $L^2$-integrable stream matrix was sufficient to prove the homogenization of \eqref{intro_neq} in the time-independent setting, and showed that under the higher integrability condition of \cite{AveMaj1991} the environment satisfies a large-scale H\"older regularity estimate and first-order Liouville principle.  Related problems under more restrictive integrability assumptions have been considered by Fannjiang and Komorowski \cite{FanKom1997}.

We also mention the works of Kipnis and Varadhan \cite{KipVar1986} and Sidoravicius and Sznitman \cite{SidSzn2004} who established a quenched invariance principle for random walks on supercritical percolation clusters in $d\geq 4$ and a quenched invariance principle for the random conductance model with uniformly elliptic, i.i.d.\ conductances in an arbitrary dimension, Andres, Barlow, Deuschel, and Hambly \cite{AndBarDeuHam2014} who establish a general invariance principle for the random conductance model under the assumption that the conductances almost surely percolate, and Deuschel and K\"osters \cite{DeuKos2008} who study random walks satisfying the bounded cycle condition.

Time-dependent problems have received somewhat less attention, and have essentially been restricted to the case that \eqref{intro_stream_1} is solvable.   Landim, Olla, and Yau \cite{LanOllYau1998} considered the case of a bounded stream matrix in the sense of \eqref{intro_stream_1}, Fannjiang and Komorowski \cite{FanKom1999,FanKom2002} treat the case, for example, of an $L^p$-integrable stream matrix in the sense of \eqref{intro_stream_1} for $p=d+2$, and Komorowski and Olla \cite{KomOll2001} treat $L^2$-integrable drifts satisfying a condition on their spatial energy spectrums that guarantees the existence of a stream matrix in the sense of \eqref{intro_stream_1}.  An essential difference between these works and the current work is the use of the stream matrix to exploit the time ergodicity and to observe the stochastic transport.  Komorowski and Olla \cite{KomOll2002} have provided a counterexample to the annealed homogenization of equations like \eqref{intro_eq} on the whole space for drifts that do not admit a square-integrable stream matrix.  We also mention that \cite{KomOll2002},  T\'oth and V\'alko \cite{TotVal2012}, and Cannizzaro, Haunschmid-Sibitz, Toninelli \cite{CanHauTon2021} have shown that a random walk or diffusion failing to satisfy the $\mathcal{H}_{-1}$-condition can exhibit superdiffusive behavior.

\subsection{Organization of the paper.}  The paper is organized as follows.  Section~\ref{sec_correctors} is split into four subsections.  Section~\ref{sec_fa} introduces the functional analytic techniques on $\O$ that will be used to construct the homogenization correctors, Section~\ref{sec_cor} characterizes the large-scale averages of stationary quantities transported by $w^\ve$ and identifies the homogenization correctors by their unique stationary gradients.  The transpose correctors are defined in Section~\ref{sec_transpose}, and the homogenized coefficient $\overline{a}$ is defined and shown to be uniformly elliptic in Section~\ref{sec_hom}.  We prove Theorem~\ref{thm_main} in Section~\ref{sec_homogenize}.  Appendix~\ref{sec_stream} shows that a stream matrix exists in the sense of \eqref{intro_smat} whenever $d\geq 3$ and whenever the correlations of $B$ decay faster than a square.  Finally, in Appendix~\ref{sec_path}, we provide general conditions that guarantee \eqref{intro_weak_con}.

\section{The homogenization correctors}\label{sec_correctors}

We will now introduce the probabilistic framework that will be used to construct the homogenization correctors and stationary stream matrix.  The tools developed in Section~\ref{sec_fa} will be used to lift the equation defining the homogenization correctors to the probability space.  In Section~\ref{sec_cor}, we prove a fundamental lemma that will be used to understand the large scale averages of transported quantities, and then prove the existence and uniqueness of sublinear homogenization correctors.  Section~\ref{sec_transpose} introduces the transpose correctors, and Section~\ref{sec_hom} defines the effective coefficient $\overline{a}$ and proves that $\overline{a}$ is uniformly elliptic.

\subsection{Functional analysis on $\O$.}\label{sec_fa}    We will first describe how we understand the corrector equation, and the equations defining the stream matrix, on the probability space $\O$.  Following the methods of \cite{Koz1985,PapVar1981,PapVar1982}, the transformation group $\{\tau_{x,t}\}_{(x,t)\in\R^{d+1}}$ defines so-called horizontal derivatives $\{D_i\}_{i\in\{0,1,\ldots d\}}$:  we define
\[\mathcal{D}(D_0) = \{f\in L^2(\O)\colon \lim_{h\rightarrow 0}h^{-1}(f(\tau_{0,h}\o)-f(\o))\;\textrm{exists strongly in}\;\;L^2(\O)\},\]
and, for every $i\in\{1,\ldots,d\}$,
\[\mathcal{D}(D_i)=\{f\in L^2(\O)\colon \lim_{h\rightarrow 0}h^{-1}(f(\tau_{he_i,0}\o)-f(\o))\;\textrm{exists strongly in}\;L^2(\O)\}.\]
The operators $D_i\colon \mathcal{D}(D_i)\rightarrow L^2(\O)$ defined by $D_if=\lim_{h\rightarrow 0}h^{-1}(f(\tau_{he_i}\o)-f(\o))$ are closed, densely defined operators on $L^2(\O)$, and we define the abstract Sobolev space $\mathcal{H}^1(\O)=\cap_{i=0}^d\mathcal{D}(D_i)$.  For $\phi\in \mathcal{H}^1(\O)$ we will write $D\phi=(D_1\phi,\ldots,D_d\phi)$ for the spatial gradient and $D_0\phi$ for the derivative in time.  We will write $\mathcal{H}^{-1}(\O)$ for the dual space of $\mathcal{H}^1(\O)$.

A natural class of test functions can be constructed by convolution:  for every $\psi\in\C^\infty_c(\R^{d+1})$ and $f\in L^\infty(\O)$ let $\psi_f\in L^\infty(\O)$ be the convolution
\[\psi_f(\o)=\int_{\R^{d+1}}f(\tau_{x,t}\o)\psi(x)\dx\dt,\]
and let $\mathcal{D}(\O)$ be the space of all such functions.  The dominated convergence theorem proves that every $\psi\in\mathcal{D}(\O)$ is infinitely differentiable with respect to the operators $D_i$, and that the space $\mathcal{D}(\O)$ is dense in $L^p(\O)$ for every $p\in[1,\infty)$.  The dual of $\mathcal{D}(\O)$ will be written $\mathcal{D}'(\O)$, and distributional inequalities are understood in $\mathcal{D}'(\O)$ in the sense that, for example, for $f\in L^1(\O)$,
\[D_if=0\;\;\textrm{if and only if}\;\;\E[fD_i\psi]=0\;\;\textrm{for every}\;\;\psi\in\mathcal{D}(\O).\]

The space of vector fields $L^2(\O;\R^d)$ admits the following Helmholtz decomposition.  The space of potential fields on $\O$ is defined by
\[L^2_{\textrm{pot}}(\O)=\overline{\left\{D\psi\in L^2(\O;\R^d)\colon \psi\in \mathcal{H}^1(\O)\right\}}^{L^2(\O;\R^d)},\]
which is the strong $L^2(\O;\R^d)$-closure of the space of $\mathcal{H}^1(\O)$-gradients.  The space of divergence-free fields is
\[L^2_{\textrm{sol}}(\O)=\{V\in L^2(\O;\R^d)\colon D\cdot V=0\}.\]
It follows that the space $L^2(\O;\R^d)$ then admits the orthogonal decomposition
\[L^2(\O;\R^d)=L^2_{\textrm{pot}}(\O)\oplus L^2_{\textrm{sol}}(\O),\]
which can be deduced, for example, from \cite[Proposition~2.5]{Feh2022}.  We are now prepared to explain the solvability condition \eqref{intro_smat}.

\begin{prop}\label{prop_normalize}  Let $\Phi\in L^2_{\textrm{pot}}(\O)$.  Then, it holds that
\[\E\left[\Phi |\mathcal{F}_{\R^d} \right] = 0,\]
for the sigma algebra \eqref{intro_spatial} of subsets that are invariant with respect to spatial shifts of the environment.  \end{prop}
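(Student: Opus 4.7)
The plan is to reduce the statement to the case $\Phi=D\psi$ for $\psi\in\mathcal{H}^1(\O)$, and then verify the conditional expectation vanishes by an integration by parts on $\O$ against bounded $\mathcal{F}_{\R^d}$-measurable test functions, using only the measure-preserving property of the spatial shifts.

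Since $L^2_{\textrm{pot}}(\O)$ is by definition the strong $L^2(\O;\R^d)$-closure of $\{D\psi:\psi\in\mathcal{H}^1(\O)\}$, and since $\E[\,\cdot\,|\mathcal{F}_{\R^d}]$ is a contraction on $L^2(\O;\R^d)$, it suffices to establish the claim for $\Phi=D\psi$ with $\psi\in\mathcal{H}^1(\O)$. The general case then follows by passing to the limit in $L^2$.

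For such $\psi$ fix $i\in\{1,\dots,d\}$, let $g\in L^\infty(\O)$ be $\mathcal{F}_{\R^d}$-measurable, and observe that by \eqref{intro_spatial} we have $g(\tau_{-he_i,0}\o)=g(\o)$ $\P$-a.e.\ for every $h\in\R$. Since $\tau_{he_i,0}$ preserves $\P$,
\[
\E\!\left[\bigl(\psi(\tau_{he_i,0}\o)-\psi(\o)\bigr)g(\o)\right]
=\E\!\left[\psi(\o)\bigl(g(\tau_{-he_i,0}\o)-g(\o)\bigr)\right]=0.
\]
Dividing by $h$ and letting $h\to 0$, the strong convergence of the difference quotient defining $D_i\psi$ in $L^2(\O)$ yields $\E[(D_i\psi)\,g]=0$. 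By the characterization of conditional expectation this gives $\E[D_i\psi\,|\,\mathcal{F}_{\R^d}]=0$ for $i=1,\dots,d$, so $\E[D\psi\,|\,\mathcal{F}_{\R^d}]=0$, as required.

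There is no substantial obstacle beyond the elementary but crucial observation that $\mathcal{F}_{\R^d}$-measurable functions are annihilated by the spatial horizontal derivatives; the only mild care needed is to choose a sufficiently rich family of bounded $\mathcal{F}_{\R^d}$-measurable test functions (e.g.\ indicators of sets in $\mathcal{F}_{\R^d}$) so as to justify passing from the integrated identity to the conditional expectation statement, and then to exchange the limit in the defining sequence of $L^2_{\textrm{pot}}(\O)$ with the conditional expectation, both of which are standard.
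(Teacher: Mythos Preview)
Your proof is correct and follows essentially the same approach as the paper: reduce to gradients $D\psi$ by density and the $L^2$-continuity of conditional expectation, then test against bounded $\mathcal{F}_{\R^d}$-measurable $g$ and use the measure-preserving property of spatial shifts together with the spatial invariance of $g$ to conclude $\E[(D_i\psi)g]=0$. The only cosmetic difference is that you compute the difference quotient before passing to the limit, while the paper writes the limit inside the expectation directly; the content is identical.
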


\begin{proof}  Let $\phi\in \mathcal{H}^1(\O)$ and let $g\in L^\infty(\O)$ be $\F_{\R^d}$-measurable.  Then, for every $i\in\{1,\ldots,d\}$, using the strong convergence built into the definition of the domains of the $D_i$ and the fact that the transformation group preserves the measure,
\[\E[D_i\phi g ] = \E\left[\lim_{\abs{h}\rightarrow 0}(h^{-1}(\phi(\tau_{he_i,0}\o)-\phi(\o))g(\o)\right] =  \E\left[\lim_{\abs{h}\rightarrow 0}(h^{-1}(g(\tau_{-he_i,0}\o)-g(\o))\phi(\o)\right]=0,\]
where the final equality follows from the fact that $g$ is $\F_{\R^d}$-measurable.  The claim then follows from the density of the set $\{D\phi\colon \mathcal{H}^1(\O)\}$ in $L^2_{\textrm{pot}}(\O)$.   \end{proof}

\begin{remark}\label{rem_normalize}  In the case of the stream matrix $S$, it follows from Proposition~\ref{prop_normalize} that if $D\cdot S = B$ in $\O$ then we have that
\[0 = \E\left[D\cdot S|\mcF_{\R^d}\right] = \E\left[B|\mcF_{R^d}\right]\;\;\textrm{in}\;\;L^2(\O;\R^d).\]
\end{remark}

\subsection{The homogenization corrector}\label{sec_cor}  In this section, we first identify the homogenization correctors by their stationary gradients on the probability space in Proposition~\ref{prop_cor_1}.  Then, since we will primarily work with the transported correctors, we characterize in Lemma~\ref{lem_main} the large-scale averages of stationary quantities transported by the paths $w^\ve$.  This characterization will be used to prove the sublinearity of the transported correctors in Section~\ref{prop_sublinear}, and their uniqueness in Proposition~\ref{prop_unique}.  In particular, the proof of Proposition~\ref{prop_unique} is strongly motivated by \cite[Lemma~3.27]{Oel1988}, and extends \cite[Lemma~3.27]{Oel1988} and \cite[Proposition~2.4]{Feh2022} to the time-dependent and probabilistic settings.

\begin{prop}\label{prop_cor_1}  Assume \eqref{steady}.  Then for every $F\in L^2(\O;R^d)$ there exists $\Phi\in L^2_{\textrm{pot}}(\O)$ that satisfies the distributional equalities
\begin{equation}\label{hc_0}D_0\Phi_i = D_i\left( D\cdot (A+S)\Phi+\underline{B}\cdot \Phi + D\cdot F\right)\;\;\textrm{on $\O$ for every}\;\;i\in\{1,\ldots,d\}.\end{equation}
This implies $\P$-a.e.\ that there exists $\phi\in L^2_{\textrm{loc}}(\R;H^1_{\textrm{loc}}(\R^d))$ that satisfies
\[\nabla\phi(x,t) = \Phi(\tau_{x,t}\o)\;\;\textrm{and}\;\;\fint_{\mathcal{C}_1}\phi = 0,\]
for $\mathcal{C}_1 = B_1\times[0,1]$, and that satisfies distributionally
\[\partial_t\phi = \nabla\cdot(a+s)\nabla\phi+\underline{b}\cdot\nabla\phi+\nabla\cdot f,\]
for $f(x,t)=F(\tau_{x,t}\o)$.
\end{prop}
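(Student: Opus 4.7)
My plan is to construct $\Phi$ on the probability space via an elliptic regularization, extract a weak limit in $L^2_{\textrm{pot}}(\O)$, and then realize it as the stationary gradient of a physical corrector by the standard potential construction.

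For $\nu > 0$ I would introduce the regularized problem on $\O$: find $\phi_\nu \in \mathcal{H}^1(\O)$ satisfying, for every $\psi \in \mathcal{D}(\O)$,
\[\nu\, \E[\phi_\nu \psi] - \E[\phi_\nu D_0\psi] + \E[(A+S)D\phi_\nu \cdot D\psi] - \E[(\underline{B}\cdot D\phi_\nu)\psi] = -\E[F\cdot D\psi].\]
Because $S$ and $\underline{B}$ are only $L^2$-integrable, Lax--Milgram does not apply directly on $\mathcal{H}^1(\O)$, so I would first approximate by truncations $S_N := S\,\ind_{\{\abs{S}\le N\}}$ and $\underline{B}_N := \underline{B}\,\ind_{\{\abs{\underline{B}}\le N\}}$, solve for $\phi_\nu^N$ by Lax--Milgram, and pass $N\to\infty$. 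The energy estimate is powered by three algebraic cancellations: (i) $D_0$ is skew-adjoint on $L^2(\O)$ because $\{\tau_{x,t}\}$ preserves $\P$; (ii) $\E[SD\phi\cdot D\phi]=0$ by the skew-symmetry of $S$; (iii) $\E[(\underline{B}\cdot D\phi)\phi] = \tfrac12\E[\underline{B}\cdot D(\phi^2)] = 0$, which follows from the $\mcF_{\R^d}$-measurability of $\underline{B} = \E[B|\mcF_{\R^d}]$ combined with Proposition~\ref{prop_normalize} applied to the potential field $D(\phi^2/2)$.

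Testing with $\phi_\nu^N$ and combining (i)--(iii) with the uniform ellipticity of $A$ yields
\[\nu\,\|\phi_\nu^N\|_{L^2(\O)}^2 + \lambda\,\|D\phi_\nu^N\|_{L^2(\O;\R^d)}^2 \le C\|F\|_{L^2(\O;\R^d)}^2\]
uniformly in $N$ and $\nu$. I would pass $N\to\infty$ by weak compactness to obtain $\phi_\nu \in \mathcal{H}^1(\O)$ solving the unapproximated regularized equation, and then extract a further weak limit $\Phi := \mathrm{w\text{-}lim}_{\nu\to 0}\, D\phi_\nu \in L^2(\O;\R^d)$; the weak closedness of $L^2_{\textrm{pot}}(\O)$ forces $\Phi\in L^2_{\textrm{pot}}(\O)$. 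Because $\phi_\nu$ is only controlled by $C/\sqrt{\nu}$ in $L^2(\O)$, I cannot pass to the limit in the equation naively. Instead, I would test against gradient functions $\psi = D_i\eta$ with $\eta\in\mathcal{D}(\O)$, so that skew-adjoint integration by parts converts every occurrence of $\phi_\nu$ into a component of $\Phi_\nu = D\phi_\nu$; in particular the singular term becomes $\nu\,\E[\phi_\nu D_i\eta] = -\nu\,\E[\Phi_{\nu,i}\eta]\to 0$, and the remaining terms converge against the bounded $\Phi_\nu$ to give \eqref{hc_0} for $\Phi$.

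For the physical realization, since $\Phi \in L^2_{\textrm{pot}}(\O)$, the standard potential construction (cf.\ \cite[Proposition~2.5]{Feh2022}) produces $\P$-a.s.\ a scalar $\phi\in L^2_{\textrm{loc}}(\R;H^1_{\textrm{loc}}(\R^d))$ with $\nabla\phi(x,t)=\Phi(\tau_{x,t}\o)$, unique up to an additive function of $t$ alone. Differentiating the candidate physical PDE in $x$ reduces to the identity \eqref{hc_0} at the level of $\Phi(\tau_{x,t}\o)$, so the error $\partial_t\phi - \nabla\cdot(a+s)\nabla\phi - \underline{b}\cdot\nabla\phi - \nabla\cdot f$ is a distribution depending only on $t$, and the remaining $t$-dependent freedom in $\phi$ is used to kill it, after which the single global constant is fixed by $\fint_{\mcC_1}\phi=0$. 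The time regularity then follows from the PDE, which places $\partial_t\phi \in L^2_{\textrm{loc}}(\R;H^{-1}_{\textrm{loc}}(\R^d))$. The principal obstacle is the low $L^2$-integrability of $\underline{B}$ and $S$, which both forces the truncation step and makes the three antisymmetric cancellations indispensable at every stage; a close second is the singular behavior of $\phi_\nu$ as $\nu\to 0$, which forces the use of gradient test functions to extract the limiting equation for $\Phi$ alone.
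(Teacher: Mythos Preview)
Your overall strategy matches the paper's: truncate $S$ and $\underline{B}$, regularize, exploit the three antisymmetric cancellations for a uniform energy estimate, extract a weak limit in $L^2_{\textrm{pot}}(\O)$, and test against gradient functions $D_i\eta$ to recover \eqref{hc_0}. There is, however, one technical gap in your Lax--Milgram step. Your regularized bilinear form
\[a(\phi,\psi) = \nu\,\E[\phi\psi] - \E[\phi D_0\psi] + \E[(A+S_N)D\phi\cdot D\psi] - \E[(\underline{B}_N\cdot D\phi)\psi]\]
is \emph{not} coercive on $\mathcal{H}^1(\O)$: the cancellations (i)--(iii) give $a(\phi,\phi)\ge \nu\|\phi\|_{L^2}^2 + \lambda\|D\phi\|_{L^2}^2$, with no control whatsoever on $\|D_0\phi\|_{L^2}^2$. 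So Lax--Milgram does not apply on $\mathcal{H}^1(\O)$, and on the smaller space $\cap_{i=1}^d\mathcal{D}(D_i)$ the form is not even defined, since the term $\E[\phi D_0\psi]$ requires $\psi\in\mathcal{D}(D_0)$.

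The paper repairs this by adding a second artificial term to the regularization, solving
\[\beta\phi + D_0\phi = \beta D_0(D_0\phi) + D\cdot(A+S_n)D\phi + \underline{B}_n\cdot D\phi + D\cdot F\]
on $\mathcal{H}^1(\O)$. The extra $\beta D_0D_0$ term restores full coercivity and yields the additional bound $\beta\E[(D_0\phi_{\beta,n})^2]\le C$; after applying $D_i$ and passing $n\to\infty$, $\beta\to 0$, both the $\beta\phi$ and $\beta D_0D_0\phi$ contributions vanish for exactly the reason you give for the $\nu\phi$ term. With this one amendment your argument is the paper's argument. For the physical realization the paper observes that the $(d+1)$-vector $\big(D\cdot(A+S)\Phi+\underline{B}\cdot\Phi+D\cdot F,\ \Phi\big)$ is space-time curl-free and integrates directly, which is equivalent to your two-step spatial-potential-then-fix-$t$ construction.
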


\begin{proof}  For every $n\in\N$ let $S_n\in L^\infty(\O;\R^{d\times d})$ be the skew-symmetric matrix defined by $(S_n)_{ij} = ((n\wedge S_ij)\vee -n)$, and let $\underline{B}_n\in L^\infty(\O;\R^d)$ be the vector defined by $(\underline{B}_n)_i = ((n\wedge \underline{B}_i\vee -n)$.  For every $\beta\in(0,1)$ and $n\in\N$ consider the equation
\begin{equation}\label{hc_1}\beta\phi+D_0\phi = \beta D_0(D_0\phi)+D\cdot(A+S_n)D\phi+\underline{B}_n\cdot D\phi+D\cdot F\;\;\textrm{in}\;\;\mathcal{H}^{1}(\O)\cap L^2(\O).\end{equation}
It follows from the fact that $\underline{B}_n$ is invariant with respect to spatial shifts of the environment and bounded that, for every $\phi\in\mathcal{H}^1(\O)\cap L^2(\O)$,
\[\E\left[\underline{B}_n\cdot D\phi\cdot\phi\right] = \frac{1}{2}\E\left[\underline{B}_n\cdot D(\phi)^2\right] = 0\;\;\textrm{and}\;\;\E\left[D_0\phi \phi\right] = \frac{1}{2}\E\left[D_0\phi^2\right]=0,\]
and it follows from the skew-symmetry and boundedness of $S_n$ that, for every $\phi\in\mathcal{H}^1(\O)$,
\[\E\left[S_nD\phi\cdot D\phi\right]=0.\]
It then follows from the Lax-Milgram theorem that, for every $\beta\in(0,1)$ and $n\in\N$, there exists a unique solution $\phi_{\beta,n}\in \mathcal{H}^1(\O)\cap L^2(\O)$ of \eqref{hc_1} that satisfies, using H\"older's inequality, Young's inequality, and \eqref{intro_elliptic} that
\begin{equation}\label{hc_2}\beta\E\left[\phi_{\beta,n}^2\right]+\beta\E\left[(D_0\phi_{n,\beta})^2\right]+\frac{\lambda}{2}\E\left[\abs{D\phi_{n,\beta}}^2\right]\leq \frac{1}{2\lambda}\E\left[\abs{F}^2\right].\end{equation}
Furthermore, since it holds that every $\phi\in \mathcal{H}^1(\O)\cap L^2(\O)$ satisfies the distributional equalities
\begin{equation}\label{hc_7}D_iD_j\phi = D_jD_i\phi\;\;\textrm{in}\;\;\mathcal{D}'(\O)\;\;\textrm{for every}\;\;i,j\in\{0,1,\ldots,d\},\end{equation}
we have for every $\beta\in(0,1)$ and $n\in\N$ that, distributionally for every $i\in\{1,\ldots,d\}$,
\begin{align}\label{hc_3}
& D_0D_i\phi_{n,\beta}  = D_iD_0\phi_{n,\beta}
\\ \nonumber & = D_i\left(-\beta\phi_{n,\beta}+\beta D_0(D_0\phi_{n,\beta})+D\cdot(A+S_n)D\phi_{n,\beta}+\underline{B}_n\cdot D\phi_{n,\beta}+D\cdot F\right)\;\;\textrm{in}\;\;D'(\O).
\end{align}
It then follows from \eqref{hc_2} that, after passing to a subsequence $n\rightarrow\infty$ and $\beta\rightarrow 0$, there exists $\Phi\in L^2_{\textrm{pot}}(\O)$ such that, along the subsequence $n\rightarrow\infty$ and $\beta\rightarrow 0$,
\begin{equation}\label{hc_4}\Phi_{n,\beta}\rightharpoonup \Phi\;\;\textrm{weakly in}\;\;L^2_{\textrm{pot}}(\O),\end{equation}
and, as $n\rightarrow\infty$,
\begin{equation}\label{hc_6}S_n\rightarrow S\;\;\textrm{and}\;\;\underline{B}_n\rightarrow\underline{B}\;\;\textrm{strongly in $L^2(\O;\R^{d\times d})$ and $L^2(\O;\R^d)$.}\end{equation}
It follows from \eqref{hc_2}, \eqref{hc_7}, \eqref{hc_4}, and \eqref{hc_6} that, after passing along along the subsequence $n\rightarrow\infty$ and $\beta\rightarrow 0$ in \eqref{hc_3}, distributionally for every $i,j\in\{1,\ldots,d\}$,
\begin{align}\label{hc_5}
& D_0\Phi_i  = D_i\left(D\cdot(A+S)\Phi+\underline{B}\cdot \Phi+D\cdot F\right)\;\;\textrm{and}\;\;D_i\Phi_j=D_j\Phi_i\;\;\textrm{in}\;\;D'(\O).
\end{align}
This completes the proof of \eqref{hc_0}, which proves that the random vector
\[(D\cdot(A+S)\Phi+\underline{B}\cdot \Phi+D\cdot F,\Phi)\;\;\textrm{is curl-free.}\]
The existence of $\phi$ then follows by integration, and the distributional equality is a consequence of \eqref{hc_5}.  This completes the proof.  \end{proof}

\begin{lem}\label{lem_main}  Assume \eqref{steady}, let $V\in L^1(\O)$, and let $v(x,t) = V(\tau_{x,t}\o)$.  For every $\ve\in(0,1)$ let $w^\ve(t) = \ve^{-1}\int_0^t\underline{b}(\nicefrac{s}{\ve^2})\ds$, let $v^\ve(x,t) = v(\nicefrac{x}{\ve},\nicefrac{t}{\ve^2})$, and let $\tilde{v}^\ve(x,t) = v^\ve(x+w^\ve(t),t)$.  Then, as $\ve\rightarrow 0$, for every $g\in \C_c(\R^d\times[0,\infty))$,
\[\int_0^\infty\int_{\R^d}\tilde{v}^\ve g\rightarrow \E[V]\int_0^\infty\int_{\R^d}g \;\;\textrm{in probability.}\]
\end{lem}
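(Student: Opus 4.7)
The plan is to combine the $\R^{d+1}$-ergodic theorem with the tightness of $\{w^\ve\}$ in $\C([0,T];\R^d)$ implied by \eqref{intro_weak_con}. By linearity, replacing $V$ with $V-\E[V]$ reduces to the case $\E[V]=0$, and the change of variables $y = x + w^\ve(t)$ in the spatial integral (at each fixed $t$) recasts the quantity of interest as
\[
I^\ve := \int_0^T\!\!\int_{\R^d} v^\ve(y,t)\, g_{w^\ve}(y,t)\, dy\, dt, \qquad g_s(y,t):=g(y-s(t),t),
\]
with $T$ chosen so that $\supt g \subset \R^d\times[0,T]$. The goal becomes $I^\ve\to 0$ in probability.

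For each \emph{deterministic} $s\in\C([0,T];\R^d)$ the function $g_s$ is a fixed element of $\C_c(\R^d\times[0,T])$, so the space-time ergodic theorem for the action $\tau_{x,t}$ applied to the mean-zero $L^1$-variable $V$ gives
\[
I^\ve_s := \int v^\ve g_s \longrightarrow \E[V]\int g_s = 0 \quad \textrm{almost surely as $\ve\to 0$.}
\]
Taking a countable intersection, this convergence is simultaneous for any prescribed countable family of shifts.

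To promote this to the random shift $w^\ve$, fix $\kappa>0$ and use tightness to select a compact $K\subset\C([0,T];\R^d)$ with $\P(w^\ve\in K)\geq 1-\kappa$ uniformly in $\ve\in(0,1)$. Total boundedness of $K$ yields, for any $\eta>0$, finitely many paths $s_1,\ldots,s_N\in K$ such that $K\subset\bigcup_{j=1}^N B(s_j,\eta)$ in sup-norm. Set $M:=\sup_{s\in K}\|s\|_\infty$, pick $R$ with $g(y,t)=0$ for $|y|>R$, and let $\omega_g$ denote a modulus of continuity for $g$. On $\{w^\ve\in B(s_j,\eta)\}$, the triangle inequality and $|w^\ve(t)-s_j(t)|<\eta$ give
\[
|I^\ve - I^\ve_{s_j}| \leq \omega_g(\eta)\int_0^T\!\!\int_{B_{R+M}} |v^\ve(y,t)|\, dy\, dt,
\]
and by stationarity $\E\!\left[\int_0^T\!\int_{B_{R+M}} |v^\ve|\right] = T\,|B_{R+M}|\,\E[|V|]$, so this remainder is controlled in probability.

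Assembling, for any $\delta>0$,
\[
\P(|I^\ve|>\delta) \leq \P(w^\ve\notin K) + \sum_{j=1}^N\P(|I^\ve_{s_j}|>\delta/2) + \frac{2\,\omega_g(\eta)\,T\,|B_{R+M}|\,\E[|V|]}{\delta}.
\]
The first term is at most $\kappa$; the second tends to $0$ as $\ve\to 0$ by the second paragraph (only finitely many $s_j$); and the third is at most $\kappa$ once $\eta$ is chosen small. Hence $\limsup_{\ve\to 0}\P(|I^\ve|>\delta)\leq 2\kappa$, and letting $\kappa\to 0$ concludes the proof. The main difficulty is the passage from deterministic to random shifts, handled here by the tightness of $\{w^\ve\}$ from \eqref{intro_weak_con} combined with the uniform $L^1$-bound on $\int|v^\ve|$ furnished by stationarity and $V\in L^1(\O)$; without tightness the finite cover by reference paths $s_j$ is unavailable, and without $L^1$-integrability the error term from shifting $w^\ve \rightsquigarrow s_j$ cannot be absorbed.
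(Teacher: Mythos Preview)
Your proof is correct and takes a genuinely different, more elementary route than the paper's.

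The paper works via the Skorokhod representation theorem: it packages $v^\ve$ (as a normalized random Radon measure on each $\mcC_n$), its $L^1$-norm, and $w^\ve$ into a product metric space, establishes tightness there, passes to an auxiliary probability space where everything converges almost surely, and then characterizes the limit by combining the almost-sure weak-$*$ convergence of the measures with the almost-sure uniform convergence of the paths. Your argument avoids the auxiliary space entirely: you use only the tightness of $\{w^\ve\}$ in $\C([0,T];\R^d)$, extract a finite $\eta$-net $\{s_j\}$ from the compact set $K$, invoke the space--time ergodic theorem for the finitely many deterministic shifts $s_j$, and absorb the discrepancy between the random shift and its nearest reference shift using the modulus of continuity of $g$ together with the stationarity bound $\E\!\left[\int_0^T\!\int_{B_{R+M}}|v^\ve|\right]=T|B_{R+M}|\E[|V|]$.

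Two minor points worth tightening: (i) in the final displayed inequality, the Markov bound on the remainder is cleanest if you first disjointify the cover $\{B(s_j,\eta)\}$ on $K$ (otherwise a naive union bound picks up an unwanted factor $N$); (ii) the claim $I^\ve_s\to 0$ almost surely for fixed $s$ uses that $v^\ve\rightharpoonup\E[V]$ tested against any fixed $C_c$ function, which you obtain from the ergodic theorem for parabolic boxes together with the almost-sure local $L^1$-bound coming from the ergodic theorem applied to $|V|$.

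What each approach buys: yours is shorter, more transparent, and uses nothing beyond tightness plus the ordinary ergodic theorem for deterministic test functions. The paper's Skorokhod construction, while heavier here, is the same machinery reused in the proof of the main homogenization theorem (Theorem~\ref{thm_final}), where one must couple not only $w^\ve$ but also the solutions $\rho^\ve$, their gradients, and various fluxes; in that setting the auxiliary-space argument is harder to replace.
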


\begin{proof}  It follows from the ergodic theorem that, for every $n\in\N$,
\begin{equation}\label{lm_1} v^\ve \rightharpoonup \E[V]\;\;\textrm{weakly in $L^1(\mcC_n)$,}\end{equation}
for $\mcC_n = B_n\times[0,n^2]$, and it follows by stationarity that, for every $\ve\in(0,1)$ and $n\in\N$,
\begin{equation}\label{lm_2}\E\left[\int_{\mcC_n} \abs{v^\ve}\right] = \abs{\mcC_n}\E[\abs{V}].\end{equation}
For every $\ve\in(0,1)$ and $n\in\N$ consider the random variables
\[X^{\ve,n} = \left(\frac{v^\ve \dx\dt }{\norm{v^\ve}_{L^1(\mcC_n)}}\mathbf{1}_{\{v^\ve\neq 0\;\;\textrm{in}\;\;L^1(\mcC_n)\}},\norm{v^\ve}_{L^1(\mcC_n)},w^\ve\right),\]
taking values in the metric space $\overline{X}^n = B_1(\mathcal{M}(\mcC_n))\times \R\times\C([0,n^2];\R^d)$ equipped with the product metric $d_n$ induced by the separable and metrizable topology of weak-* convergence on the unit ball of the space of Radon measures $\mathcal{M}(\mcC_n)$ on $\mcC_n$, and the strong metric topologies of $\R$ and $\C([0,n^2];\R^d)$.  We then define the space
\[\overline{X}=\Prod_{n=1}^\infty \overline{X}^n,\]
equipped with the metric of coordinate-wise convergence $d$ defined by
\[d(x,y) = \sum_{n=1}^\infty 2^{-n}\frac{d_n(x_n,y_n)}{1+d_n(x_n,y_n)}\;\;\textrm{for every $x,y\in \overline{X}$.}\]
For every $\ve\in(0,1)$ we define the $\overline{X}$-valued random variable $X^\ve=(X^{n,\ve})_{n\in\N}$.  Since each of the $(X_n,d_n)$ are separable metric spaces, it follows that $(\overline{X},d)$ is a separable metric space.  It follows from the compactness of the unit ball of $\mathcal{M}(\mcC_n)$ in the weak-* topology, \eqref{lm_2}, assumption \eqref{intro_weak_con}, and Prokhorov's theorem (see, for example, \cite[Chapter~1, Theorem~5.1]{Bil1999}) that the laws of the $X^\ve$ are tight on $\overline{X}$.  Therefore, by Prokhorov's theorem, there exists a probability measure $\mu$ on $\overline{X}$ and a subsequence $\ve_k\rightarrow 0$ such that, as $k\rightarrow\infty$,
\[X^{\ve_k}\rightharpoonup \mu\;\;\textrm{in distribution on $\overline{X}$.}\]
The Skorokhod representation theorem (see, for example, \cite[Chapter~1, Theorem~6.7]{Bil1999}) then proves that there exists a probability space $(S,\mathcal{S},\mathbf{P})$ and $\overline{X}$-valued random variables $Y_k$ and $Y$ on $S$ such that, along a further subsequence $k\rightarrow\infty$, for every $k\in\N$,
\begin{equation}\label{lm_3} Y_k\sim X^{\ve_k}\;\;\textrm{and}\;\; Y\sim \mu\;\;\textrm{in distribution,}\end{equation}
and such that, $\mathbf{P}$-a.e.,
\begin{equation}\label{lm_04}\lim_{k\rightarrow\infty} Y_k = Y\;\;\textrm{in $\overline{X}$.}\end{equation}
We will write
\[Y_k = (Y_{n,k})_{n\in\N}=(m_{n,k},\alpha_{n,k},w_{n,k})_{n\in\N}\;\;\textrm{and}\;\;Y = (Y_n)_{n\in\N} = (m_n,\alpha_n,w_n)_{n\in\N}.\]
Since it follows from \eqref{lm_3} and the definition of $X^{\ve_k}$ that $\mathbf{P}$-a.e., for every $n\leq r\in \N$,
\[\alpha_{n,k}m_{n,k}=\alpha_{r,k}m_{r,k}\;\;\textrm{on}\;\;\mcC_n\;\;\textrm{and}\;\;w_{n,k}=w_{r,k}\;\;\textrm{on}\;\;[0,n^2],\]
and similarly for $Y$, for every $k\in\N$ we can $\mathbf{P}$-a.e.\ write the random vectors
\[(\alpha_{n,k}m_{n,k}, w_{n,k})_{n\in\N} = (M_k,W_k)\;\;\textrm{and}\;\;(\alpha_nm_n,w_n)_{n\in\N}=(M,W),\]
for $\mathbf{P}$-a.e.\ uniquely determined, locally finite Radon measures $M_k,M\in \mathcal{M}_{\textrm{loc}}(\R^d\times[0,\infty))$ and for uniquely determined paths $W_k,W\in \C([0,\infty);\R^d)$.  We will show $\mathbf{P}$-a.e.\ that $M = \E[V]\dx\dt$.  

Let $g\in \C_c(\R^d\times[0,\infty))$ have compact support and fix $n\in\N$ satisfying $\Supp(g)\subseteq\mcC_n$.  We first observe that the weak-* convergence of the $m_{n,k}$ and the strong convergence of the $\alpha_{n,k}$ prove that, $\mathbf{P}$-a.e.\ as $k\rightarrow\infty$,
\begin{equation}\label{lm_4}\alpha_{n,k} m_{n,k}\rightharpoonup \alpha_nm_n\;\;\textrm{weakly-* in $\mathcal{M}(\mcC_n)$.}\end{equation}
Then, since it follows from \eqref{lm_3} that, for every $k\in\N$,
\begin{equation}\label{lm_05}\alpha_{n,k} m_{n,k} = M_k \sim v^{\ve_k}\mathbf{1}_{\{v^{\ve_k}\neq 0\;\textrm{in}\;L^1(\mcC_n)\}}\;\;\textrm{in distribution on $\mathcal{M}(\mcC_n)$,}\end{equation}
we have from \eqref{lm_1}, \eqref{lm_4}, and the compact support of $g$ that, in probability,
\begin{align}\label{lm_5} \lim_{k\rightarrow\infty}\int_0^\infty\int_{\R^d} g M_k(\dx\dt) & =\lim_{k\rightarrow\infty}\int_{\mcC_n} g M_k(\dx\dt)
\\ \nonumber & = \int_{\mcC_n} g\alpha_n m_n(\dx\dt)
\\ \nonumber & =\E[V]\int_{\mcC_n}g,
\end{align}
which proves that $\mathbf{P}$-a.e.\ we have $\alpha_n m_n=\E[V]\dx\dt$.  It then follows by the definition of $M$, \eqref{lm_04}, and \eqref{lm_4} that $\mathbf{P}$-a.e.\ $M= \E[V]\dx\dt$.

In order to conclude, for every $k\in\N$ and $\ve\in(0,1)$, we define in the sense of distributions
\[\tilde{M}_k(x,t) = M_k(x+W_k(t),t),\]
and observe using \eqref{lm_3} that, for every $n\in \N$,
\[\tilde{M}_k\sim \tilde{v}^\ve\mathbf{1}_{\{v^{\ve_k}\neq 0\;\textrm{in}\;L^1(\mcC_n)\}}\dx\dt\;\;\textrm{in distribution on $\mathcal{M}(\mcC_n)$.}\]
Let $g\in \C_c(\R^d\times[0,\infty))$.  First fix $n\in\N$ such that $\Supp(g)\subseteq \mcC_n$, and then let $R_n\in[n,n+1,n+2,\ldots)$ be the smallest random integer larger than $n$ satisfying
\[\sup_{k\in\N}\left(\sup_{t\in[0,n^2]}\abs{w_k(t)}\right)\leq R_n,\]
which by \eqref{lm_04} is $\mathbf{P}$-a.e. finite.  We then observe using the definitions of $n$ and $R_n$ and the compact support of $g$ that, for every $k\in\N$,
\[\int_0^\infty\int_{\R^d} g(x,t)\tilde{M}_k(\dx\dt) = \int_0^\infty\int_{\R^d}g(x-W_k(t),t)M_k(\dx\dt)=\int_{\mcC_{\overline{R}_n}}g(x-W_k(t),t)M_k(\dx\dt),\]
for $\overline{R}_n = n+R_n$.  It follow from the uniform boundedness of the $(\alpha_{\overline{R}_n,k})_{k\in\N}$, which implies by definition the uniform boundedness of the $(M_k)_{k\in\N}$ in the total variation norm on $\mcC_{\overline{R}_n}$, and the uniform continuity of $g$ that $\mathbf{P}$-a.e.\ there exists a random $c\in(0,\infty)$ and a modulus of continuity $\a$ independent of $k$ such that, for every $k\in\N$,
\[\abs{\int_{\mcC_{\overline{R}_n}}\left(g(x-W_k(t),t)-g(x-W(t),t)\right)M_k(\dx\dt)}\leq c\a\left(\sup_{t\in[0,\overline{R}_n^2]}\abs{W_k(t)-W(t)}\right).\]
It then follows from \eqref{lm_04} that, $\mathbf{P}$-a.e.,
\[\limsup_{k\rightarrow\infty}\abs{\int_{\mcC_{\overline{R}_n}}\left(g(x-W_k(t),t)-g(x-W(t),t)\right)M_k(\dx\dt)}=0.\]
We therefore have from \eqref{lm_5}, a change of variables, and the definition of $\overline{R}_n$ that
\begin{align*}
\lim_{k\rightarrow\infty}\int_{\mcC_{\overline{R}_n}}g(x-W_k(t),t)M_k(\dx\dt) & =\lim_{k\rightarrow\infty}\int_{\mcC_{\overline{R}_n}}g(x-W(t),t)M_k(\dx\dt)
\\ & = \E[V]\int_{\mcC_{\overline{R}_n}}g(x-W(t),t)\dx\dt
\\ & = \E[V]\int_0^\infty\int_{\R^d}g(x,t)\dx\dt.
\end{align*}
It then follows from \eqref{lm_05} that, for every $g\in \C_c(\R^d\times[0,\infty))$,
\begin{equation}\label{lm_6}\lim_{k\rightarrow\infty}\int_0^\infty\int_{\R^d}\tilde{v}^{\ve_k}g\dx\dt =  \E[V]\int_0^\infty\int_{\R^d}g(x,t)\dx\dt\;\;\textrm{in probability.}\end{equation}
Since for every subsequence $\ve_k\rightarrow 0$ the above analysis proves that there exists a further subsequence $\ve_k\rightarrow 0$ such that \eqref{lm_6} holds for every $g\in\C_c(\R^d\times[0,\infty))$, we conclude that, for every $g\in \C_c(\R^d\times[0,\infty))$,
\[\lim_{\ve\rightarrow 0}\int_0^\infty\int_{\R^d}\tilde{v}^{\ve}g\dx\dt =  \E[V]\int_0^\infty\int_{\R^d}g(x,t)\dx\dt\;\;\textrm{in probability.}\]
This completes the proof.  \end{proof}

\begin{prop}\label{prop_sublinear}  Assume~\eqref{steady}, let $F\in L^2(\O;\R^d)$, let $\Phi$ and $\phi$ be as constructed in Proposition~\ref{prop_cor_1} corresponding to $F$, and for every $\ve\in(0,1)$ let $w^\ve = \ve^{-1}\int_0^t\underline{b}(\nicefrac{s}{\ve^2})\ds$, let $\phi^\ve(x,t) = \ve\phi(\nicefrac{x}{\ve},\nicefrac{t}{\ve^2})$, and let $\tilde{\phi}^\ve(x,t) = \phi^\ve(x+w^\ve(t),t)$.  Then, for every $R\in(0,\infty)$,
\[\lim_{\ve\rightarrow 0}\int_{\mcC_R}(\tilde{\phi}^\ve-\langle\tilde{\phi}^\ve\rangle_{\mcC_R})^2 = 0\;\;\textrm{in probability,}\]
for $\mathcal{C}_R = B_R\times [0,R^2]$ and for $\langle\tilde{\phi}^\ve\rangle_{\mcC_R}=\fint_{\mcC_R}\tilde{\phi}^\ve$.
\end{prop}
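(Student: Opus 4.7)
The plan is to combine the weak convergence of $\nabla\tilde\phi^\ve$ provided by Lemma~\ref{lem_main} with a parabolic Aubin--Lions compactness argument based on the PDE for the transported corrector, and then to identify the resulting $L^2$-limit via a further application of Lemma~\ref{lem_main}.

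First, Lemma~\ref{lem_main} with $V=|\Phi|^2\in L^1(\O)$ gives $\int_{\mcC_R}|\nabla\tilde\phi^\ve|^2\to\E[|\Phi|^2]|\mcC_R|$ in probability, and applied with each component $V=\Phi_j$ (which has $\E[\Phi_j]=0$ by Proposition~\ref{prop_normalize}) yields the weak convergence $\nabla\tilde\phi^\ve\rightharpoonup 0$ in $L^2(\mcC_R)$ in probability. Transporting the rescaled corrector equation from Proposition~\ref{prop_cor_1}---with $w^\ve$ chosen to cancel the singular drift $\ve^{-1}\underline b^\ve$---yields
\[\partial_t\tilde\phi^\ve=\nabla\cdot\tilde J^\ve,\qquad\tilde J^\ve=(\tilde a^\ve+\tilde s^\ve)\nabla\tilde\phi^\ve+\tilde f^\ve.\]
Since $\tilde a^\ve\in L^\infty$ and $\tilde s^\ve,\nabla\tilde\phi^\ve,\tilde f^\ve$ are all bounded in $L^2(\mcC_R)$ in probability by Lemma~\ref{lem_main}, the flux $\tilde J^\ve$ is bounded in $L^1(\mcC_R)$ in probability.

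Fix $\eta\in\C^\infty_c(B_R)$ with $\int\eta=1$ and let $\bar\phi^\ve_\eta(t):=\int\eta\tilde\phi^\ve\,dx$. From the PDE, $\partial_t\bar\phi^\ve_\eta=-\int\nabla\eta\cdot\tilde J^\ve$, bounded in $L^1([0,R^2])$, so $\bar\phi^\ve_\eta\in BV([0,R^2])$ in probability. The centered quantity $\tilde\phi^\ve-\bar\phi^\ve_\eta(t)$ has zero $\eta$-weighted spatial mean and, via the spatial Poincar\'e--Wirtinger inequality, is bounded in $L^2([0,R^2];H^1(B_R))$ by $C\|\nabla\tilde\phi^\ve\|_{L^2(\mcC_R)}$; its time derivative is bounded in $L^1([0,R^2];W^{-1,1}(B_R))$ by $C\|\tilde J^\ve\|_{L^1(\mcC_R)}$. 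After using the Skorokhod representation theorem to pass to almost-sure convergence along a subsequence, Aubin--Lions (using $H^1(B_R)\hookrightarrow L^2(B_R)$ compactly and $L^2(B_R)\hookrightarrow W^{-1,1}(B_R)$ continuously) yields precompactness of $\tilde\phi^\ve-\bar\phi^\ve_\eta(t)$ in $L^2(\mcC_R)$; any limit has zero spatial gradient and zero $\eta$-weighted spatial mean, so it vanishes.

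A further application of Lemma~\ref{lem_main} with $V=(A+S)\Phi+F\in L^1(\O;\R^d)$ shows that $\tilde J^\ve$ converges in distribution to the constant vector $\E[(A+S)\Phi+F]$; passing to the limit in $\partial_t\bar\phi^\ve_\eta$ and using $\int\nabla\eta=0$, any BV-limit of $\bar\phi^\ve_\eta$ is constant in $t$. Combining the strong convergence $\tilde\phi^\ve-\bar\phi^\ve_\eta(t)\to 0$ with $\bar\phi^\ve_\eta\to u_0$ in $L^2([0,R^2])$ for some constant $u_0$ gives $\tilde\phi^\ve\to u_0$ in $L^2(\mcC_R)$, so $\langle\tilde\phi^\ve\rangle_{\mcC_R}\to u_0$ and $\tilde\phi^\ve-\langle\tilde\phi^\ve\rangle_{\mcC_R}\to 0$ in $L^2(\mcC_R)$ along the subsequence. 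A standard subsequence argument then upgrades this to convergence in probability. The main obstacle is the $L^2$-integrability (and no better) of the stream matrix $S$, which confines the flux to $L^1$ and forces the use of $W^{-1,1}$-valued time derivatives in Aubin--Lions, together with a smoothed spatial average to compensate for the lack of trace control on $\tilde J^\ve$ at $\partial B_R$.
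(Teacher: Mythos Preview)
Your argument is correct and follows essentially the same strategy as the paper: both rewrite the corrector equation in transported variables to kill the singular drift, split off a spatial average, use the flux bound in $L^1$ together with Aubin--Lions--Simon for compactness, and identify the limit via Lemma~\ref{lem_main}. The one technical difference is in how the $L^1$-flux is accommodated. The paper first mollifies $\tilde\phi^\ve$ spatially by a kernel $\kappa^\delta$, so that $\partial_t\tilde\phi^{\ve,\delta}=-\tilde q^\ve*\nabla\kappa^\delta$ lands in $L^1_tL^1_x$; it then runs Aubin--Lions with $L^2_tH^1_x\cap W^{1,1}_tL^1_x$, and removes the extra $\delta$-layer at the end using $\|\tilde\phi^\ve-\tilde\phi^{\ve,\delta}\|_{L^2}\lesssim\delta\|\nabla\tilde\phi^\ve\|_{L^2}$. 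You instead keep $\tilde\phi^\ve$ unmollified, subtract a smooth $\eta$-weighted spatial average, and place $\partial_t$ in $L^1_tW^{-1,1}_x$ directly. Your route is slightly more streamlined (no $\delta\to0$ step), at the cost of invoking Simon's lemma with a negative-order target space; the paper's route stays in Lebesgue spaces throughout. One small imprecision: as written, ``$\bar\phi^\ve_\eta\to u_0$ in $L^2([0,R^2])$'' is not quite justified, since only $\partial_t\bar\phi^\ve_\eta$ is bounded in $L^1$ and $\bar\phi^\ve_\eta$ itself carries no a~priori bound. The fix is immediate---work with $\bar\phi^\ve_\eta-\fint_0^{R^2}\bar\phi^\ve_\eta$, which has zero mean and bounded variation, hence is bounded in $L^\infty([0,R^2])$ by the one-dimensional Poincar\'e inequality (this is exactly what the paper does in \eqref{hc_24}); the uncentered constant then cancels against $\langle\tilde\phi^\ve\rangle_{\mcC_R}$ in the final step.
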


\begin{proof}  We first observe that, almost surely on $\R^{d+1}$,
\[\nabla\phi^\ve(x,t) = \Phi(\tau_{\nicefrac{x}{\ve},\nicefrac{t}{\ve^2}}\o)\;\;\textrm{and}\;\;\partial_t\phi^\ve = \nabla\cdot(a^\ve+s^\ve)\nabla\phi^\ve+\ve^{-1}\underline{b}(\nicefrac{t}{\ve})\cdot\nabla\phi^\ve+\nabla\cdot f^\ve,\]
for $a^\ve(x,t)=a(\nicefrac{x}{\ve},\nicefrac{t}{\ve^2})$ and similarly for $s^\ve$, $\underline{b}^\ve$, and $f^\ve$.  Let $w^\ve(t) = \ve^{-1}\int_0^t\underline{b}(\nicefrac{s}{\ve^2})\ds$ and observe that the function
\[\tilde{\phi}^\ve(x,t) = \phi(x+w^\ve(t),t)\;\;\textrm{on}\;\;\R^d\times[0,\infty),\]
is a distributional solution of the equation
\begin{equation}\label{hc_9}\partial_t\tilde{\phi}^\ve = \nabla\cdot(\tilde{a}^\ve+\tilde{s}^\ve)\nabla\tilde{\phi}^\ve+\nabla\cdot \tilde{f}^\ve\;\;\textrm{in}\;\;\R^d\times[0,\infty),\end{equation}
for $\tilde{a}^\ve(x,t) = a^\ve(x+w^\ve(t),t)$ and similarly for $\tilde{s}^\ve$ and $\tilde{f}^\ve$.  Finally, let $q^\ve$ denote the flux $q^\ve = (a^\ve+s^\ve)\nabla\phi^\ve+f^\ve$ and let $\tilde{q}^\ve(x,t) = q^\ve(x+w^\ve(t),t)$.  For every $R\in(0,\infty)$ and locally integrable $f\colon\R^d\times[0,\infty)\rightarrow\R$ let $\langle f \rangle_{\mcC_R}=\fint_{\mcC_R}f$ and, for every $R\in(0,\infty)$ and $t\in[0,R^2]$, we define the average on the time slice of $\mcC_R$ by $\langle f \rangle_{R,t} = \fint_{B_R}f(\cdot,t)$.  We will also consider a spatial smoothing of $\tilde{\phi}^\ve$:  for every $\d\in(0,1)$ let $\kappa^\d\colon\R^d\rightarrow\R$ be a standard symmetric spatial convolution kernel of scale $\d$ on $\R^d$ satisfying $\norm{\nabla\kappa^\d}\leq \nicefrac{c}{\d}$ for some $c\in(0,\infty)$ independent of $\d$, and for every $\ve\in(0,1)$ and $\delta\in(0,1)$ let $\tilde{\phi}^{\ve,\d}(x,t) = (\tilde{\phi}^\ve(\cdot,t)*\kappa^\d)(x)$.

Fix $R\in(0,\infty)$ and observe using the triangle inequality that, for every $\ve,\d\in(0,1)$,
\begin{align}\label{hc_20}
\int_{\mcC_R} (\tilde{\phi}^\ve-\langle\tilde{\phi}^\ve\rangle_{\mcC_R})^2 & \leq 2\int_{\mcC_R} ((\tilde{\phi}^\ve-\langle\tilde{\phi}^\ve\rangle_{\mcC_R})-(\tilde{\phi}^{\ve,\d}-\langle\tilde{\phi}^{\ve,\d}\rangle_{\mcC_R}))^2
\\ \nonumber & \quad +4\int_{\mcC_R} (\tilde{\phi}^{\ve,\d}-\langle\tilde{\phi}^{\ve,\d}\rangle_{R,t})^2+4\int_{\mcC_R} (\langle\tilde{\phi}^{\ve,\d}\rangle_{R,t}-\langle\tilde{\phi}^{\ve,\d}\rangle_{\mcC_R})^2.
\end{align}
For the first term on the righthand side of \eqref{hc_20}, it follows from $\delta\in(0,1)$, the definition of $\kappa^\d$, the definition of the convolution, and Jensen's inequality that
\[\int_{\mcC_R} ((\tilde{\phi}^\ve-\langle\tilde{\phi}^\ve\rangle_{\mcC_R})-(\tilde{\phi}^{\ve,\d}-\langle\tilde{\phi}^{\ve,\d}\rangle_{\mcC_R}))^2\leq \delta^2\int_0^\infty\int_{\R^d}\chi_{R}\abs{\nabla\tilde{\phi}^\ve}^2,\]
for any arbitrary smooth function $\chi_{R}\colon\R^d\times[0,\infty)\rightarrow [0,1]$ satisfying $\chi_{R}=1$ on $\mcC_{R+1}$ and $\chi_{R}=0$ on $(\R^d\times[0,\infty))\setminus\mcC_{R+2}$.

For the second term on the righthand side of \eqref{hc_20}, it follows from $\d\in(0,1)$, the definition of $\kappa^\d$, the definition of the convolution, and the Poincar\'e inequality applied to each individual time-slice of $\mcC_R$ that there exists $c\in(0,\infty)$ depending on $R$ such that
\begin{equation}\label{hc_22}\norm{(\tilde{\phi}^{\ve,\d}-\langle\tilde{\phi}^{\ve,\d}\rangle_{R,t})}^2_{L^2([0,R^2];H^1(B_R))}\leq c\int_0^\infty\int_{\R^d}\chi_{R}\abs{\nabla\tilde{\phi}^{\ve,\d}}^2.\end{equation}
We also have from \eqref{hc_9} that distributionally
\begin{equation}\label{hc_023}\partial_t(\tilde{\phi}^{\ve,\d}-\langle\tilde{\phi}^{\ve,\d}\rangle_{R,t})(x,t) = -\int_{\R^d}\tilde{q}^\ve(y,t)\cdot\nabla\kappa^\d(x-y)\dy,\end{equation}
and therefore, we have using the definition of $\kappa^\d$ above and $\delta\in(0,1)$ that, for some $c\in(0,\infty)$ independent of $\d\in(0,1)$,
\begin{equation}\label{hc_23}\norm{\partial_t(\tilde{\phi}^{\ve,\d}-\langle\tilde{\phi}^{\ve,\d}\rangle_{R,t})}_{L^1([0,R^2];L^1(B_R))}\leq c\delta^{-1}\int_0^\infty\int_{\R^d}\chi_{R}\abs{\tilde{q}^\ve}.\end{equation}

For the final term on the righthand side of \eqref{hc_20}, we first observe using \eqref{hc_9} that distributionally
\[\partial_t(\langle\tilde{\phi}^{\ve,\d}\rangle_{R,t}-\langle\tilde{\phi}^{\ve,\d}\rangle_{\mcC_R}) = \partial_t\langle\tilde{\phi}^{\ve,\d}\rangle_{R,t} = -\int_{B_R}\int_{\R^d}\tilde{q}^\ve(y,t)\cdot \nabla \kappa^\delta(y-x)\dy\dx,\]
and, therefore, for some $c\in(0,\infty)$ independent of $\d\in(0,1)$,
\[\abs{\partial_t(\langle\tilde{\phi}^{\ve,\d}\rangle_{R,t}-\langle\tilde{\phi}^{\ve,\d}\rangle_{\mcC_R})}\leq c\delta^{-1}\int_{\R^d}\chi_{R}(\cdot,t)\abs{\tilde{q}^\ve(\cdot,t)}.\]
We then observe that, by the fundamental theorem of calculus or one-dimensional Poincar\'e inequality, for every $s\leq t\in[0,R^2]$, for some $c\in(0,\infty)$ independent of $\delta$,
\begin{align}\label{hc_24}
& \norm{\partial_t(\langle\tilde{\phi}^{\ve,\d}\rangle_{R,t}-\langle\tilde{\phi}^{\ve,\d}\rangle_{\mcC_R})}_{L^1([0,R^2])}+\norm{(\langle\tilde{\phi}^{\ve,\d}\rangle_{R,t}-\langle\tilde{\phi}^{\ve,\d}\rangle_{\mcC_R})}_{L^\infty([0,R^2])}
\\ \nonumber & \leq c\delta^{-1}\int_0^\infty\int_{\R^d}\chi_{R}\abs{\tilde{q}^\ve}.
\end{align}

Let $\{\ve_k\}_{k\in\N}$ be an arbitrary sequence satisfying $\ve_k\rightarrow 0$ as $k\rightarrow\infty$.  Then by Lemma~\ref{lem_main} and $\chi_{R}\in\C_c(\R^d\times[0,\infty))$ there exists a further subsequence still denoted $\ve_k\rightarrow 0$ as $k\rightarrow\infty$ such that, $\P$-a.e.,
\begin{equation}\label{hc_50}\lim_{k\rightarrow\infty}= \int_0^\infty\int_{\R^d}\chi_{R}\abs{\tilde{q}^{\ve_k}} = \E[(A+S)\Phi+F]\int_{\R^d}\chi_{R},\end{equation}
and
\begin{equation}\label{hc_51}\lim_{k\rightarrow\infty}\int_0^\infty\int_{\R^d}\chi_{R}\abs{\nabla\tilde{\phi}^{\ve,\d}}^2 = E[\abs{\Phi}^2] \int_{\R^d}\chi_{R}.\end{equation}
From \eqref{hc_22}, \eqref{hc_23}, \eqref{hc_50}, and \eqref{hc_51} it follows $\P$-a.e.\ that the functions
\[(\tilde{\phi}^{\ve_k,\d}-\langle\tilde{\phi}^{\ve_k,\d}\rangle_{R,t})_{k\in\N}\;\;\textrm{are uniformly bounded in}\;\;L^2([0,R^2];H^1(B_R))\cap W^{1,1}([0,R^2];L^1(B_R)),\]
from which it follows using the compact embedding of $H^1(B_R)$ into $L^2(B_R)$ and the continuous embedding of $L^2(B_R)$ into $L^1(B_R)$, and the Aubin-Lions-Simon lemma \cite{Aub1963,Lio1969,Sim1987} that the functions
\[(\tilde{\phi}^{\ve_k,\d}-\langle\tilde{\phi}^{\ve_k,\d}\rangle_{R,t})_{k\in\N}\;\;\textrm{are relatively strongly compact in}\;\;L^2([0,R^2];L^2(B_R)).\]
Similarly, it follows $\P$-a.e.\ from \eqref{hc_24} that the functions
\[(\langle\tilde{\phi}^{\ve_k,\d}\rangle_{R,t}-\langle\tilde{\phi}^{\ve_k,\d}\rangle_{\mcC_R})_{k\in\N}\;\;\textrm{are uniformly bounded in}\;\;L^\infty([0,R^2])\cap W^{1,1}([0,R^2]),\]
from which it follows from the compact embedding of $W^{1,1}([0,R^2])$ into $L^1([0,R^2])$ and the $L^\infty$-boundedness that the functions
\[(\langle\tilde{\phi}^{\ve_k,\d}\rangle_{R,t}-\langle\tilde{\phi}^{\ve_k,\d}\rangle_{\mcC_R})_{k\in\N}\;\;\textrm{are relatively compact in}\;\;L^2([0,R^2]).\]
To characterize these strong limits, we now observe using Lemma~\ref{lem_main} applied component-wise and the symmetry of $\kappa^\d$ that, in probability, for every $\d\in(0,1)$ and $g\in\C_c(\R^d\times[0,\infty))$,
\begin{equation}\label{hc_52}\lim_{k\rightarrow\infty}\int_0^\infty\int_{\R^d}\nabla \tilde{\phi}^{\ve_k,\d}(x)\cdot g = \E[\Phi]\int_0^\infty\int_{\R^d}(g*\kappa^\d) = 0,\end{equation}
and it follows from Lemma~\ref{lem_main}, \eqref{hc_023}, and the symmetry of $\kappa^\d$ that, for every $\d\in(0,1)$ and $g\in\C_c(\R^d\times[0,\infty))$,
\begin{equation}\label{hc_53}\lim_{k\rightarrow\infty}\int_0^\infty\int_{\R^d}\partial_t\tilde{\phi}^{\ve_k,\d}(x)\cdot g = \E[(A+S)\Phi+F]\cdot \int_0^\infty\int_{\R^d}\nabla (g*\kappa^\d) = 0.\end{equation}
In combination, \eqref{hc_52} and \eqref{hc_53} prove that any potential strong limit of the functions $(\tilde{\phi}^{\ve,\d}-\langle\tilde{\phi}^{\ve,\d}\rangle_{R,t})_{k\in\N}$ in $L^2(\mcC_R)$ has vanishing distributional gradient and derivative in time, and must therefore be constant.  However, since each of the $(\tilde{\phi}^{\ve,\d}-\langle\tilde{\phi}^{\ve,\d}\rangle_{R,t})_{k\in\N}$ has average zero, this means that every potential strong limit is zero.   The same reasoning applies to the functions $(\langle\tilde{\phi}^{\ve,\d}\rangle_{R,t}-\langle\tilde{\phi}^{\ve,\d}\rangle_{\mcC_R})_{k\in\N}$.  We therefore have $\P$-a.e.\ that, along the subsequence,
\begin{equation}\label{hc_31}\lim_{k\rightarrow\infty} (\tilde{\phi}^{\ve_k,\d}-\langle\tilde{\phi}^{\ve_k,\d}\rangle_{R,t}))=\lim_{k\rightarrow\infty}(\langle\tilde{\phi}^{\ve_k,\d}\rangle_{R,t}-\langle\tilde{\phi}^{\ve_k,\d}\rangle_{\mcC_R})=0\;\;\textrm{strongly in}\;\;L^2(\mcC_R).\end{equation}
Returning to \eqref{hc_20}, we have from \eqref{hc_51} and \eqref{hc_31} that, along a further subsequence,
\[\limsup_{k\rightarrow\infty}\int_{\mcC_R} (\tilde{\phi}^{\ve_k}-\langle\tilde{\phi}^{\ve_k}\rangle_{\mcC_R})^2\leq \E[\abs{\Phi}^2]\delta^2,\]
which, after taking $\delta\rightarrow 0$, proves $\P$-a.e.\ that, along the subsequence,
\begin{equation}\label{hc_55}\lim_{k\rightarrow\infty}\int_{\mcC_R} (\tilde{\phi}^{\ve_k}-\langle\tilde{\phi}^{\ve_k}\rangle_{\mcC_R})^2=0.\end{equation}
Since we have shown that, for any subsequence $\ve_k\rightarrow 0$ as $k\rightarrow\infty$ there exits a further subsequence $\ve_k\rightarrow 0$ as $k\rightarrow\infty$ satisfying \eqref{hc_55}, it follows that, for every $R\in(0,\infty)$,
\[\lim_{\ve\rightarrow 0}\int_{\mcC_R} (\tilde{\phi}^\ve-\langle\tilde{\phi}^\ve\rangle_{\mcC_R})^2=0\;\;\textrm{in probability,}\]
which completes the proof.  \end{proof}

\begin{prop}\label{prop_unique}  Assume~\eqref{steady}.  Then for every $F\in L^2(\O;\R^d)$ the gradient $\Phi\in L^2_{\textrm{pot}}(\O)$ constructed in Proposition~\ref{prop_cor_1} satisfies the energy equality
\begin{equation}\label{energy}\E[A\Phi\cdot \Phi ] = -\E[F\cdot \Phi].\end{equation}
In particular, this implies that $\Phi$ is unique.  \end{prop}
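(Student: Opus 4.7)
The plan is to establish the energy equality \eqref{energy} by testing the equation \eqref{hc_9} satisfied by the transported corrector $\tilde{\phi}^\ve$ against a compactly supported cutoff of $\tilde{\phi}^\ve-\langle\tilde{\phi}^\ve\rangle_{\mcC_R}$ and then passing to the limit $\ve\to 0$ by combining Lemma~\ref{lem_main} with the sublinearity estimate of Proposition~\ref{prop_sublinear}. Fix $R\in(0,\infty)$ and a smooth cutoff $\chi\in\C_c^\infty(\R^d\times(0,\infty))$ with support in the interior of $\mcC_R$, and set $c^\ve=\langle\tilde{\phi}^\ve\rangle_{\mcC_R}$. Multiplying \eqref{hc_9} by $\chi^2(\tilde{\phi}^\ve-c^\ve)$, integrating over $\R^d\times[0,\infty)$, and using the skew-symmetry of $\tilde{s}^\ve$ to annihilate the quadratic stream-matrix contribution, integration by parts in space and time yields the identity
\begin{align*}
\int \chi^2\,\tilde{a}^\ve\nabla\tilde{\phi}^\ve\cdot\nabla\tilde{\phi}^\ve+\int \chi^2\,\nabla\tilde{\phi}^\ve\cdot\tilde{f}^\ve &= \tfrac{1}{2}\int \partial_t\chi^2\,(\tilde{\phi}^\ve-c^\ve)^2 \\
&\quad -2\int \chi(\tilde{\phi}^\ve-c^\ve)\nabla\chi\cdot(\tilde{a}^\ve+\tilde{s}^\ve)\nabla\tilde{\phi}^\ve \\
&\quad -2\int \chi(\tilde{\phi}^\ve-c^\ve)\nabla\chi\cdot\tilde{f}^\ve.
\end{align*}

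I would then pass to the limit term by term. Since $A\Phi\cdot\Phi$ and $\Phi\cdot F$ lie in $L^1(\O)$ and the left-hand-side integrands $\tilde{a}^\ve\nabla\tilde{\phi}^\ve\cdot\nabla\tilde{\phi}^\ve$ and $\nabla\tilde{\phi}^\ve\cdot\tilde{f}^\ve$ are transports of their stationary evaluations at $\tau_{\cdot/\ve,\cdot/\ve^2}\o$, Lemma~\ref{lem_main} gives
\[\int \chi^2\,\tilde{a}^\ve\nabla\tilde{\phi}^\ve\cdot\nabla\tilde{\phi}^\ve\to\E[A\Phi\cdot\Phi]\int\chi^2\;\;\textrm{and}\;\;\int \chi^2\,\nabla\tilde{\phi}^\ve\cdot\tilde{f}^\ve\to\E[\Phi\cdot F]\int\chi^2\]
in probability as $\ve\to 0$. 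On the right-hand side the first term is dominated by $\|\partial_t\chi^2\|_\infty\int_{\mcC_R}(\tilde{\phi}^\ve-c^\ve)^2$, which vanishes in probability by Proposition~\ref{prop_sublinear}. The two cross terms are controlled by Cauchy-Schwarz as products of $\|\tilde{\phi}^\ve-c^\ve\|_{L^2(\mcC_R)}$ (vanishing in probability by Proposition~\ref{prop_sublinear}) with $\|(\tilde{a}^\ve+\tilde{s}^\ve)\nabla\tilde{\phi}^\ve\|_{L^2(\mcC_R)}$ and $\|\tilde{f}^\ve\|_{L^2(\mcC_R)}$, both of which are bounded in probability by another application of Lemma~\ref{lem_main} to the $L^1(\O)$-integrable quantities $|(A+S)\Phi|^2$ and $|F|^2$. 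Passing to the limit therefore produces $\bigl(\E[A\Phi\cdot\Phi]+\E[F\cdot\Phi]\bigr)\int\chi^2=0$, and since $\chi$ may be chosen with $\int\chi^2>0$, the energy identity \eqref{energy} follows.

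Uniqueness is then an immediate consequence, because the above argument applies to any gradient in $L^2_{\textrm{pot}}(\O)$ solving \eqref{hc_0}. If $\Phi_1,\Phi_2\in L^2_{\textrm{pot}}(\O)$ are two such solutions for the same $F$, then by linearity $\Phi=\Phi_1-\Phi_2\in L^2_{\textrm{pot}}(\O)$ satisfies \eqref{hc_0} with forcing replaced by $0$, so the energy identity specializes to $\E[A\Phi\cdot\Phi]=0$; the uniform ellipticity \eqref{intro_elliptic} then forces $\Phi=0$ in $L^2(\O;\R^d)$. The principal subtlety of the argument is the simultaneous handling of the two distinct in-probability convergences supplied by Lemma~\ref{lem_main} and Proposition~\ref{prop_sublinear} within the nonlinear cross terms, which is resolved by the standard fact that convergence in probability is stable under Cauchy-Schwarz when one factor tends to zero in probability and the other is bounded in probability.
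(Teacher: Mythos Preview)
Your overall strategy---testing the transported equation against a cutoff of $\tilde{\phi}^\ve-c^\ve$ and passing to the limit via Lemma~\ref{lem_main} and Proposition~\ref{prop_sublinear}---matches the paper's, but there is a genuine integrability gap in your treatment of the stream-matrix cross term.  You control
\[
\int \chi(\tilde{\phi}^\ve-c^\ve)\,\nabla\chi\cdot(\tilde{a}^\ve+\tilde{s}^\ve)\nabla\tilde{\phi}^\ve
\]
by Cauchy--Schwarz, claiming that $\|(\tilde{a}^\ve+\tilde{s}^\ve)\nabla\tilde{\phi}^\ve\|_{L^2(\mcC_R)}$ is bounded in probability because $|(A+S)\Phi|^2\in L^1(\O)$.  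This last assertion is false in general: under \eqref{steady} we only have $S\in L^2(\O;\R^{d\times d})$ and $\Phi\in L^2(\O;\R^d)$, so $S\Phi\in L^1(\O;\R^d)$ by Cauchy--Schwarz but \emph{not} $L^2$, and hence $|S\Phi|^2$ need not be integrable.  Lemma~\ref{lem_main} therefore does not apply to $|(A+S)\Phi|^2$, and you have no bound-in-probability for the $L^2$-norm of the flux.  No alternative H\"older splitting rescues the estimate either, since $(\tilde{\phi}^\ve-c^\ve)$ is controlled only in $L^2$, not $L^\infty$.

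This low-integrability obstruction is exactly what forces the paper's more elaborate argument.  The paper first truncates the corrector via $\psi_K(\tilde{\phi}^\ve-c^\ve)$, which makes the test function bounded and allows the stream-matrix cross term to be written, after an integration by parts using $\nabla\Psi_K(\tilde{\phi}^\ve-c^\ve)=\psi_K(\tilde{\phi}^\ve-c^\ve)\nabla\tilde{\phi}^\ve$, as $\int \tilde{s}^\ve\nabla\tilde{\phi}^\ve\,\psi_K(\tilde{\phi}^\ve-c^\ve)\cdot\nabla\chi$.  A second decomposition on the level sets $B_{M,\ve}=\{|\nabla\tilde{\phi}^\ve|\geq M\}$ then splits this: on $B_{M,\ve}^c$ the gradient is bounded by $M$ so the pairing $\tilde{s}^\ve\cdot\psi_K(\cdot)$ is between two genuine $L^2$ quantities; on $B_{M,\ve}$ one uses $|\psi_K|\leq K$ and the stationary tail $\E[|\Phi|^2\mathbf{1}_{\{|\Phi|\geq M\}}]$, which vanishes as $M\to\infty$.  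The truncation also introduces an error on $A_{K,\ve}=\{|\tilde{\phi}^\ve-c^\ve|\geq K\}$ in the main energy term, handled by the same gradient-level decomposition.  Your argument would go through verbatim if $S$ were bounded (and indeed matches the simpler time-independent case), but for $S\in L^2$ you need this double truncation.
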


\begin{proof}  Let $\phi$ be $\P$-a.e.\ defined as the unique $L^2_{\textrm{loc}}(\R;H^1_{\textrm{loc}}(\R^d))$ function satisfying
\[\nabla\phi(x,t) = \Phi(\tau_{x,t}\o)\;\;\textrm{and}\;\;\int_{\mcC_1}\phi = 0,\]
and distributionally that
\[\partial_t\phi = \nabla\cdot(a+s)\nabla\phi+\underline{b}\cdot\nabla\Phi+\nabla\cdot f.\]
For every $\ve\in(0,1)$ let $w^\ve(t) = \ve^{-1}\int_0^t\underline{b}(\nicefrac{s}{\ve^2})\ds$, let $\phi^\ve(x,t) = \ve\phi(\nicefrac{x}{\ve},\nicefrac{t}{\ve^2})$, and let $\tilde{\phi}^\ve = \phi(x+w^\ve(t),t)$.  It then holds distributionally that
\[\partial_t\tilde{\phi}^\ve = \nabla\cdot(\tilde{a}^\ve+\tilde{s}^\ve)\nabla\tilde{\phi}^\ve+\nabla\cdot\tilde{f}^\ve.\]
Since it follows by an approximation argument using the density of smooth functions in the space $L^2_{\textrm{loc}}([0,T];H^1_{\textrm{loc}}(\R^d))$ that, for every smooth function $g\in\C^\infty_c(\R^{d+1})$,
\[\int_{\R^{d+1}} \tilde{s}^\ve\nabla\tilde{\phi}^\ve\cdot \nabla g = \int_{\R^d}(\tilde{b}^\ve-\underline{b}^\ve)\nabla\tilde{\phi}^\ve g,\]
we have that $\tilde{\phi}^\ve$ $\P$-a.e.\ distributionally solves
\begin{equation}\label{hc_70}\partial_t\tilde{\phi}^\ve = \nabla\cdot\tilde{a}^\ve\nabla\tilde{\phi}^\ve+\ve^{-1}(\tilde{b}^\ve-\underline{b}^\ve)\nabla\tilde{\phi}^\ve+\nabla\cdot\tilde{f}^\ve.\end{equation}
Let $\chi\in\C^\infty_c(\R^d\times(0,\infty))$ be an arbitrary smooth function satisfying $\int_0^\infty\int_{\R^d}\chi = 1$ and $\Supp(\chi)\subseteq\mcC_2$, for every $K\in\N$ let $\psi_K\colon\R\rightarrow [-K,K]$ be defined by $\psi_K(x) = K$ if $x\geq K$, $\psi_K(x)=-K$ if $x\leq -K$, and  $\psi_K(x)=x$ if $\abs{x}\leq K$, and for every $K\in\N$ let $\Psi_K\colon\R\rightarrow\R$ be the unique function satisfying $\Psi_K(0)=0$ and $\Psi_K'(x)=\psi_K(x)$.

After first introducing a spatial convolution of $\tilde{\phi}^\ve$, it follows by an approximation argument that, for every $\ve, \d\in(0,1)$ and $K\in\N$, the function $\chi(x,t)\psi_K(\tilde{\phi}^\ve(x,t)-\langle\tilde{\phi}^\ve\rangle_{\mcC_2})$ is an admissible test function for \eqref{hc_70}.  Then, using the properties of $\chi$, we have for every $\ve\in(0,1)$ and $K\in\N$ that
\begin{align}\label{hc_90} -\int_{\mcC_2}\Psi_K(\tilde{\phi}^\ve(x,t)-\langle\tilde{\phi}^\ve\rangle_{\mcC_2})\partial_t\chi & =-\int_{\mcC_2}\psi'_K(\tilde{\phi}^\ve(x,t)-\langle\tilde{\phi}^\ve\rangle_{\mcC_2})\left(\tilde{a}^\ve\nabla\tilde{\phi}^\ve\cdot\nabla\tilde{\phi}^\ve+\tilde{f}^\ve\cdot\nabla\tilde{\phi}^\ve\right)\chi
\\ \nonumber & \quad +\int_{\mcC_2}\ve^{-1}(\tilde{b}^\ve-\underline{b}^\ve)\cdot\nabla\tilde{\phi}^\ve\psi_K(\tilde{\phi}^\ve(x,t)-\langle\tilde{\phi}^\ve\rangle_{\mcC_2})\chi.
\end{align}
For the final term of \eqref{hc_90}, we have using the distributional equality
\[\nabla\tilde{\phi}^\ve\psi_K(\tilde{\phi}^\ve(x,t)-\langle\tilde{\phi}^\ve\rangle_{\mcC_2}) = \nabla\Psi_K(\tilde{\phi}^\ve(x,t)-\langle\tilde{\phi}^\ve\rangle_{\mcC_2}),\]
that, after repeating the approximation argument above,
\begin{equation}\label{hc_91}\int_{\mcC_2}\ve^{-1}(\tilde{b}^\ve-\underline{b}^\ve)\cdot\nabla\tilde{\phi}^\ve\psi_K(\tilde{\phi}^\ve(x,t)-\langle\tilde{\phi}^\ve\rangle_{\mcC_2})\chi = \int_{\mcC_2}\tilde{s}^\ve\nabla\tilde{\phi}^\ve\psi_K(\tilde{\phi}^\ve(x,t)-\langle\tilde{\phi}^\ve\rangle_{\mcC_2})\cdot \nabla\chi.\end{equation}
Our aim is to apply Lemma~\ref{lem_main}, but the corrector itself is not stationary.  We therefore consider the set
\[A_{K,\ve} = \mcC_2\cap \{(x,t)\colon(\tilde{\phi}^\ve(x,t)-\langle\tilde{\phi}^\ve\rangle_{\mcC_2})\geq K\},\]
and observe by Chebyshev's inequality that in measure
\[\abs{A_{K,\ve}}\leq \frac{1}{K^2}\int_{\mcC_2} (\tilde{\phi}^\ve(x,t)-\langle\tilde{\phi}^\ve\rangle_{\mcC_2})^2.\]
For the first term on the righthand side of \eqref{hc_90} we have that
\begin{align}\label{hc_92} \int_{\mcC_2}\psi'_K(\tilde{\phi}^\ve(x,t)-\langle\tilde{\phi}^\ve\rangle_{\mcC_2})\left(\tilde{a}^\ve\nabla\tilde{\phi}^\ve\cdot\nabla\tilde{\phi}^\ve+\tilde{f}^\ve\cdot\nabla\tilde{\phi}^\ve\right)\chi & = \int_{\mcC_2}\left(\tilde{a}^\ve\nabla\tilde{\phi}^\ve\cdot\nabla\tilde{\phi}^\ve+\tilde{f}^\ve\cdot\nabla\tilde{\phi}^\ve\right)\chi
\\ \nonumber & \quad -\int_{A_{K,\ve}}\left(\tilde{a}^\ve\nabla\tilde{\phi}^\ve\cdot\nabla\tilde{\phi}^\ve+\tilde{f}^\ve\cdot\nabla\tilde{\phi}^\ve\right)\chi.
\end{align}
For the first term on the righthand side of \eqref{hc_92}, it follows from Lemma~\ref{lem_main} and the definition of $\chi$ that
\begin{equation}\label{hc_093}\lim_{\ve\rightarrow 0}\int_{\mcC_2}\left(\tilde{a}^\ve\nabla\tilde{\phi}^\ve\cdot\nabla\tilde{\phi}^\ve+\tilde{f}^\ve\cdot\nabla\tilde{\phi}^\ve\right)\chi = \E\left[A\Phi\cdot\Phi+F\cdot\Phi\right]\;\;\textrm{in probability.}\end{equation}
For the second term, we perform a second decomposition based on the size of $\nabla\tilde{\phi}^\ve$.  For every $M\in\N$ let $B_{M,\ve}$ be the set
\[B_{M,\ve} = \mcC_2\cap\left\{(x,t)\colon \abs{\nabla\tilde{\phi}^\ve(x,t)}\geq M\right\},\]
for which we have using the boundedness of $A$ and $F$ that, for some $c\in(0,\infty)$ independent of $\ve$, $K$, and $M$,
\begin{align*}
 &  \abs{\int_{A_{K,\ve}}\left(\tilde{a}^\ve\nabla\tilde{\phi}^\ve\cdot\nabla\tilde{\phi}^\ve+\tilde{f}^\ve\cdot\nabla\tilde{\phi}^\ve\right)\chi}
\\ & \leq \frac{cM^2}{K^2}\int_{\mcC_2} (\tilde{\phi}^\ve(x,t)-\langle\tilde{\phi}^\ve\rangle_{\mcC_2})^2 +\abs{\int_{\mcC_2}\left(\tilde{a}^\ve\nabla\tilde{\phi}^\ve\cdot\nabla\tilde{\phi}^\ve+\tilde{f}^\ve\cdot\nabla\tilde{\phi}^\ve\right)\mathbf{1}_{B_{M,\ve}}\chi}
\\ & \leq \frac{cM^2}{K^2}\int_{\mcC_2} (\tilde{\phi}^\ve(x,t)-\langle\tilde{\phi}^\ve\rangle_{\mcC_2})^2 +c\abs{\int_{\mcC_2}\abs{\nabla\tilde{\phi}^\ve}^2\mathbf{1}_{B_{M,\ve}}\chi}.
\end{align*}
It remains to treat the righthand side of \eqref{hc_91}.  It follows from the definition of $\chi$ and H\"older's inequality that, for some $c\in(0,\infty)$ depending on $\chi$,
\begin{align}\label{hc_94}
& \abs{\int_{\mcC_2}\tilde{s}^\ve\nabla\tilde{\phi}^\ve\psi_K(\tilde{\phi}^\ve(x,t)-\langle\tilde{\phi}^\ve\rangle_{\mcC_2})\cdot \nabla\chi}
\\ \nonumber & \leq \left(\int_{\mcC_2}\abs{\tilde{s}^\ve}^2\abs{\nabla\chi}\right)^\frac{1}{2}\left(\int_{\mcC_2}\abs{\nabla\tilde{\phi}^\ve}^2\psi_K(\tilde{\phi}^\ve(x,t)-\langle\tilde{\phi}^\ve\rangle_{\mcC_2})^2\abs{\nabla\chi}\right)^\frac{1}{2}
\\ \nonumber & \leq M\left(\int_{\mcC_2}\abs{\tilde{s}^\ve}^2\abs{\nabla\chi}\right)^\frac{1}{2}\left(\int_{\mcC_2}\psi_K(\tilde{\phi}^\ve(x,t)-\langle\tilde{\phi}^\ve\rangle_{\mcC_2})^2\abs{\nabla\chi}\right)^\frac{1}{2}
\\ \nonumber & \quad +K\left(\int_{\mcC_2}\abs{\tilde{s}^\ve}\abs{\nabla\chi}^2\right)^\frac{1}{2}\left(\int_{\mcC_2}\abs{\nabla\tilde{\phi}^\ve}^2\mathbf{1}_{B_{M,\ve}}\abs{\nabla\chi}\right)^\frac{1}{2}.
\end{align}
By the stationarity of the gradient, it follows that that the indicator functions of the sets $B_{M,\ve}$ are stationary and therefore by Lemma~\ref{lem_main} and Proposition~\ref{prop_sublinear} we have that, for every $K,M\in\N$, for some $c\in(0,\infty)$ depending on $\chi$,
\[\lim_{\ve\rightarrow 0}\left(\frac{cM^2}{K^2}\int_{\mcC_2} (\tilde{\phi}^\ve(x,t)-\langle\tilde{\phi}^\ve\rangle_{\mcC_2})^2 +c\abs{\int_{\mcC_2}\abs{\nabla\tilde{\phi}^\ve}^2\mathbf{1}_{B_{M,\ve}}}\right) = c\E\left[\abs{\Phi}^2\mathbf{1}_{\{\abs{\Phi}\geq M\}}\right]\;\;\textrm{in probability.}\]
Similarly, for the righthand side of \eqref{hc_94}, we have from Lemma~\ref{lem_main} and Proposition~\ref{prop_sublinear} that
\[\lim_{\ve\rightarrow 0}\left(M\left(\int_{\mcC_2}\abs{\tilde{s}^\ve}^2\abs{\nabla\chi}\right)^\frac{1}{2}\left(\int_{\mcC_2}\psi_K(\tilde{\phi}^\ve(x,t)-\langle\tilde{\phi}^\ve\rangle_{\mcC_2})^2\abs{\nabla\chi}\right)^\frac{1}{2}\right) = 0\;\;\textrm{in probability,}\]
and it follows from Lemma~\ref{lem_main} and Proposition~\ref{prop_sublinear} that, for some $c\in(0,\infty)$ depending on $\chi$,
\[\lim_{\ve\rightarrow 0}K\left(\int_{\mcC_2}\abs{\tilde{s}^\ve}\abs{\nabla\chi}^2\right)^\frac{1}{2}\left(\int_{\mcC_2}\abs{\nabla\tilde{\phi}^\ve}^2\mathbf{1}_{B_{M,\ve}}\abs{\nabla\chi}\right)^\frac{1}{2} = cK\E[\abs{S}^2]^\frac{1}{2}\E\left[\abs{\Phi}^2\mathbf{1}_{\{\abs{\Phi}\geq M\}}\right],\]
in probability.  After passing $M\rightarrow\infty$, it follows from the dominated convergence theorem that
\begin{equation}\label{hc_95} \lim_{\ve\rightarrow 0}\abs{\int_{A_{K,\ve}}\left(\tilde{a}^\ve\nabla\tilde{\phi}^\ve\cdot\nabla\tilde{\phi}^\ve+\tilde{f}^\ve\cdot\nabla\tilde{\phi}^\ve\right)\chi} = 0\;\;\textrm{in probability,}\end{equation}
and
\begin{equation}\label{hc_96}\lim_{\ve\rightarrow 0}\abs{\int_{\mcC_2}\tilde{s}^\ve\nabla\tilde{\phi}^\ve\psi_K(\tilde{\phi}^\ve(x,t)-\langle\tilde{\phi}^\ve\rangle_{\mcC_2})\cdot \nabla\chi} = 0\;\;\textrm{in probability.}\end{equation}
Finally, for the lefthand side of \eqref{hc_90}, it follows from Proposition~\ref{prop_sublinear} and the definition of $\Psi_K$ that, for some $c\in(0,\infty)$ depending on $\chi$,
\begin{equation}\label{hc_97} \limsup_{\ve\rightarrow 0}\abs{\int_{\mcC_2}\Psi_K(\tilde{\phi}^\ve(x,t)-\langle\tilde{\phi}^\ve\rangle_{\mcC_2})\partial_t\chi}\leq c\limsup_{\ve\rightarrow 0}\int_{\mcC_2}(\tilde{\phi}^\ve(x,t)-\langle\tilde{\phi}^\ve\rangle_{\mcC_2})^2 = 0\;\;\textrm{in probability.}\end{equation}
In combination \eqref{hc_093}, \eqref{hc_95}, \eqref{hc_96}, and \eqref{hc_97} prove the energy equality
\[\E\left[A\Phi\cdot\Phi+F\cdot\Phi\right] =0,\]
which completes the proof of \eqref{energy}.  The proof of uniqueness is then a consequence of the energy equality and the uniformly ellipticity, since any two solutions $\Phi_1$ and $\Phi_2$ will satisfy
\[\lambda\E[\abs{\Phi_1-\Phi_2}^2]\leq \E[A\Phi\cdot\Phi]=0,\]
which completes the proof.  \end{proof}

\subsection{The correctors and transpose correctors}\label{sec_transpose}

In this section, we define for once the homogenization correctors and briefly collect some information about the transpose correctors that will be used in the proof of homogenization below.  These are the correctors corresponding to the transpose of the diffusion matrix, and formally solve the equation, for every $i\in\{1,\ldots,d\}$,
\[-\partial_t\phi^t_i = \nabla\cdot(a^t-s)\nabla\phi^t_i+\underline{b}\cdot \nabla\phi^i_t,\]
where time has been reversed and the diffusive part of the equation has been transposed, but the noise entering the equation remains the same.  Observe, in particular, that the sign of the time derivative and the relationship between $S$ and $\underline{B}$ played no role at all in the proofs of Propositions~\ref{prop_cor_1}, \ref{prop_sublinear}, and \ref{prop_unique} above.

\begin{prop}\label{prop_tcorrector} Assume~\eqref{steady}.  Then for every $F\in L^2(\O;R^d)$ there exists a unique $\Phi^t\in L^2_{\textrm{pot}}(\O)$ that satisfies the distributional equalities
\[-D_0\Phi^t_i = D_i\left( D\cdot (A^t-S)\Phi^t+\underline{B}\cdot \Phi^t + D\cdot F\right)\;\;\textrm{on $\O$ for every}\;\;i\in\{1,\ldots,d\},\]
which $\P$-a.e.\ implies the existence of a unique $\phi^t\in L^2_{\textrm{loc}}(\R;H^1_{\textrm{loc}}(\R^d))$ that satisfies
\[\nabla\phi^t(x,t) = \Phi^t(\tau_{x,t}\o)\;\;\textrm{and}\;\;\fint_{\mathcal{C}_1}\phi^t = 0,\]
and that satisfies distributionally
\[-\partial_t\phi^t= \nabla\cdot(a^t-s)\nabla\phi^t+\underline{b}\cdot\nabla\phi^t+\nabla\cdot f,\]
for $f(x,t)=F(\tau_{x,t}\o)$.  Furthermore, for every $\ve\in(0,1)$, for $w^\ve(t) = \ve^{-1}\int_0^t\underline{b}(\nicefrac{s}{\ve^2})\ds$, for $\phi^{t,\ve}(x,t)=\phi^t(\nicefrac{x}{\ve},\nicefrac{t}{\ve^2})$, and for $\tilde{\phi}^{t,\ve}(x,t)=\phi^{t,\ve}(x+w^\ve(t),t)$, we have that, for every $R\in(0,\infty)$,
\[\lim_{\ve\rightarrow 0}\int_{\mcC_R}\ve^2(\tilde{\phi}^{t,\ve}-\langle\tilde{\phi}^{t,\ve}\rangle_{\mcC_R})^2 = 0\;\;\textrm{in probability,}\]
for $\mathcal{C}_R = B_R\times [0,R^2]$ and for $\langle\tilde{\phi}^{t,\ve}\rangle_{\mcC_R}=\fint_{\mcC_R}\tilde{\phi}^{t,\ve}$.  \end{prop}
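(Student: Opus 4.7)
The strategy is to mimic Propositions~\ref{prop_cor_1}, \ref{prop_sublinear}, and \ref{prop_unique} essentially line-by-line, exploiting the author's observation just prior to the statement that neither the sign of $D_0$ nor the specific relation $D\cdot S = B-\underline{B}$ enters those proofs. What truly drives the arguments is the skew-symmetry of $S$ (preserved under $S\mapsto -S$) and the spatial invariance of $\underline{B}$ (unchanged under transposition), so all the key cancellations transfer to the transpose setting unchanged.

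For the existence of $\Phi^t$, I would set up the regularized Lax--Milgram problem
\[
\beta\phi - D_0\phi = \beta D_0(D_0\phi) + D\cdot(A^t-S_n)D\phi + \underline{B}_n\cdot D\phi + D\cdot F
\]
with the same bounded truncations $S_n$ and $\underline{B}_n$ used in the proof of Proposition~\ref{prop_cor_1}. The three cancellations---$\E[(-S_n)D\phi\cdot D\phi]=0$ by skew-symmetry, $\E[\underline{B}_n\cdot D\phi\cdot\phi]=0$ by spatial invariance, and $\E[D_0\phi\cdot\phi]=0$ by stationarity (independently of the sign in front of $D_0$)---reproduce the a priori estimate \eqref{hc_2}. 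Passing $\beta\to 0$ and $n\to\infty$ along a subsequence and commuting horizontal derivatives via $D_iD_0=D_0D_i$ yields $\Phi^t\in L^2_{\textrm{pot}}(\O)$ satisfying the stated distributional identity; integration produces $\phi^t$ with the normalization $\fint_{\mcC_1}\phi^t=0$.

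For the sublinearity, the key preliminary observation is that $\psi^\ve:=\ve\phi^{t,\ve}$ has stationary gradient $\nabla\psi^\ve(x,t)=\Phi^t(\tau_{\nicefrac{x}{\ve},\nicefrac{t}{\ve^2}}\o)$ and hence falls within the scaling convention of $\phi^\ve$ in Proposition~\ref{prop_sublinear}; the stated conclusion is equivalent to $\int_{\mcC_R}(\tilde{\psi}^\ve-\langle\tilde{\psi}^\ve\rangle_{\mcC_R})^2\to 0$ in probability. A direct computation in which the chain-rule contribution $-\dot{w}^\ve\cdot\nabla\tilde{\psi}^\ve$ cancels the singular drift $\ve^{-1}\underline{b}^\ve\cdot\nabla\psi^\ve$ shows that $\tilde{\psi}^\ve$ solves a backwards transported equation with no singular forcing---the analogue of \eqref{hc_9} with time reversed and $a+s$ replaced by $a^t-s$. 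The Aubin--Lions--Simon compactness argument of Proposition~\ref{prop_sublinear}---built from the time-slice Poincar\'e inequality and a $W^{1,1}$-in-time bound on a spatially convolved corrector---is indifferent to the sign of $\partial_t$, and Lemma~\ref{lem_main} identifies the relevant weak limits of $\nabla\tilde{\psi}^\ve$ and of the flux identically, so the sublinearity of $\tilde{\psi}^\ve$ follows.

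Uniqueness is obtained from the transpose energy equality $\E[A^t\Phi^t\cdot\Phi^t]=-\E[F\cdot\Phi^t]$, proved by the test-function argument of Proposition~\ref{prop_unique} applied to the backwards analogue of \eqref{hc_70}: pair against $\chi(x,t)\psi_K(\tilde{\psi}^\ve-\langle\tilde{\psi}^\ve\rangle_{\mcC_2})$, absorb the singular drift via the identity $\nabla\tilde{\psi}^\ve\psi_K=\nabla\Psi_K$ and integration by parts against $-\tilde{s}^\ve$ (the sign change being the only modification), decompose on the sets $A_{K,\ve}$ and $B_{M,\ve}$, and pass to the limit using Lemma~\ref{lem_main} and the sublinearity just established. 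Uniform ellipticity of $A^t$ then forces uniqueness. The only real subtlety is careful sign bookkeeping---confirming that time reversal and $S\mapsto -S$ preserve every cancellation in the original arguments---which I expect to be the main potential source of error, though no new technique is required beyond what already appears in Propositions~\ref{prop_cor_1}--\ref{prop_unique}.
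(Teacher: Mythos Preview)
Your proposal is correct and takes essentially the same approach as the paper: the paper's own proof is the single sentence ``The proof is identical to the proofs of Propositions~\ref{prop_cor_1}, \ref{prop_sublinear}, and \ref{prop_unique},'' and your write-up is precisely a careful unpacking of that sentence, with the correct observation that the sign of $D_0$, the replacement $A+S\mapsto A^t-S$, and the spatial invariance of $\underline{B}$ leave all the relevant cancellations intact.
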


\begin{proof}  The proof is identical to the proofs of Propositions~\ref{prop_cor_1}, \ref{prop_sublinear}, and \ref{prop_unique}.  \end{proof}

\begin{remark}  Observe that the functions $\phi^\ve$ are defined differently in Proposition~\ref{prop_tcorrector} than they were in Proposition~\ref{prop_sublinear}.  In the notation of Proposition~\ref{prop_tcorrector}, in order to simplify the notation in the argument of Proposition~\ref{prop_sublinear} we made the choice to work directly with $\ve\phi^\ve$ as opposed to $\phi^\ve$ itself.  For the remainder of the paper we will take the convention of Proposition~\ref{prop_tcorrector} and Definition~\ref{def_correctors} below.  \end{remark}

\begin{definition}\label{def_correctors}  For every $i\in\{1,\ldots,d\}$ let $\Phi_i, \Phi_i^t\in L^2_{\textrm{pot}}(\O)$ denote the unique random gradients corresponding to $F = (A+S)e_i$ and $F=(A^t-S)e_i$ constructed in Proposition~\ref{prop_cor_1} and Proposition~\ref{prop_tcorrector} respectively.  Let $\phi_i$ and $\phi_i^t$ be $\P$-a.e.\ defined as the unique $L^2_{\textrm{loc}}(\R;H^1_{\textrm{loc}}(\R^d))$ functions satisfying
\[\nabla\phi(x,t) = \Phi_i(\tau_{x,t})\;\;\textrm{and}\;\;\nabla\phi_i^t(x,t) = \Phi^t_i(\tau_{x,t}\o)\;\;\textrm{and}\;\;\fint_{\mcC_1}\phi_i = \fint_{\mcC_1}\phi_i^t = 0,\]
and satisfying distributionally
\[\partial_t\phi_i = \nabla\cdot(a+s)(\nabla\phi_i+e_i)+\underline{b}\cdot\nabla\phi_i\;\;\textrm{and}\;\; -\partial_t\phi^t_i = \nabla\cdot(a^t-s)(\nabla\phi^t_i+e_i)+\underline{b}\cdot\nabla\phi^t_i.\]
Finally, for every $\xi\in\R^d$ define $\Phi_\xi = \sum_{i=1}^d\xi_i\Phi_i$ and $\Phi^t_\xi = \sum_{i=1}^d\xi_i\Phi^t_i$ and let $\phi_\xi$ and $\phi^t_\xi$ be defined as above for $e_i=\xi$. \end{definition}

\subsection{The homogenized coefficient}\label{sec_hom}  In this section, we use the homogenization correctors to characterize the effective coefficient $\overline{a}$.  The characterization of $\overline{a}^t$ is important for the proof of homogenization, where in the perturbed test function method it is necessary to use the transpose homogenization correctors.

\begin{prop}\label{prop_hom_mat}  Assume~\eqref{steady} and let the correctors be as in Definition~\ref{def_correctors}.  Let $\overline{a},\overline{m}\in\R^{d\times d}$ be defined by
\[\overline{a}\xi = \E\left[(A+S)(\Phi_\xi+\xi)\right]\;\;\textrm{and}\;\;\overline{m} = \E\left[(A^t-S)(\Phi^t_\xi+\xi)\right].\]
Then $\overline{a}$ and $\overline{m}$ are uniformly elliptic in the sense that both satisfy, for every $\xi\in\R^d$,
\[\lambda\abs{\xi}^2\leq \langle a\xi,\xi\rangle\;\;\textrm{and}\;\;\abs{\overline{a}\xi}\leq \abs{\xi}\left(\Lambda+\E[\abs{S}^2]^\frac{1}{2}\right)\left(1+\left(\sum_{i=1}^d\E[\abs{\Phi_i}^2]\right)^\frac{1}{2}\right).\]
Furthermore, we have that
\[\overline{a}=\overline{m}^t.\]
\end{prop}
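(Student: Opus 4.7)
The plan is to treat the three assertions of Proposition~\ref{prop_hom_mat} separately: the lower ellipticity bound, the upper bound, and the adjoint identity $\overline{a} = \overline{m}^t$. The main obstacle will be the last, which requires a bilinear version of the energy identity from Proposition~\ref{prop_unique}.

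For the lower bound I would apply the energy equality of Proposition~\ref{prop_unique} with $F = (A+S)\xi$, which yields $\Phi = \Phi_\xi$ and
\[\E[A\Phi_\xi\cdot\Phi_\xi] = -\E[(A+S)\xi\cdot\Phi_\xi].\]
Expanding the quantity $\E[(A+S)(\Phi_\xi+\xi)\cdot\Phi_\xi]$ and using the skew-symmetry of $S$ to drop $\E[S\Phi_\xi\cdot\Phi_\xi]$, this energy equality gives $\E[(A+S)(\Phi_\xi+\xi)\cdot\Phi_\xi] = 0$. Consequently
\[\langle\overline{a}\xi,\xi\rangle = \E[(A+S)(\Phi_\xi+\xi)\cdot(\Phi_\xi+\xi)] = \E[A(\Phi_\xi+\xi)\cdot(\Phi_\xi+\xi)]\geq \lambda\E[\abs{\Phi_\xi+\xi}^2]\geq \lambda\abs{\xi}^2,\]
where the last inequality uses the fact that every element of $L^2_{\textrm{pot}}(\O)$ has mean zero, so that $\E[\abs{\Phi_\xi+\xi}^2]=\E[\abs{\Phi_\xi}^2]+\abs{\xi}^2$.

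For the upper bound I would estimate directly:
\[\abs{\overline{a}\xi}\leq \E[\abs{A(\Phi_\xi+\xi)}]+\E[\abs{S(\Phi_\xi+\xi)}]\leq (\L+\E[\abs{S}^2]^{\nicefrac{1}{2}})\bigl(\E[\abs{\Phi_\xi}^2]^{\nicefrac{1}{2}}+\abs{\xi}\bigr),\]
using the uniform bound $\abs{A}\leq\L$, Cauchy-Schwarz for the $S$ term, and $\E[\Phi_\xi]=0$. Writing $\Phi_\xi = \sum_{i=1}^d\xi_i\Phi_i$ and applying Cauchy-Schwarz in $i$ then yields $\E[\abs{\Phi_\xi}^2]^{\nicefrac{1}{2}}\leq \abs{\xi}(\sum_i\E[\abs{\Phi_i}^2])^{\nicefrac{1}{2}}$, which combines with the previous estimate to give the stated bound.

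For the adjoint identity, the key reduction is the pair of cross-orthogonalities
\begin{equation}\label{plan_cross}
\E[(A+S)(\Phi_\xi+\xi)\cdot\Phi^t_\eta]=0 \and \E[(A^t-S)(\Phi^t_\eta+\eta)\cdot\Phi_\xi]=0.
\end{equation}
Granting \eqref{plan_cross}, I would write $\langle\overline{a}\xi,\eta\rangle = \E[(A+S)(\Phi_\xi+\xi)\cdot(\Phi^t_\eta+\eta)]$ and $\langle\overline{m}\eta,\xi\rangle = \E[(A^t-S)(\Phi^t_\eta+\eta)\cdot(\Phi_\xi+\xi)]$, and these coincide because $(A+S)^t = A^t-S$. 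The hard part is establishing \eqref{plan_cross}. I would proceed by repeating the truncation-and-cutoff scheme used in the proof of Proposition~\ref{prop_unique}: fix a nonnegative test function $\chi\in\C^\infty_c(\R^d\times(0,\infty))$ supported in $\mcC_2$ with unit integral, introduce the truncation $\psi_K$ of $\tilde{\phi}^{t,\ve}_\eta-\langle\tilde{\phi}^{t,\ve}_\eta\rangle_{\mcC_2}$ and the level set $B_{M,\ve}=\{\abs{\nabla\tilde{\phi}^\ve_\xi}\geq M\}$, and test the transported equation for $\tilde{\phi}^\ve_\xi$ against $\chi\psi_K(\tilde{\phi}^{t,\ve}_\eta-\langle\tilde{\phi}^{t,\ve}_\eta\rangle_{\mcC_2})$. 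The opposite signs of the time derivatives in the corrector and transpose-corrector equations (see Proposition~\ref{prop_tcorrector}) allow the $\ve^{-1}$-singular transport term to be absorbed into a skew-symmetric integration-by-parts exactly as in \eqref{hc_91}, so that the limits $\ve\to 0$, $M\to\infty$, $K\to\infty$ can be taken using Lemma~\ref{lem_main} and the sublinearity of both correctors (Proposition~\ref{prop_sublinear} and its analogue in Proposition~\ref{prop_tcorrector}). The low integrability of $S$ and $\underline{B}$ makes this step delicate: the cutoff parameters $K,M,\delta$ must be balanced so that each error term (the set $A_{K,\ve}$, the $S$-term with $K\abs{S}$, and the time derivative) vanishes in probability in the correct order, and this is where the bulk of the work lies.
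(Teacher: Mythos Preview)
Your treatment of the lower and upper bounds is essentially the paper's argument and is fine.

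The gap is in the adjoint identity. The pair of cross-orthogonalities \eqref{plan_cross} that you want to prove---each individually equal to zero---are \emph{not} true in the time-dependent setting, and your proposed testing scheme cannot deliver them. When you test the transported equation for $\tilde{\phi}^\ve_\xi$ against $\chi\psi_K(\tilde{\phi}^{t,\ve}_\eta-\langle\tilde{\phi}^{t,\ve}_\eta\rangle_{\mcC_2})$, the time-derivative term is $\int\partial_t\tilde{\phi}^\ve_\xi\cdot\chi\psi_K(\tilde{\phi}^{t,\ve}_\eta-\langle\cdot\rangle)$, which is \emph{not} a total time-derivative (unlike the case where you test against $\psi_K$ of the same function). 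Integration by parts in time produces a term involving $\partial_t\tilde{\phi}^{t,\ve}_\eta$, which via the transpose-corrector equation brings in the second flux $(\tilde{a}^{t,\ve}-\tilde{s}^\ve)(\nabla\tilde{\phi}^{t,\ve}_\eta+\eta)$. What you recover in the limit is therefore only the \emph{equality}
\[
\E\bigl[(A+S)(\Phi_\xi+\xi)\cdot\Phi^t_\eta\bigr]=\E\bigl[(A^t-S)(\Phi^t_\eta+\eta)\cdot\Phi_\xi\bigr],
\]
not the vanishing of each side. Indeed, in the time-dependent case the flux $(A+S)(\Phi_\xi+\xi)$ is not divergence-free on $\O$ (its divergence equals $D_0\phi_\xi$), so there is no reason for it to be $L^2$-orthogonal to an arbitrary potential field such as $\Phi^t_\eta$.

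The good news is that the equality alone suffices: writing $\langle\overline{a}\xi,\eta\rangle-\langle\overline{m}\eta,\xi\rangle$ and using $(A+S)^t=A^t-S$ on the full bilinear term $\E[(A+S)(\Phi_\xi+\xi)\cdot(\Phi^t_\eta+\eta)]$, the difference reduces exactly to the difference of the two cross-terms, which is zero by the equality above. This is how the paper proceeds, and it obtains the equality cleanly by writing the equation for the \emph{difference} $\tilde{\phi}^\ve_i-\tilde{\phi}^{t,\ve}_j$ (here the opposite signs of the time derivatives make both fluxes appear with the same sign on the right) and then repeating the energy-identity argument of Proposition~\ref{prop_unique} for this difference. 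The time term then \emph{is} a total derivative and drops out, and one reads off the combined identity directly.
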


\begin{proof}   It follows from Proposition~\ref{prop_unique}, the energy inequality \eqref{energy}, the uniform ellipticity \eqref{intro_elliptic}, an approximation argument first approximating $S$ by bounded, skew-symmetric matrices as in Proposition~\ref{prop_cor_1} and using the fact that the homogenized matrix is stable with respect to strong convergence of the coefficients and weak convergence of the gradients, Jensen's inequality, and $\E[\Phi_\xi]=0$ that
\[\langle \overline{a}\xi,\xi\rangle = \E\left[(A+S)(\Phi_\xi+\xi)\cdot\xi\right] = \E\left[(A+S)(\Phi_\xi+\xi)\cdot(\Phi_\xi+\xi)\right]\geq \lambda \E\left[\abs{\Phi_\xi+\xi}^2\right]\geq \lambda\abs{\xi}^2.\]
For the second inequality, it follows from H\"older's inequality that
\begin{align*}
\abs{\overline{a}\xi}=\abs{\E[(A+S)(\Phi_\xi+\xi)]} & \leq \E[(\Lambda+\abs{S})(\abs{\Phi_\xi}+\abs{\xi})]
\\ & \leq \abs{\xi}\left(\Lambda+\E[\abs{S}^2]^\frac{1}{2}\right)\left(1+\left(\sum_{i=1}^d\E[\abs{\Phi_i}^2]\right)^\frac{1}{2}\right).
\end{align*}
For the final equality, again working first on an approximate level using the fact that the homogenized matrices are stable with respect to strong convergence of the coefficients and weak convergence of the gradients, we observe $\P$-a.e.\ that, for every $i,j\in\{1,\ldots,d\}$,
\[\partial_t(\tilde{\phi}^\ve_i-\tilde{\phi}^{t,\ve}_j) = \nabla\cdot(\tilde{a}^\ve+\tilde{s}^\ve)(\nabla\tilde{\phi}^\ve_i+e_i)+\nabla\cdot(\tilde{a}^{t,\ve}-\tilde{s}^\ve)(\nabla\tilde{\phi}^{t,\ve}_j+e_j).\]
A repetition of the proof of Proposition~\ref{prop_sublinear} then proves that, on the approximate level, for every $i,j\in\{1,\ldots,d\}$,
\[\E[(A+S)(\Phi_i+e_i)(\Phi_i-\Phi^t_j) ]+ \E[(A^t-S)(\Phi^j_t+e_j)(\Phi_i-\Phi^t_j)]=0.\]
Since it follows from the energy equality \eqref{energy} that
\[\E[A(\Phi_i+e_i)\Phi_i] = \E[A^t(\Phi^t_j+e_j)\Phi^t_j],\]
we conclude that
\[\E[(A+S)(\Phi_i+e_i)\Phi^t_j] = \E[(A^t-S)(\Phi^j_t+e_j)\Phi_i].\]
Therefore, for every $i,j\in\{1,\ldots,d\}$,
\begin{align*}
\overline{a}_{ij} & = \E[(A+S)(\Phi_i+e_i)e_j]
\\ & = \E[(A+S)(\Phi_i+e_i)(\Phi^t_j+e_j)]-\E[(A+S)(\Phi_i+e_i)\Phi^t_j]
\\ & = \E\left[(A^t-S)(\Phi^t_j+e_j)(\Phi_i+e_i)\right]-\E[(A^t-S)(\Phi^j_t+e_j)\Phi_i]
\\ & = \E\left[(A^t-S)(\Phi^t_j+e_j)e_i\right] = \overline{m}_{ji},
\end{align*}
which completes the proof.
\end{proof}

\section{The proof of homogenization}\label{sec_homogenize}  In this section, we prove the homogenization of \eqref{intro_neq} in law.  We first recall some well-known facts about the solution theories of \eqref{intro_neq} and \eqref{intro_spde} in Section~\ref{sec_wp} and we prove the homogenization in Section~\ref{sec_main_hom} using a variation of the perturbed test function method.

\subsection{Well-posedness of the \eqref{intro_neq} and \eqref{intro_spde}}\label{sec_wp}  In this section, we collect the standard well-posedness results for \eqref{intro_neq} and \eqref{intro_spde}.

\begin{prop}\label{prop_well_posed}  Assume~\eqref{steady} and let $T\in(0,\infty)$, let $\ve\in(0,1)$, let $f\in(L^2\cap \C)(\R^d\times(0,T])$, and let $g\in(L^2\cap\C)(\R^d)$.  Then there exists a unique solution $\rho^\ve\in L^2([0,T];H^1(\R^d))\cap\C([0,T];L^2(\R^d))$ of the equation
\[\partial_t\rho^\ve = \nabla \cdot (a^\ve+s^\ve) \nabla\rho^\ve+\ve^{-1}\underline{b}^\ve\cdot \nabla\rho^\ve \;\;\textrm{in}\;\;\R^d\times (0,T)\;\;\textrm{with}\;\;\rho^\ve(\cdot,0)=\rho_0.\]
Furthermore, the solution $\rho^\ve$ satisfies the estimates
\[\norm{\rho^\ve}_{L^\infty(\R^d\times[0,T])}\leq \norm{g}_{L^\infty(\R^d)}+T\norm{f}_{L^\infty(\R^d\times[0,T])},\]
and, for $c\in(0,\infty)$ depending on the ellipticity constants and $T$,
\[\max_{t\in[0,T]}\norm{\rho^\ve}^2_{L^2(\R^d)}+\int_0^T\int_{\R^d}\abs{\nabla\rho^\ve}^2 \leq c\left(\norm{g}^2_{L^2(\R^d)}+\norm{f}^2_{L^2(\R^d\times[0,T])}\right).\]
\end{prop}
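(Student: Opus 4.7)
\smallskip

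\textbf{Proof proposal.} The plan is to reduce the problem to a standard divergence-form parabolic equation by removing the singular drift $\ve^{-1}\underline{b}^\ve$ through a time-dependent translation in space. Since $\underline{b}^\ve(t)$ depends only on $t$, setting $\tilde{\rho}^\ve(x,t) = \rho^\ve(x - w^\ve(t),t)$ with $w^\ve(t) = \ve^{-1}\int_0^t\underline{b}(\nicefrac{s}{\ve^2})\ds$ (with the sign chosen so that the chain rule produces $-\dot{w}^\ve\cdot\nabla$ that exactly cancels the drift) converts the equation to
\[\partial_t\tilde{\rho}^\ve = \nabla\cdot(\tilde{a}^\ve+\tilde{s}^\ve)\nabla\tilde{\rho}^\ve+\tilde{f}\;\;\textrm{in}\;\;\R^d\times(0,T)\;\;\textrm{with}\;\;\tilde{\rho}^\ve(\cdot,0)=g,\]
where $\tilde{a}^\ve, \tilde{s}^\ve, \tilde{f}$ denote the translated coefficients and forcing. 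The matrix $\tilde{a}^\ve$ inherits the uniform ellipticity of $a^\ve$ and $\tilde{s}^\ve$ remains skew-symmetric, though stationarity is destroyed. Since the spatial translation is an isometry on every time slice, the claimed $L^2$ and $L^\infty$ estimates are equivalent to the corresponding estimates for $\tilde{\rho}^\ve$.

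For existence and uniqueness of $\tilde{\rho}^\ve$, I would proceed by approximating $S$ by the bounded skew-symmetric matrices $S_n = ((n\wedge S_{ij})\vee -n)$ used in Proposition~\ref{prop_cor_1}, producing coefficients $\tilde{s}^\ve_n$ converging strongly to $\tilde{s}^\ve$ in $L^2_{\textrm{loc}}(\R^{d+1})$, $\P$-a.e. For each $n$, the equation with $\tilde{s}^\ve_n$ in place of $\tilde{s}^\ve$ is a standard uniformly parabolic equation with bounded measurable coefficients, and Galerkin approximation (or Lions' theorem) yields a unique weak solution $\tilde{\rho}^\ve_n\in L^2([0,T];H^1(\R^d))\cap\C([0,T];L^2(\R^d))$. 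Testing against $\tilde{\rho}^\ve_n$, the skew-symmetry of $\tilde{s}^\ve_n$ gives $\int \tilde{s}^\ve_n\nabla\tilde{\rho}^\ve_n\cdot\nabla\tilde{\rho}^\ve_n = 0$, so uniform ellipticity, Young's inequality, and Gronwall produce the claimed $L^2$ estimate uniformly in $n$. Weak compactness in $L^2([0,T];H^1(\R^d))$, strong $L^2_{\textrm{loc}}$ compactness via Aubin-Lions (using the uniform bound on $\partial_t\tilde{\rho}^\ve_n$ in $L^2([0,T];H^{-1}(\R^d))$), together with the strong convergence $\tilde{s}^\ve_n\to\tilde{s}^\ve$, suffice to pass to the limit in the weak formulation and exhibit $\tilde{\rho}^\ve$. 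Uniqueness follows by taking the difference of two solutions, testing against it, and again invoking the skew-symmetric cancellation---now distributional but still valid since the product $\tilde{s}^\ve\nabla u\cdot\nabla u$ lies in $L^1$ and the identity $\int\tilde{s}^\ve\nabla u\cdot\nabla u=0$ persists under the $L^2(\tilde{s}^\ve_n)\to L^2(\tilde{s}^\ve)$ approximation.

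For the $L^\infty$ bound, I would apply Stampacchia truncation: set $M(t)=\norm{g}_{L^\infty}+t\norm{f}_{L^\infty(\R^d\times[0,T])}$, test the equation for $\tilde{\rho}^\ve-M$ against $(\tilde{\rho}^\ve-M)_+$ (approximated to ensure admissibility), and observe that the diffusion term contributes a nonnegative quantity by ellipticity, the skew-symmetric term again vanishes (since $\tilde{s}^\ve\nabla u\cdot\nabla u=\frac{1}{2}\tilde{s}^\ve\nabla|u|^2$ in distribution and the truncation $(\cdot-M)_+$ preserves the $H^1$ structure), while the time derivative of $M(t)$ dominates the forcing by construction. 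The Gronwall-type argument yields $(\tilde{\rho}^\ve-M)_+\equiv 0$, and the same reasoning applied to $-\tilde{\rho}^\ve$ gives the matching lower bound. Transforming back to $\rho^\ve$ preserves both $L^\infty$ and $L^2$ norms on each time slice.

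The main obstacle is justifying the skew-symmetric cancellation $\int\tilde{s}^\ve\nabla u\cdot\nabla u=0$ when $\tilde{s}^\ve$ is merely $L^2$, because the integrand is only $L^1$ and the natural distributional reformulation $\frac{1}{2}\nabla\cdot(\tilde{s}^\ve\cdot)|u|^2$ requires that $\nabla\cdot\tilde{s}^\ve = \tilde{b}^\ve-\tilde{\underline{b}}^\ve\in L^2$ (from \eqref{intro_smat}) and that $|u|^2$ has enough regularity to pair against it. Handling this carefully via the approximation by bounded $S_n$, where the cancellation is classical, and passing to the limit using the strong $L^2$-convergence of both $\tilde{s}^\ve_n$ and $\nabla\cdot\tilde{s}^\ve_n$, is where the genuine work lies; the rest is standard.
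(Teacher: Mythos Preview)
Your argument is correct, but it takes an unnecessary detour compared with the paper. The paper works directly with the original equation: existence by Galerkin approximation, the $L^\infty$-bound by observing that $s_\pm(t)=\norm{g}_{L^\infty}\pm t\norm{f}_{L^\infty}$ are sub- and supersolutions (which is exactly your Stampacchia truncation phrased as a comparison principle), and the energy estimate by testing against $\rho^\ve\chi_R$ and sending $R\to\infty$. The transformation $\tilde{\rho}^\ve(x,t)=\rho^\ve(x-w^\ve(t),t)$ is not needed here, because the singular drift already disappears in the energy identity: $\underline{b}^\ve$ is constant in $x$, so $\int_{\R^d}\ve^{-1}\underline{b}^\ve\cdot\nabla\rho^\ve\,\rho^\ve=\tfrac{1}{2}\ve^{-1}\underline{b}^\ve\cdot\int_{\R^d}\nabla(\rho^\ve)^2=0$, and likewise it leaves the $L^\infty$ barriers $s_\pm$ untouched. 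What your route buys is a clearer reduction to a textbook situation; what it costs is an extra change of variables that the estimates never actually required.

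Where your write-up is more careful than the paper's sketch is in handling the merely $L^2$-integrable stream matrix via the bounded approximations $S_n$. The paper hides this entirely inside the words ``Galerkin approximation'' and a reference to Evans, but your concern is legitimate: the term $\int s^\ve\nabla\rho^\ve\cdot\nabla\psi$ is only in $L^1$, and justifying the skew-symmetric cancellation for test functions that are not compactly supported (or for the truncations $(\rho^\ve-M)_+$) does require an approximation step of the kind you describe. So on that point your argument is the more honest one.
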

\begin{proof}  The proof of existence follows from a Galerkin approximation, the $L^\infty$-estimate is a consequence of the comparison principle using the fact that the functions $s_{\pm}(t) = \norm{g}_{L^\infty(\R^d)}\pm \norm{f}_{L^\infty(\R^d\times[0,T])}$ are respectively a subsolution and a supersolution of the equation, and the final estimate is the usual energy inequality obtained by testing the equation against $\rho^\ve \chi_R$ for a smooth cutoff $\chi_R$ and then passing to the limit $R\rightarrow \infty$ using the $L^2$-integrability and $L^\infty$-boundedness of the solution and applying Gr\"onwall's inequality.  For full details see Evans \cite[Chapter~7]{Eva2010}.  \end{proof}

\begin{definition}  Assume~\eqref{steady}, let $(\F_t)_{t\in[0,\infty)}$ be a filtration on $(\O,\F)$, let $B$ be a standard $\F_t$-adapted, $d$-dimensional Brownian motion on $(\O,\F)$, let $T\in(0,\infty)$, let $f\in(L^2\cap \C)(\R^d\times(0,T])$, and let $g\in(L^2\cap\C)(\R^d)$.  A strong solution of \eqref{intro_spde} is a continuous, $\F_t$-adapted, $L^2$-valued process $\overline{\rho}\in L^2(\O\times[0,T];H^1(\R^d))$ that satisfies, for every $\psi\in\C^\infty_c(\R^d\times[0,T))$,
\[-\int_0^T\int_{\R^d}\rho^\ve\partial_t\psi = \int_{\R^d}g\psi(\cdot,0)-\int_0^T\int_{\R^d}\overline{a}\nabla\overline{\rho}\cdot\nabla\psi+\int_0^T\int_{\R^d}\psi\nabla\overline{\rho}\circ\Sigma\dd B_t+\int_0^T\int_{B_n}f\psi,\]
where $\circ$ denotes a Stratonovich integral.  \end{definition}

\begin{prop}\label{prop_wp_spde}  Assume~\eqref{steady}, let $(\F_t)_{t\in[0,\infty)}$ be a filtration on $(\O,\F)$, let $B$ be standard $\F_t$-adapted, $d$-dimensional Brownian motion on $(\O,\F)$, let $T\in(0,\infty)$, let $\ve\in(0,1)$, let $f\in(L^2\cap \C)(\R^d\times(0,T])$, and let $g\in(L^2\cap\C)(\R^d)$.  Then there exists a unique strong solution of the equation
\[\partial_t\overline{\rho} = \nabla\cdot\overline{a}\nabla\overline{\rho}+\nabla\overline{\rho}\circ\Sigma \dd B_t+f \;\;\textrm{in}\;\;\R^d\times(0,\infty)\;\;\textrm{with}\;\;\overline{\rho}(\cdot,0)=g.\]
Furthermore, the solution $\overline{\rho}$ satisfies $\P$-a.e.\ that
\[\norm{\overline{\rho}}_{L^\infty(\R^d\times[0,T])}\leq \norm{g}_{L^\infty(\R^d)}+T\norm{f}_{L^\infty(\R^d\times[0,T])},\]
and, for $c\in(0,\infty)$ depending on the ellipticity constants and $T$,
\[\max_{t\in[0,T]}\norm{\overline{\rho}}^2_{L^2(\R^d)}+\int_0^T\int_{\R^d}\abs{\nabla\rho^\ve}^2 \leq c\left(\norm{g}^2_{L^2(\R^d)}+\norm{f}^2_{L^2(\R^d\times[0,T])}\right).\]
\end{prop}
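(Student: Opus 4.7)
The plan is to reduce the SPDE to a family of pathwise deterministic parabolic equations via the change of variables already flagged in the rough-path remark following Theorem~\ref{thm_main}. Set $\tilde{\overline{\rho}}(x,t):=\overline{\rho}(x+\Sigma B_t,t)$ and $\tilde{f}(x,t):=f(x+\Sigma B_t,t)$. Because the transport $\Sigma B_t$ is spatially constant, the formal Stratonovich chain rule makes the noise term cancel against the drift generated by the composition, leaving the deterministic parabolic problem
\[
\partial_t\tilde{\overline{\rho}} \;=\; \nabla\cdot\overline{a}\nabla\tilde{\overline{\rho}}+\tilde{f}\quad\text{in}\;\R^d\times(0,T),\qquad \tilde{\overline{\rho}}(\cdot,0)=g,
\]
which has the advantage that $\overline{a}$ is constant and uniformly elliptic (by Proposition~\ref{prop_hom_mat}).

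For $\P$-a.e.\ $\omega$ the sample path $t\mapsto B_t(\omega)$ is continuous on $[0,T]$, so $\tilde f$ inherits from $f$ the $L^2\cap\C$ regularity together with the same $L^2$, $L^\infty$ and continuity moduli, since translation is an isometry of all three norms. The classical parabolic theory, in the form of Proposition~\ref{prop_well_posed} with $s^\ve\equiv 0$ and $\underline{b}^\ve\equiv 0$ (see Evans~\cite[Chapter~7]{Eva2010}), then produces pathwise a unique $\tilde{\overline{\rho}}(\omega)\in L^2([0,T];H^1(\R^d))\cap\C([0,T];L^2(\R^d))$ satisfying the stated $L^\infty$ and energy estimates with $\tilde f$ in place of $f$. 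Translating back, the candidate solution $\overline{\rho}(y,t):=\tilde{\overline{\rho}}(y-\Sigma B_t,t)$ inherits the same estimates. Adaptedness to $(\F_t)$ holds because the Galerkin approximations defining $\tilde{\overline{\rho}}$ depend only on $\tilde f(\cdot,s)$ for $s\leq t$, hence only on $B_s$ for $s\leq t$, and the composition with $\Sigma B_t$ preserves this property.

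To verify that the candidate is actually a strong solution of \eqref{intro_spde} one may proceed by a Wong--Zakai approximation: take smooth adapted approximations $B^n$ of $B$ converging $\P$-a.s.\ uniformly on $[0,T]$, form the corresponding $\tilde{f}^{\,n}$ and $\tilde{\overline{\rho}}^{\,n}$, and set $\overline{\rho}^{\,n}(y,t):=\tilde{\overline{\rho}}^{\,n}(y-\Sigma B^n_t,t)$. For smooth $B^n$ the ordinary chain rule and the computation of the rough-path remark show that $\overline{\rho}^{\,n}$ classically solves
\[
\partial_t\overline{\rho}^{\,n} \;=\; \nabla\cdot\overline{a}\nabla\overline{\rho}^{\,n} + \nabla\overline{\rho}^{\,n}\cdot\Sigma(B^n)'_t + f.
\]
Passing $n\to\infty$ in the weak formulation, the pathwise stability from Step~1 gives convergence of the linear terms, while the identification of the limiting martingale term with $\int_0^t \nabla\overline{\rho}\circ\Sigma\,\dd B_s$ is the standard Wong--Zakai identification of the Stratonovich integral. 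Uniqueness is then immediate: two strong solutions $\overline{\rho}_1,\overline{\rho}_2$ yield, via the same change of variables, two solutions $\tilde{\overline{\rho}}_{1},\tilde{\overline{\rho}}_{2}$ of the deterministic problem with identical data, which must therefore coincide $\P$-a.s.

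The main obstacle is the rigorous justification of the change of variables at the level of weak formulations: since $\overline{\rho}$ has only $H^1$ spatial regularity, the Stratonovich chain rule cannot be applied directly. I would handle this by mollifying $\overline{\rho}$ in space, applying the Itô--Wentzell formula to the smooth composition $\overline{\rho}^{\,\delta}(\cdot+\Sigma B_t,t)$, observing that the Itô--Stratonovich correction exactly cancels the second-order correction produced by the quadratic variation of $\Sigma B_t$ (this is where the constancy in space is essential), and then removing the mollification using the uniform energy estimates. Together with the translation invariance of the norms appearing in the proposition, this reduces every step to the deterministic parabolic theory, which is the reason this well-posedness result is no harder than its deterministic counterpart.
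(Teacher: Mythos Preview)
Your proposal is correct and follows essentially the same route as the paper's second (alternative) argument: reduce the Stratonovich SPDE to a pathwise deterministic parabolic equation via the substitution $\tilde{\overline{\rho}}(x,t)=\overline{\rho}(x+\Sigma B_t,t)$ and then invoke Proposition~\ref{prop_well_posed}. The only difference is in the technical justification of the change of variables: the paper appeals directly to Krylov's It\^o formula \cite[Theorem~3.1]{Kry2013}, whereas you propose a Wong--Zakai approximation or a mollification plus It\^o--Wentzell argument; all three are standard and interchangeable here.
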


\begin{proof}  The proof can be established directly on the level of the SPDE using a Galerkin approximation, the subsolution and supersolution  $s_{\pm}(t) = \norm{g}_{L^\infty(\R^d)}\pm \norm{f}_{L^\infty(\R^d\times[0,T])}$ and the comparison principle, and It\^o's formula (see, for example, Krylov \cite[Theorem~3.1]{Kry2013}) applied to the process $\overline{\rho}^2$.  Alternately, one can also immediately apply It\^o's formula \cite{Kry2013} to the function $\tilde{\overline{\rho}}(x,t) = \overline{\rho}(x+\Sigma B_t,t)$ which due to the Stratonovich form of the noise reduces the SPDE to a PDE with random coefficients.  The corresponding PDE can be handled as in Proposition~\ref{prop_well_posed} to complete the proof.  \end{proof}

\subsection{The proof of homogenization}\label{sec_main_hom}  We will now prove the homogenization of \eqref{intro_neq}.  The proof is based on a refinement of the perturbed test function method, where in this case, due to the low integrability of the drift and stream matrix, it is necessary to introduce stationary approximations of the corrector that introduce several errors that must be controlled.  We the Skorokhod representation theorem to establish the convergence of the $\rho^\ve$ in law.

\begin{thm}\label{thm_final}  Assume~\eqref{steady} and let $T\in(0,\infty)$, let $\ve\in(0,1)$, let $f\in(L^2\cap \C)(\R^d\times(0,T])$, and let $g\in(L^2\cap\C)(\R^d)$.  Then, as $\ve\rightarrow 0$, the solutions $\rho^\ve$ of the equation
\begin{equation}\label{hom_0}\partial_t\rho^\ve = \nabla \cdot (a^\ve+s^\ve) \nabla\rho^\ve+\ve^{-1}\underline{b}^\ve\cdot \nabla\rho^\ve+f \;\;\textrm{in}\;\;\R^d\times (0,T)\;\;\textrm{with}\;\;\rho^\ve(\cdot,0)=g,\end{equation}
converge in law on $L^2([0,T];H^1(\R^d))\cap\C([0,T];L^2(\R^d))$ to a solution of the equation
\[\partial_t\overline{\rho} = \nabla\cdot\overline{a}\nabla\overline{\rho}+\nabla\overline{\rho}\circ\Sigma \dd B_t+f \;\;\textrm{in}\;\;\R^d\times(0,\infty)\;\;\textrm{with}\;\;\overline{\rho}(\cdot,0)=g,\]
for a standard $d$-dimensional Brownian motion $B$.  \end{thm}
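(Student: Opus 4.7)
The plan is to combine a perturbed test function method based on the transpose correctors of Definition~\ref{def_correctors} with the Skorokhod representation theorem, using the transport $w^\ve(t)=\ve^{-1}\int_0^t\underline{b}(s/\ve^2)\,ds$ to remove the singular drift and convert the convergence problem into one for a standard parabolic equation.

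First I would pass to the transported solution $\tilde{\rho}^\ve(x,t)=\rho^\ve(x+w^\ve(t),t)$, which formally satisfies the well-behaved equation
\[\partial_t\tilde{\rho}^\ve=\nabla\cdot(\tilde{a}^\ve+\tilde{s}^\ve)\nabla\tilde{\rho}^\ve+\tilde{f}^\ve,\]
with initial data $g$, and establish tightness. The energy estimate from Proposition~\ref{prop_well_posed} yields uniform bounds for $\rho^\ve$ (and hence $\tilde{\rho}^\ve$) in $L^\infty([0,T];L^2(\R^d))\cap L^2([0,T];H^1(\R^d))$. Bounding $\partial_t\tilde{\rho}^\ve$ in a suitable negative-regularity space and applying an Aubin--Lions--Simon argument gives local strong compactness in $L^2$. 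Combining this with assumption \eqref{intro_weak_con} and Prokhorov's theorem yields joint tightness of the random tuple consisting of $\rho^\ve$, $\tilde{\rho}^\ve$, $w^\ve$, the transported flux $(\tilde{a}^\ve+\tilde{s}^\ve)\nabla\tilde{\rho}^\ve$, and the transported correctors $\tilde{\phi}_i^{t,\ve}$ on the appropriate separable product space.

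Next I would extract a subsequence and invoke the Skorokhod representation theorem to realize these variables as almost-surely convergent copies on a new probability space, with $w^\ve\to\Sigma B_t$ for some standard Brownian motion $B$, and identify the limit via the perturbed test function method. For $\psi\in\C^\infty_c(\R^d\times[0,T))$ I would use the transpose correctors of Proposition~\ref{prop_tcorrector} to build an approximate test function
\[\psi^\ve(x,t)=\psi(x-w^\ve(t),t)+\ve\,\tilde{\phi}_i^{t,\ve}(x,t)\,\partial_i\psi(x-w^\ve(t),t),\]
which, when tested against \eqref{hom_0}, reduces in the transported frame to an expression involving the effective operator $\nabla\cdot\overline{a}^t\nabla$ applied to $\psi$, plus error terms arising from the cross-products of $\tilde{\phi}_i^{t,\ve}$ with $\tilde{s}^\ve$, $\tilde{a}^\ve$, and $\nabla\tilde{\rho}^\ve$. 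The sublinearity statement in Proposition~\ref{prop_tcorrector} handles the contribution of the $\ve\tilde{\phi}_i^{t,\ve}$ factor, Lemma~\ref{lem_main} identifies the large-scale averages of the remaining transported stationary quantities, and the definition of the effective coefficient in Proposition~\ref{prop_hom_mat} (together with $\overline{a}=\overline{m}^t$) yields the elliptic limit operator. Undoing the translation using $w^\ve\to\Sigma B_t$ then produces the Stratonovich transport contribution, since the Stratonovich form is precisely what is preserved under change of variable along a semimartingale path.

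The principal obstacle is the low integrability: the stream matrix, drift and correctors are only in $L^2$, so the naively appearing cross-products (e.g.\ $\tilde{\phi}_i^{t,\ve}\,\tilde{s}^\ve\,\nabla\tilde{\rho}^\ve$) are merely in $L^{2/3}$ and cannot be passed to the limit by weak-strong convergence alone. To overcome this I would, following the strategy flagged in the introduction, replace $\phi_i^t$ by \emph{stationary} truncated correctors $\phi_{i,K}^t$ associated with bounded truncations $S_K,\underline{B}_K$ of the coefficients; these enjoy the same structural identities and satisfy the analogue of Lemma~\ref{lem_main} because they remain stationary, while the error introduced by the truncation is controlled by $\E[\abs{\Phi_i^t-\Phi_{i,K}^t}^2]+\E[\abs{S-S_K}^2]$, which vanishes as $K\to\infty$ by dominated convergence. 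A careful bookkeeping then allows one first to let $\ve\to 0$ using the almost-sure Skorokhod convergence and Lemma~\ref{lem_main}, and then to let $K\to\infty$, obtaining in the limit a weak formulation of the Stratonovich SPDE satisfied by $\overline{\rho}$. Uniqueness from Proposition~\ref{prop_wp_spde} upgrades subsequential convergence to convergence in law of the full family.
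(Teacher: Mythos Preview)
Your overall architecture --- transport by $w^\ve$, perturbed test functions built from the transpose correctors, tightness via Aubin--Lions--Simon, Skorokhod representation, identification of averages via Lemma~\ref{lem_main}, then undoing the transport to recover the Stratonovich noise --- matches the paper's proof exactly.  The gap is in your handling of the low-integrability obstacle.  Truncating the coefficients $S_K,\underline{B}_K$ in the corrector equation does \emph{not} produce stationary correctors: the construction of Proposition~\ref{prop_cor_1} only yields stationary \emph{gradients}, regardless of whether $S$ is bounded, so your claim that ``they remain stationary'' and hence fall under Lemma~\ref{lem_main} is false.  This matters precisely at the cross term you flag.  After integration by parts (using the skew-symmetry of $s$), the term $\int\tilde{s}^\ve\nabla\tilde{\rho}^\ve\cdot\nabla\partial_i\psi\,\ve\tilde{\phi}^{t,\ve}_{i,K}$ becomes $-\int\tilde{\rho}^\ve\,\nabla\cdot(\tilde{s}^\ve\ve\tilde{\phi}^{t,\ve}_{i,K})\cdot\nabla\partial_i\psi$, and here $\tilde{s}^\ve$ is the \emph{original} stream matrix coming from the equation for $\rho^\ve$, not your truncation.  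Expanding the divergence gives a piece $(\nabla\cdot\tilde{s}^\ve)\,\ve\tilde{\phi}^{t,\ve}_{i,K}$; since $\nabla\cdot\tilde{s}^\ve$ scales like $\ve^{-1}$ times a stationary $L^2$ quantity while $\ve\tilde{\phi}^{t,\ve}_{i,K}\to 0$ in $L^2$ without a rate, neither Lemma~\ref{lem_main} nor a direct H\"older estimate closes on this term.

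The paper resolves this by a different approximation: it adds a zeroth-order term to the corrector equation, solving
\[\delta\phi^t_{i,\delta}-\partial_t\phi^t_{i,\delta}=\nabla\cdot(a^t-s)(\nabla\phi^t_{i,\delta}+e_i)+\underline{b}\cdot\nabla\phi^t_{i,\delta},\]
which by the coercivity in $L^2(\Omega)$ furnished by the $\delta\phi$ term admits a solution $\Phi^t_{i,\delta}\in\mathcal{H}^1(\Omega)\cap L^2(\Omega)$ that is stationary \emph{as a function}.  Then $\nabla\cdot(\tilde{s}^\ve\ve\tilde{\phi}^{t,\ve}_{i,\delta})$ is the transport of a genuinely stationary $L^1$ quantity, Lemma~\ref{lem_main} applies, and its weak limit is $\E[D\cdot(S\Phi^t_{i,\delta})]=0$.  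The price is a new error term $\int\ve^{-1}\delta\,\tilde{\phi}^{t,\ve}_{i,\delta}\tilde{\rho}^\ve\partial_i\psi$, but a repetition of the energy argument of Proposition~\ref{prop_unique} shows that $\ve^{-1}\delta^{1/2}\tilde{\phi}^{t,\ve}_{i,\delta}$ is bounded in $L^2_{\mathrm{loc}}$ uniformly in $\ve$, so this error is $O(\delta^{1/2})$ and vanishes after sending $\delta\to 0$ (using that $\overline{a}^\delta\to\overline{a}$ by the uniqueness in Proposition~\ref{prop_unique}).  If you replace your coefficient truncation by this massive-term regularization, the rest of your outline goes through essentially as written.
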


\begin{proof}  For technical reasons that will be described below, it is necessary to introduce a stationary approximation of the transpose homogenization correctors.  For every $\d\in(0,1)$ and $i\in\{1,\ldots,d\}$ let $\Phi^t_{i,\d}\in \mathcal{H}^1(\O)\cap L^2(\O)$ be the unique stationary function satisfying that the vector
\[(-(D\cdot (A^t-S)(D\Phi^t_{i,\d}+e_i)+\underline{B}\cdot D\Phi^t_{i,\d}+\delta \Phi^t_{i,\d}), D\Phi^t_{i,\d}),\]
is space-time distributionally curl-free.  The functions are constructed using Proposition~\ref{prop_cor_1} using the fact that the additional coercivity means that the solutions exist as stationary functions.  It then follows that the $\phi^t_{i,\d}(t,x) = \Phi^t_{i,\d}(\tau_{x,t}\o)$ is a distributional solution of the equation
\[\d\phi^t_{i,\d} -\partial_t\phi^t_{i,\d} = \nabla\cdot(a^t-s)(\nabla\phi^t_{i,\d}+e_i)+\underline{b}\cdot\nabla\phi^t_{i,\d}.\]
For every $\ve\in(0,1)$ let $w^\ve = \ve^{-1}\int_0^t\underline{b}(\nicefrac{s}{\ve^2})\ds$, let $\phi^{t,\ve}_{i,\d}(x,t) = \phi^t_{i,\d}(\nicefrac{x}{\ve},\nicefrac{t}{\ve^2})$, and let $\tilde{\phi}^{t,\ve}_{i,\d}(x,t) = \phi^{t,\ve}_{i,\d}(x+w^\ve(t),t)$.  We have that $\tilde{\phi}^{t,\ve}_{i,\d}$ is a distributional solution of the equation
\[\ve^{-2}\d\tilde{\phi}^{t,\ve}_{i,\d}-\partial_t \tilde{\phi}^{t,\ve}_{i,\d}  = \nabla\cdot(\tilde{a}^{t,\ve}-\tilde{s}^\ve)(\nabla \tilde{\phi}^{t,\ve}_{i,\d} +e_i).\]
For every $R\in(0,\infty)$ it follows from a repetition of the proof of the energy inequality \eqref{energy} in Proposition~\ref{prop_unique} that there exists $c\in(0,\infty)$ depending on $R$ such that, $\P$-a.e. along a subsequence $\ve\rightarrow 0$,
\[\limsup_{\ve\rightarrow 0} \int_{\mcC_R}\ve^{-2}\d(\tilde{\phi}^{t,\ve}_{i,\d} )^2 \leq -\E\left[A^t D\Phi^t_{i,\d}\cdot D\Phi^t_{i,\d}+(A^t-S)e_i\cdot D\Phi^t_{i,\d}\right]<\infty. \]
We therefore conclude that, $\P$-a.e. along a subsequence $\ve\rightarrow 0$, the functions
\begin{equation}\label{hom_1}\ve^{-1}\d^\frac{1}{2}\tilde{\phi}^{t,\ve}_{i,\d}\;\;\textrm{are bounded uniformly in $\ve\in(0,1)$ in $L^2(\mcC_R)$.}\end{equation}
We will now proceed with the proof of homogenization via the perturbed test function method.

Let $\rho^\ve$ be the solution of \eqref{hom_0}, let $\tilde{\rho}^\ve(x,t) = \rho^\ve(x+w^\ve(t),t)$, and let $\psi\in\C^\infty_c(\R^d\times[0,T)$ be fixed.  For every $\ve,\d\in(0,1)$ define the perturbed test function $\psi^{\ve,\d}$ as
\[\psi^{\ve,\d} = \psi+\ve \tilde{\phi}^{t,\ve}_{i,\d}\partial_i\psi,\]
using Einstein's summation convention over repeated indices, and for the approximate transpose correctors defined above.  It then follows from an approximation argument, using that the final equation is stable with respect to strong convergence of the functions and coefficients, and weak convergence of the gradients, that
\begin{align*}
& -\int_0^T\int_{\R^d}\tilde{\rho}^\ve\partial_t\psi^{\ve,\d}-\int_{\R^d}g(x)\psi^{\ve,\d}(x,0)= -\int_0^T\int_{\R^d}(\tilde{a}^\ve+\tilde{s}^\ve)\nabla\tilde{\rho}^\ve\cdot\nabla\psi^{\ve,\d}+\int_0^T\int_{\R^d}\tilde{f}^\ve\psi^{\ve,\d}.
\end{align*}
Then, using the definition of $\psi^{\ve,\d}$ and the equation satisfied by $\tilde{\phi}^{t,\ve}_{i,\d}$,
\begin{align*}
& -\int_0^T\int_{\R^d}\tilde{\rho}^\ve\partial_t\psi-\int_0^T\int_{\R^d}\tilde{\rho}^\ve\ve\tilde{\phi}^{t,\ve}_{i,\d}\partial_t\partial_i\psi- \int_0^T\int_{\R^d}\ve^{-1}\d \tilde{\phi}^{t,\ve}_{i,\d}\tilde{\rho}^\ve\partial_i\psi-\int_{\R^d}g(x)\psi^{\ve,\d}(x,0)
\\ &  = \int_0^T\int_{\R^d}\tilde{\rho}^\ve (\tilde{a}^{t,\ve}-\tilde{s}^{t,\ve})(\nabla\tilde{\phi}^{t,\ve}_{i,\d}+e_i)\cdot \nabla\partial_i\psi - \int_0^T\int_{\R^d}\tilde{a}^\ve\nabla\tilde{\rho}^\ve\cdot\nabla\partial_i\psi \ve\tilde{\phi}^{t,\ve}_{i,\d}
\\ & \quad -\int_0^T\int_{\R^d}\tilde{s}^\ve\nabla\tilde{\rho}^\ve\cdot\nabla\partial_i\psi \ve \tilde{\phi}^{t,\ve}_{i,\d}+ \int_0^T\int_{\R^d}\tilde{f}^\ve\psi^{\ve,\d}.
\end{align*}
We then write, using the skew-symmetry of $s$,
\[\int_0^T\int_{\R^d}\tilde{s}^\ve\nabla\tilde{\rho}^\ve\cdot\nabla\partial_i\psi \tilde{\phi}^{t,\ve}_{i,\d} = -\int_0^T\int_{\R^d}\tilde{\rho}^\ve \nabla\cdot (\tilde{s}^\ve \ve \tilde{\phi}^{t,\ve}_{i,\d})\cdot\nabla\partial_i\psi,\]
to conclude that
\begin{align*}
& -\int_0^T\int_{\R^d}\tilde{\rho}^\ve\partial_t\psi-\int_0^T\int_{\R^d}\tilde{\rho}^\ve\ve\tilde{\phi}^{t,\ve}_{i,\d}\partial_t\partial_i\psi- \int_0^T\int_{\R^d}\ve^{-1}\d \tilde{\phi}^{t,\ve}_{i,\d}\tilde{\rho}^\ve\partial_i\psi-\int_{\R^d}g(x)\psi^{\ve,\d}(x,0)
\\ \nonumber &  = \int_0^T\int_{\R^d}\tilde{\rho}^\ve (\tilde{a}^{t,\ve}-\tilde{s}^{t,\ve})(\nabla \tilde{\phi}^{t,\ve}_{i,\d}+e_i)\cdot \nabla\partial_i\psi - \int_0^T\int_{\R^d}\tilde{a}^\ve\nabla\tilde{\rho}^\ve\cdot\nabla\partial_i\psi \ve\tilde{\phi}^{t,\ve}_{i,\d}
\\ \nonumber & \quad +\int_0^T\int_{\R^d}\tilde{\rho}^\ve \nabla\cdot (\tilde{s}^\ve \ve \tilde{\phi}^{t,\ve}_{i,\d})\cdot\nabla\partial_i\psi+ \int_0^T\int_{\R^d}\tilde{f}^\ve\psi^{\ve,\d}.
\end{align*}
We emphasize again that every term of the above equation is well-defined using the $L^\infty$-boundedness of $\tilde{\rho}^\ve$ and $\psi\in\C^\infty_c(\R^d\times[0,\infty))$.  It is therefore possible to first justify all of the above computations on an approximate level, and then use the strong convergence of the coefficients and approximate correctors and solutions, and the weak convergence of the gradients to pass to the limit.

The proof of homogenization will now follow from an argument analogous to that used in Lemma~\ref{lem_main}, once we have shown the tightness of all of the relevant random variables.  Let $n\in\N$.  It follows from Proposition~\ref{prop_well_posed} that the $\tilde{\rho}^\ve$ are uniformly bounded in $L^2(B_n\times[0,T])\cap L^2([0,T];H^1(B_n))$ and using the Sobolev embedding theorem that $\partial_t\tilde{\rho}^\ve$ is uniformly bounded in $L^1([0,T];H^{-s}(B_n))$ for every $s>\nicefrac{d}{2}+1$.  It then follows from the Aubin-Lions-Simon lemma \cite{Aub1963,Lio1969,Sim1987}, the compact embedding of $H^1(B_n)$ into $L^2(B_n)$, and the continuous embedding of $L^2(B_n)$ into $H^{-s}(B_n)$ that the laws of the $\rho^\ve$ are tight in the strong topology of $L^2([0,T];L^2(B_n))$, and the laws of
\[\left(\frac{\rho^\ve}{\norm{\rho^\ve}_{L^2([0,T];H^1(B_n))}}\mathbf{1}_{\left\{\rho^\ve \neq 0\right\}}, \norm{\rho^\ve}_{L^2([0,T];H^1(B_n))}\right)\;\;\textrm{are tight in}\;\;B_1(L^2([0,T];H^1(B_n)))\times \R,\]
in the metric topology of weak convergence on the unit ball $B_1(L^2([0,T];H^1(B_n)))$.  Let $m\in\N$ and let $\delta_m = m^{-1}$  It follows from \eqref{hom_1} that the laws of
\[\left(\frac{\ve^{-1}{\delta_m}^\frac{1}{2}\tilde{\phi}^{t,\ve}_{i,\d_m}}{\norm{\ve^{-1}{\delta_m}^\frac{1}{2}\tilde{\phi}^{t,\ve}_{i,\d_m}}_{L^2(B_n\times[0,T])}}\mathbf{1}_{\left\{\tilde{\phi}^{t,\ve}_{i,\d} \neq 0\right\}}, \norm{\ve^{-1}{\delta_m}^\frac{1}{2}\tilde{\phi}^{t,\ve}_{i,\d_m}}_{L^2(B_n\times[0,T])}\right),\]
are tight in $B_1(L^2(B_n\times[0,T]))\times \R$ in the metric weak topology of $B_1(L^2(B_n\times[0,T]))$.  We have that the laws of the $\ve\tilde{\phi}^{t,\ve}_{i,\d_m}$ are tight in the strong topology of $L^2(B_n\times[0,T])$ using either the stationarity or Proposition~\ref{prop_sublinear}.  We repeat the identical argument of Lemma~\ref{lem_main} to deduce that the laws of the random variables
\[\left(\frac{(\tilde{a}^{t,\ve}-\tilde{s}^{t,\ve})(\tilde{\phi}^{t,\ve}_{i,\d_m}+e_i)\dx\dt}{\norm{((\tilde{a}^{t,\ve}-\tilde{s}^{t,\ve})(\tilde{\phi}^{t,\ve}_{i,\d_m}+e_i)}_{L^1(B_n\times[0,T])}}, \norm{((\tilde{a}^{t,\ve}-\tilde{s}^{t,\ve})(\tilde{\phi}^{t,\ve}_{i,\d_m}+e_i)}_{L^1(B_n\times[0,T])}\right),\]
and the analogous random variables $\nabla\cdot(\tilde{s}^\ve\ve\tilde{\phi}^{t,\ve}_{i,\d_m})$ are tight in $\mathcal{M}(B_n\times[0,T])^d\times\R$.  We then consider the family of all such random variables, for every $i\in\{1,\ldots,d\}$,
\[X^{\ve,n,m} = (\rho^\ve,(\ve^{-1}\delta_m^\frac{1}{2}\tilde{\phi}^{t,\ve}_{i,\delta_m}),\ve\tilde{\phi}^{t,\ve}_{i,\d_m},(\tilde{a}^{t,\ve}-\tilde{s}^{t,\ve})(\tilde{\phi}^{t,\ve}_{i,\d_m}+e_i),\nabla\cdot(\tilde{s}^\ve\ve\tilde{\phi}^{t,\ve}_{i,\d_m})),\]
taking values in the produce metric space $\overline{X}^n$ defined by $B_n$ specified above that is independent of $m\in\N$.  It follows from the above that the laws are $(X^{\ve,n,m})_{\ve\in(0,1)}$ are tight on $X^n$ for every $n,m\in\N$.  Then, for every $\ve\in(0,1)$, we define $X^\ve = (X^{\ve,n,m})_{n,m\in\N}$ and conclude using the metric defined in Lemma~\ref{lem_main} that the laws of the $(X^\ve)_{\ve\in(0,1)}$ are tight in the space $\overline{X} = \prod_{n,m=1}^\infty \overline{X}^n$.

Let $\ve_k\rightarrow 0$ as $k\rightarrow\infty$ be a subsequence.  Then using Prokhorov's theorem and \eqref{intro_weak_con}, since the random variables $(X^{\ve_k},w^{\ve_k})$ are tight on the space $\overline{X}\times\C([0,T];\R^d)$, there exists a further subsequence still denoted $\ve_k\rightarrow 0$ as $k\rightarrow \infty$ and a probability measure $\mu$ on $\overline{X}\times\C([0,T];\R^d)$ such that, as $k\rightarrow\infty$,
\[(X^{\ve_k},w^{\ve_k})\rightharpoonup \mu\;\;\textrm{in distribution on $\overline{X}\times\C([0,T];\R^d)$.}\]
The remainder of the proof is then a consequence of the following considerations.  Suppose that for every $n,m\in\N$ we have that, along a subsequence $\ve\rightarrow 0$, the solutions $\tilde{\rho}^\ve$ were $\P$-a.e.\ converging strongly to some $\tilde{\overline{\rho}}$ in $L^2([0,T];L^2(B_n)$, that the $\tilde{\rho}^\ve$ were $\P$-a.e.\ converging weakly to the same $\tilde{\overline{\rho}}$ in $L^2([0,T];H^1(B_n))$, that the $w^\ve$ were $\P$-a.e.\ converging strongly to a Brownian motion $w$ in $\C([0,T];\R^d)$, that the $\ve\tilde{\phi}^{t,\ve}_{i,\d_m}$ were $\P$-a.e.\ converging strongly to zero in $L^2(B_n\times[0,T])$, and that the $\ve^{-1}\delta^\frac{1}{2}\tilde{\phi}^{t,\ve}_{i,\delta_m}$ are $\P$-a.e.\ uniformly bounded in $\ve\in(0,1)$ and $m\in\N$ in $L^2(B_n\times[0,T])$.  It would then follow that $\P$-a.e.\ the $\psi^{\ve,\d}$ converge strongly to $\psi$ in $L^2([0,T];L^2(B_n))$, and it would follow that, for some $c\in(0,\infty)$ depending on $\psi$ but  independent of $\d_m\in(0,1)$, for every $i\in\{1,\ldots,d\}$, 
\[\limsup_{\ve\rightarrow 0}\abs{\int_0^T\int_{\R^d}\ve^{-1}\d_m \tilde{\phi}^{t,\ve}_{i,\d_m}\tilde{\rho}^\ve\partial_i\psi}\leq c\delta_m^\frac{1}{2}.\]
It would follow as in Lemma~\ref{lem_main} that, $\P$-a.e.\ as $\ve\rightarrow 0$,
\[(\tilde{a}^{t,\ve}-\tilde{s}^{t,\ve})(\tilde{\phi}^{t,\ve}_{i,\d}+e_i)\rightharpoonup \overline{a}^{\d,t}\;\;\textrm{and}\;\;(\tilde{s}^\ve \ve \tilde{\phi}^{t,\ve}_{i,\d})\rightharpoonup \E\left[D(S D\Phi^t_{i,\d})\right]=0\;\;\textrm{weakly,}\]
where it is at this point that we use the stationarity of the $\tilde{\phi}^{t,\ve}_{i,\d}$ and where $\overline{a}^\d$ is the homogenized matrix defined by the $\Phi^t_{i,\d}$.  The separability of the space of compactly supported smooth functions in the sup-norm then proves that the limiting function $\tilde{\overline{\rho}}$ will satisfy, using Proposition~\ref{prop_hom_mat}, for every $\psi\in\C^\infty_c(\R^d\times[0,\infty))$, for $c\in(0,\infty)$ depending on $\psi$ but independent of $m$,
\[\abs{-\int_0^T\int_{\R^d}\tilde{\overline{\rho}}\partial_t\psi - \int_{\R^d}g\psi +\int_0^T\int_{\R^d} \overline{a}^\d\nabla\tilde{\overline{\rho}}\cdot\nabla\psi - \int_0^T\int_{\R^d}f(x+w(t),t)\psi }\leq c\d_m\;\;\textrm{with}\;\;\tilde{\overline{\rho}}(\cdot,0)=g.\]
Since it follows from the uniqueness of Proposition~\ref{prop_unique} that, as $m\rightarrow\infty$, we have that the $\Phi^t_{i,\d_m}$ converge weakly in $L^2_{\textrm{pot}}(\O)$ to $\Phi^t_i$, it follows that, as $m\rightarrow\infty$, we have that $\overline{a}^\d\rightarrow\overline{a}$.  Therefore, after passing $m\rightarrow\infty$ in the above equation, we have $\P$-a.e.\ that
\begin{equation}\label{final}\partial_t\tilde{\overline{\rho}}=\nabla\cdot\overline{a}\nabla\tilde{\overline{\rho}}+f(x+w(t),t)\;\;\textrm{with}\;\;\rho(\cdot,0)=g.\end{equation}
After defining $\overline{\rho}(x,t) = \tilde{\overline{\rho}}(x-w(t),t)$, it follows from It\^o's formula \cite{Kry2013} that $\overline{\rho}$ is a solution to the the SPDE
\[\partial_t\overline{\rho} = \nabla\cdot\overline{a}\nabla\overline{\rho}+\nabla\overline{\rho}\circ\Sigma \dd B_t+f \;\;\textrm{in}\;\;\R^d\times(0,\infty)\;\;\textrm{with}\;\;\overline{\rho}(\cdot,0)=g,\]
which is pathwise unique and therefore unique in law by Proposition~\ref{prop_wp_spde}.

A repetition of the argument of Lemma~\ref{lem_main} in the above setting proves with the Skorokhod representation theorem that there exist $\overline{X}\times\C([0,T];\R^d)$-valued random variables $(Y_k,W_k)$ on some probability space $(S,\mcS,\mathbf{P})$ with distribution equal to $(X^{\ve_k},w^{\ve_k})$.  The above considerations prove that the functions defined by the $Y_k$ converge in distribution on $L^2([0,T];H^1(B_n))\cap \C([0,T];L^2(\R^d))$ to a solution of the SPDE \eqref{final}.  We therefore conclude that, along this further subsequence $k\rightarrow\infty$, the $\rho^{\ve_k}$ converge in law to a solution of \eqref{final}.  Since the subsequence was arbitrary, this completes the proof.  \end{proof}

\appendix

\section{The stream matrix}\label{sec_stream}

In this section, we will recall the essentially well-known fact that an $L^2$-integrable, divergence-free drift $B$ admits a stationary stream matrix $S$ provided the dimension $d\geq 3$ and provided the correlations of $B$ decay faster than a square.  This is only a small modification of the analogous results in \cite{Koz1985}, \cite[Proposition~4, Proposition~5]{KozTot2017}, and \cite[Proposition~2.7]{Feh2022} which we include here to illustrate how it is that the conditional expectation appears.

\begin{prop} Assume~\eqref{steady}.  Let $d\geq 3$ and let $B\in L^2(\O;\R^d)$ satisfy that $\E[B]=0$, that $D\cdot B=0$ distributionally, and that, for some $c\in(0,\infty)$ and $\beta\in(2,\infty)$, for every $i,j\in\{1,\ldots,d\}$,
\begin{equation}\label{sm_1}\E\left[\abs{B_i(\tau_x \o)B_j(\tau_y\o)}\right]\leq c(\abs{x-y}\vee 1)^{-\beta}.\end{equation}
Then there exists skew-symmetric matrix $S=(S_{jk})_{j,k\in\{1,\ldots,d\}}\in L^2(\O;\R^{d\times d})$ that distributionally satisfies
\[D\cdot S = B-\underline{B}\;\;\textrm{on}\;\;\O.\]
\end{prop}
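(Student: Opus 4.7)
The plan is to reduce to the centered drift $\tilde B := B - \underline B$, build a regularized stream matrix on $L^2(\O)$ via Lax--Milgram, and pass to the weak $L^2$-limit as the regularization vanishes, with the correlation decay \eqref{sm_1} combined with $d \geq 3$ supplying the uniform $L^2$-bound. Since $\underline B$ is $\F_{\R^d}$-measurable, it is invariant under spatial shifts, so $D_i \underline B = 0$ in $\mathcal D'(\O)$ for $i=1,\ldots,d$, and $\tilde B$ remains spatially divergence-free with $\E[\tilde B \mid \F_{\R^d}] = 0$ and with the same correlation decay (up to a constant). For each $\alpha \in (0,1)$ and each ordered pair $j<k$, Lax--Milgram on $\mathcal H^1(\O)$ produces the unique $S^\alpha_{jk}$ solving
\[
\alpha S^\alpha_{jk} - \sum_{i=1}^d D_i^2 S^\alpha_{jk} = D_k \tilde B_j - D_j \tilde B_k \quad \textrm{in } \mathcal D'(\O),
\]
extended skew-symmetrically by $S^\alpha_{kj} := -S^\alpha_{jk}$ and $S^\alpha_{jj} := 0$. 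Testing against $S^\alpha_{jk}$ itself and pairing the right-hand side in $\mathcal H^{-1}\times \mathcal H^1$ gives the elementary uniform bound $\|\nabla_D S^\alpha_{jk}\|_{L^2(\O)} \leq 2\|\tilde B\|_{L^2(\O)}$.

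The central new ingredient is the $\alpha$-uniform $L^2(\O;\R^{d\times d})$-bound on $S^\alpha$ itself. By the spectral theorem for the unitary group $\{f \mapsto f\circ \tau_{x,0}\}_{x\in\R^d}$ on $L^2(\O)$, the spatial covariance $K_{jk}(x) := \E[\tilde B_j\,(\tilde B_k \circ \tau_{x,0})]$ is the Fourier transform of a positive matrix-valued measure $\mu$ on $\R^d$, which carries no mass at $\xi = 0$ because $\E[\tilde B\mid \F_{\R^d}]=0$. A direct spectral computation then yields
\[
\sum_{j<k}\E\bigl[|S^\alpha_{jk}|^2\bigr]
\;\leq\; C\int_{\R^d} \frac{|\xi|^2}{(\alpha+|\xi|^2)^2}\,d(\tr\mu)(\xi)
\;\leq\; C\int_{\R^d}\frac{d(\tr\mu)(\xi)}{|\xi|^2}
\;=\; Cc_d\int_{\R^d}\frac{\tr K(x)}{|x|^{d-2}}\,dx,
\]
the final equality using Parseval with $|\xi|^{-2}$ being the Fourier transform of the Newtonian potential $c_d|x|^{2-d}$, available in $d \geq 3$. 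Splitting this integral at $|x|=1$, the $|x|\leq 1$ piece is controlled by the boundedness of $\tr K$ and the local integrability of $|x|^{2-d}$ when $d \geq 3$, while the $|x|>1$ piece is controlled by \eqref{sm_1} and $\beta>2$ via $\int_1^\infty r^{1-\beta}\,dr<\infty$.

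Given the uniform $\mathcal H^1$-bound on $S^\alpha$, extract a weak $\mathcal H^1(\O;\R^{d\times d})$-limit $S$ along a subsequence $\alpha_n \downarrow 0$; $S$ is skew-symmetric, and passing to the limit against $\psi \in \mathcal D(\O)$ in the weak formulation produces the distributional identity $-\sum_i D_i^2 S_{jk} = D_k \tilde B_j - D_j \tilde B_k$. Taking the horizontal divergence in $j$ and using $D\cdot \tilde B = 0$ yields $-\sum_i D_i^2 (D\cdot S - \tilde B)_k = 0$, so the field $V := D\cdot S - \tilde B \in L^2(\O;\R^d)$ lies in the kernel of the horizontal spatial Laplacian, which by the spectral theorem is precisely the space of $\F_{\R^d}$-measurable vector fields. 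Since $\E[\tilde B \mid \F_{\R^d}] = 0$ by construction and $\E[D\cdot S \mid \F_{\R^d}] = 0$ by the same integration-by-parts argument that proves Proposition \ref{prop_normalize} (applied to each component $D_i S_{ij}$), the $\F_{\R^d}$-measurable field $V$ has vanishing conditional expectation and therefore vanishes identically, giving $D\cdot S = B - \underline B$. The main obstacle of the proof is the uniform spectral bound in paragraph two; once it is secured, the weak-compactness and algebraic identification steps are routine.
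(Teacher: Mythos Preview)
Your argument is correct and reaches the same conclusion as the paper, but the two proofs differ in the key step of obtaining the $\alpha$-uniform $L^2(\O)$ bound on the regularized stream matrix. The paper writes $S_\alpha$ explicitly via the heat-semigroup representation
\[
S_\alpha(\o)=\int_0^\infty\int_{\R^d}(4\pi s)^{-\frac{d}{2}}e^{-\alpha s-\frac{|x|^2}{4s}}\frac{x}{2s}\cdot X(\tau_x\o)\,dx\,ds,
\]
squares, and reduces to the double integral $\int\int |x|^{1-d}|y|^{1-d}(|x-y|\vee 1)^{-\beta}\,dx\,dy$, which is estimated by hand using $d\geq 3$ and $\beta>2$. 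You instead invoke the spectral theorem for the unitary group $\{U_x\}_{x\in\R^d}$ and Bochner's theorem to pass to the spectral measure $\mu$, bound by $\int|\xi|^{-2}\,d(\tr\mu)$, and use Parseval with the Newtonian potential to land on $\int|x|^{2-d}\tr K(x)\,dx$. The spectral route is conceptually cleaner and makes the role of the condition $\E[\tilde B\mid\F_{\R^d}]=0$ (i.e.\ $\mu(\{0\})=0$) transparent; the paper's real-variable route avoids the Bochner/Parseval machinery and works entirely with the pointwise hypothesis \eqref{sm_1}. A second, smaller difference is in the final identification: the paper shows $(D\cdot S)_i-B_i$ is spatially harmonic, transfers to $\R^d$ via convolution, and appeals to the sublinear Liouville theorem together with the ergodic theorem to conclude it equals $-\underline B_i$; you instead characterize the $L^2(\O)$-kernel of the horizontal Laplacian spectrally as the $\F_{\R^d}$-measurable functions and finish with conditional expectations. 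One remark: your assertion that $\tilde B$ inherits the correlation decay \eqref{sm_1} ``up to a constant'' is correct but not entirely for the reason you suggest---in fact \eqref{sm_1} already forces $\E[|\underline B|^2]=\lim_{R\to\infty}\fint_{B_R}\fint_{B_R}\E[B\cdot(B\circ\tau_{x-y})]\,dx\,dy=0$, so $\tilde B=B$; this makes your centering step harmless but also vacuous.
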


\begin{proof}  We will begin with a preliminary calculation.  Let $X=(X_i)_{i\in\{1,\ldots,d\}}\in L^\infty(\O;\R^d)$ be an arbitrary random variable satisfying \eqref{sm_1}, and for every $\a\in(0,1)$ let $S_\a\in\mathcal{H}^1(\O)$ denote the unique Lax-Milgram solution of the equation
\begin{equation}\label{sm_02}\a S_\a-D\cdot DS_\a=D\cdot X\;\;\textrm{in}\;\;\mathcal{H}^1(\O)\cap L^2(\O).\end{equation}
Since the boundedness of $X$ guarantees that we have the representation
\begin{align*}
S_\a(\o) & =\int_0^\infty \int_{\R^d}(4\pi s)^{-\frac{d}{2}}e^{-\a s-\frac{\abs{x}^2}{4s}}\frac{x}{2s}\cdot X(\tau_x\o)\dx\ds
\\ \nonumber  & =(4\pi)^{-\frac{d}{2}}\int_0^\infty \int_{\R^d}\abs{x}^{1-d}s^{-(\frac{d}{2}+1)}e^{-\a s\abs{x}^2-\frac{1}{4s}}\frac{x}{2\abs{x}}\cdot X(\tau_x\o)\dx\ds,
\end{align*}
it follows that there exists $c\in(0,\infty)$ depending on $d$ such that
\[\E[\abs{S_\a}^2]\leq c\int_{\R^d}\int_{\R^d}\abs{x}^{1-d}\abs{y}^{1-d}\E[\abs{X(\tau_x\o)}\abs{X(\tau_y)}]\dx\dy,\]
and then by \eqref{sm_1}, for some $c\in(0,\infty)$ depending on $d$,
\[\E[\abs{S_\a}^2]\leq c\int_{\R^d}\int_{\R^d}\abs{x}^{1-d}\abs{y}^{1-d}(\abs{x-y}\vee 1)^{-\beta}\dx\dy.\]
For every $x\in\R^d\setminus\{0\}$, we observe using $\beta\in(2,\infty)$ that, for some $c\in(0,\infty)$ depending on $d$,
\begin{align*}
& \int_{\R^d}\abs{y}^{1-d}(\abs{x-y}\vee 1)^{-\beta} =\int_{\{\abs{x-y}\leq 1\}}\abs{y}^{1-d}+\int_{\{\abs{x-y}>1\}}\abs{y}^{1-d}\abs{x-y}^{-\beta}
\\ & \leq c(\abs{x}\vee 1)^{1-d}+\int_{\{\abs{y}\geq 1\}}\abs{x-y}^{1-d}\abs{y}^{-\beta} \leq c(\abs{x}\vee 1)^{1-d}+\abs{x}^{1-d-\beta}\int_{\abs{y}\geq 1}\left(\frac{\abs{y}}{\abs{x}}\right)^{1-d}\abs{\frac{x}{\abs{x}}-\frac{y}{\abs{x}}}^{-\beta}
\\ & \leq c(1\vee \abs{x})^{1-d}+\abs{x}^{1-\beta}\sup_{\{\abs{x}=1\}}\int_{\R^d}\abs{y}^{1-d}\abs{x-y}^{-\beta}\leq c\left((\abs{x}\vee 1)^{1-d}+\abs{x}^{1-\beta}\right),
\end{align*}
where here we actually require only $\beta\in(1,\infty)$ to guarantee that the supremum is finite.  We then have that, for some $c\in(0,\infty)$ depending on $d$, for every $\a\in(0,1)$,
\[\E[\abs{S_\a}^2]\leq c\int_{\R^d}\abs{x}^{1-d}\left(\abs{x}^{1-d}+\abs{x}^{1-\beta}\right) \leq c,\]
which, after switching to radial coordinates, is finite and bounded uniformly in $\a\in(0,1)$ because $d\geq 3$ and $\beta\in(2,\infty)$.  It then follows from \eqref{sm_02} that, for some $c\in(0,\infty)$ depending on $d$ and \eqref{sm_1} but independent of $\a\in(0,1)$,
\begin{equation}\label{sm_3} \E[\abs{S_\a}^2+\E\left[\abs{DS_\a}^q\right]\leq c\left(1+\E\left[\abs{X}^2\right]\right).\end{equation}
After passing to a subsequence $\a\rightarrow 0$ there exists $S\in \mathcal{H}^1(\O)\cap L^2(\O)$ such that
\[S_\a\rightharpoonup S\;\;\textrm{weakly in}\;\;L^2(\O)\;\;\textrm{and}\;\;DS_\a\rightharpoonup DS\;\;\textrm{weakly in}\;\;L^2_{\textrm{pot}}(\O),\]
from which it follows that
\begin{equation}\label{sm_4}-D\cdot DS = D\cdot X\;\;\textrm{in}\;\;L^2_\textrm{pot}(\O).\end{equation}
The density of bounded functions in $L^2(\O;\R^d)$ proves that for every $X\in L^2(\O;\R^d)$ satisfying \eqref{sm_1} there exists a unique $S\in \mathcal{H}^1(\O)\cap L^2(\O)$ satisfying \eqref{sm_4} and the estimates of \eqref{sm_3}.

Now let $B=(B_i)_{i\in\{1,\ldots,d\}}\in L^2(\O;\R^d)$ for some $p\in[2,\infty)$ be mean zero and divergence-free in the sense that $\E[B]=0$ and $D\cdot B=0$, and let $B$ satisfy \eqref{sm_1}.  For every $j,k\in\{1,\ldots,d\}$ let $S_{jk}\in \mathcal{H}^1(\O)\cap L^2(\O)$ be the unique Lax-Milgram solution of the equation
\[-D\cdot DS_{jk}=D_jB_k-D_kB_j,\]
and let $S = (S_{jk})_{j,k\in\{1,\ldots,d\}}$.  We will now prove that $D\cdot S=B - \underline{B}$ in $L^2(\O;\R^d)$.  To see this, observe distributionally that, for every $i\in\{1,\ldots,d\}$, applying Einstein's summation convention,
\[D\cdot D(D\cdot S)_i = D_kD_k D_jS_{ij}  = D_j(D_jB_i-D_iB_j) = D\cdot D B_i.\]
That is, for every $i\in\{1,\ldots,d\}$, the difference $(D\cdot S_i-B_i)$ is spatially harmonic on $\O$.  It then follows that, for $s(x,t)=S(\tau_{x,t}\o)$ and $b(x,t) = B(\tau_{x,t}\o)$, for standard convolution kernels $\kappa^\ve$ of scale $\ve\in(0,1)$, we have from Proposition~\ref{prop_sublinear} that $((\nabla\cdot s)_i-b_i)*\kappa^\ve$ is $\P$-a.e.\ harmonic and sublinear on $\R^d$ for every $i\in\{1,\ldots,d\}$.  The zeroth order Liouville theorem therefore proves that $((\nabla\cdot s)_i-b_i)*\kappa^\ve$ is $\P$-a.e.\ constant on $\R^d$, and by the ergodic theorem we have $\P$-a.e.\ that
\[((D\cdot S)_i-B_i)*\kappa^\ve = \E\left[((D\cdot S)_i-B_i)*\kappa^\ve|\mcF_{\R^d}\right].\]
After passing to the limit $\ve\rightarrow 0$, we having using Proposition~\ref{prop_normalize} that
\[(D\cdot S)-B = \E[(D\cdot S)-B|\mathcal{F}_{\R^d}] = -\underline{B},\]
which completes the proof.  \end{proof}

\section{The convergence of the $w^\ve$ in law.}\label{sec_path}  In this section we provide general conditions under which the hypothesis \eqref{intro_weak_con} is satisfied.  We observe that the conditions below are always satisfied if $\underline{B}$ satisfies a finite range of dependence, or decorrelates quickly enough in time.

\begin{prop}  Assume~\eqref{steady}, let $T\in(0,\infty)$, and let $\underline{B}(\tau_{0,t}\o) = \underline{b}(t) = (\underline{b}_1,\ldots,\underline{b}_d)$.  For every $j\in \Z$ and $i\in\{1,\ldots,d\}$ let $x_{i,j} = \int_{j-1}^j\underline{b}_i(s)\ds$, let $x_j = (x_{1,j},\ldots,x_{d,j})$, and let $\F_0$ be the sigma algebra
\[\F_0 = \sigma(x_{i,j}\colon j\leq 0\;\;\;\textrm{and}\;\;i\in\{1,\ldots,d\}).\]
It follows from \eqref{steady} that $\E[x_{i,0}]=0$ and $\E[\abs{x_{i,0}}^2]<\infty$ for every $i\in\{1,\ldots,d\}$ and assume in addition that, for every $\theta\in\R^d$,
\[\sum_{j=1}^\infty\abs{\E\left[\theta\cdot x_j|\F_0\right]}\;\;\textrm{and}\;\; \langle\Sigma\Sigma^t\theta,\theta\rangle :=\E[\abs{\theta\cdot x_j}^2]+\sum_{j=2}^\infty \E\left[(\theta\cdot x_j)(\theta\cdot x_j)\right],\]
converge absolutely $\P$-a.e.\ for some positive definite matrix $\Sigma\in \R^{d\times d}$.  Finally, assume that for some $\d\in(0,1)$ we have that $\underline{B}\in L^{2+\d}(\O)$.  Then, the path $w^\ve(t) = \ve^{-1}\int_0^t\underline{b}(\nicefrac{s}{\ve^2})\ds$ satisfies that, as $\ve\rightarrow 0$,
\[ w^\ve \rightarrow \Sigma B_t\;\;\textrm{in law on $\C([0,T];\R^d)$},\]
for a standard $d$-dimensional Brownian motion $B$.  \end{prop}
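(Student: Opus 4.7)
The plan is to reduce to a functional central limit theorem for the stationary integer-indexed sequence $(x_j)_{j \in \Z}$ and then handle the passage from discrete partial sums to the continuous path. A direct change of variables gives $w^\ve(n\ve^2) = \ve \sum_{j=1}^n x_j$ for every $n \in \N$, so the task is (a) to show the partial-sum path $t \mapsto \ve \sum_{j=1}^{\lfloor t/\ve^2\rfloor} x_j$ converges in law on $\C([0,T];\R^d)$ to $\Sigma B_t$, and (b) to control the oscillation of $w^\ve$ over intervals of length $\ve^2$.

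By the Cram\'er-Wold device, (a) reduces to showing that $\theta\cdot w^\ve$ converges in law to a real Brownian motion with variance $\langle \Sigma\Sigma^t \theta,\theta\rangle t$ for each $\theta\in\R^d$. The hypothesis that $\sum_{j=1}^\infty |\E[\theta\cdot x_j | \F_0]| < \infty$ converges absolutely is Gordin's condition, which I use to perform a martingale approximation. I set $g_\theta = \sum_{k=1}^\infty \E[\theta\cdot x_k | \F_0]$, which lies in $L^2$ by the conditional Jensen inequality and the $L^{2+\d}$ integrability of $\underline B$, define $r_j = g_\theta\circ\tau_{0,j}$, and observe that $d_j = \theta\cdot x_j - (r_{j-1}-r_j)$ is a stationary ergodic martingale difference sequence with respect to $\F_j = \sigma(x_k: k\le j)$. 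A direct computation identifies $\E[d_0^2]$ with the quantity $\langle \Sigma\Sigma^t \theta,\theta\rangle$ supplied in the hypothesis, since the telescoping coboundary absorbs precisely the cross-covariance tail.

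With this decomposition in hand, Billingsley's FCLT for stationary ergodic martingale differences \cite[Chapter~4]{Bil1999} yields the convergence of $\ve\sum_{j=1}^{\lfloor t/\ve^2\rfloor} d_j$ to $\langle \Sigma\Sigma^t\theta,\theta\rangle^{1/2} B_t$ on $\C([0,T])$ in law. The coboundary contribution telescopes to $\ve(r_{\lfloor t/\ve^2\rfloor} - r_0)$, which vanishes uniformly on $[0,T]$ in probability by stationarity of the $r_j$ and the $\ve$-prefactor. For the continuous-to-discrete comparison in (b), stationarity and a maximal inequality on each unit interval give, for some constant $c$ and every $i$,
\[
\E\left[\sup_{s\in[j-1,j]} \Big|\int_{j-1}^s \underline b_i(u)\, du\Big|^{2+\d}\right] \le c\,\E[|\underline B_i|^{2+\d}],
\]
and a union bound over the $O(\ve^{-2})$ relevant subintervals together with the $\ve$-scaling yields $\sup_{t\in[0,T]}|w^\ve(t)-\ve\sum_{j=1}^{\lfloor t/\ve^2\rfloor}x_j|\to 0$ in probability as $\ve\to 0$.

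The main technical step is constructing the Gordin martingale decomposition from the summability condition and checking that the asymptotic variance of $d_0$ is exactly the matrix $\Sigma\Sigma^t$ given in the hypothesis; the key algebraic miracle is that the series defining $g_\theta$ organises precisely the non-diagonal terms of $\E[(\theta \cdot \sum_{j=1}^n x_j)^2]/n$ into a telescoping coboundary. Once this decomposition is in place, the remainder of the argument is a routine combination of the martingale FCLT, the Cram\'er-Wold device, and tightness via the $L^{2+\d}$ moment bound.
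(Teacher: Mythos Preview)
Your strategy matches the paper's: reduce to a one-dimensional FCLT for the stationary partial sums $S^\theta(n)=\sum_{j=1}^n\theta\cdot x_j$ via Cram\'er--Wold, then control the oscillation of $w^\ve$ over intervals of length $\ve^2$ using the $L^{2+\d}$ moment. The paper, however, does not unpack the Gordin martingale approximation; it simply invokes \cite[Chapter~4, Theorem~19.1]{Bil1999} as a black box, works first in the Skorokhod space $\mathcal{D}([0,T];\R^d)$, and transfers to $\C([0,T];\R^d)$ at the end using the continuity of $w^\ve$ and of the Brownian limit. For the remainder, the paper uses Chebyshev together with Borel--Cantelli rather than your union bound; both arguments hinge on the same inequality $\P(\int_{j-1}^j|\underline b|\ge \eta/\ve)\le c(\ve/\eta)^{2+\d}$, and either yields the required convergence in probability.

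Two small points on your write-up. First, the step process $t\mapsto \ve\sum_{j=1}^{\lfloor t/\ve^2\rfloor}x_j$ is c\`adl\`ag, not continuous, so the FCLT should be stated in $\mathcal{D}([0,T])$ (or for the linearly interpolated process) before passing to $\C$, as the paper does. Second, your claim that $g_\theta=\sum_{k\ge1}\E[\theta\cdot x_k\,|\,\F_0]$ lies in $L^2$ does not follow from ``conditional Jensen and $L^{2+\d}$ integrability'' alone: the stated hypothesis is almost-sure absolute convergence of the series, which does not by itself give $L^2$ convergence. Since the paper sidesteps this by citing Billingsley's theorem (whose hypotheses are precisely those of the proposition), you would need either to check that the cited theorem accepts the almost-sure form of Gordin's condition or to argue the $L^2$ bound separately before the decomposition can be used as you describe.
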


\begin{proof}  For every $\theta\in\R^d$ and $n\in\N$ let $S^\theta(n) = \sum_{j=1}^n (X_j\cdot \theta)$.  It is then a consequence of \cite[Chapter~4, Theorem~19.1]{Bil1999} that, for every $\theta\in\R^d$ and $\ve\in(0,1)$, the processes
\[t\in[0,\infty)\mapsto \ve S(\lfloor \nicefrac{t}{\ve^2} \rfloor),\]
converges in distribution as $\ve\rightarrow 0$ to a one-dimensional Brownian motion with quadratic variation $\langle \Sigma\Sigma^t\theta,\theta\rangle$ in the Skorokhod space $\mathcal{D}([0,T])$.  It then follows from Prokhorov's theorem that the laws of the $(\ve S(\lfloor \nicefrac{t}{\ve^2} \rfloor))_{\ve\in(0,1)}$ are tight on $\mathcal{D}([0,T];\R^d)$, and it follows from Levy's characterization of Brownian motion that the processes converge in law to a standard $d$-dimensional Brownian motion with covariance matrix $\Sigma\Sigma^t$.  To conclude, we observe by definition that, for every $\eta\in(0,1)$,
\[\P\left(\sup_{t\in[0,T]}\abs{w^\ve(t)-\ve S(\lfloor \nicefrac{t}{\ve^2}\rfloor)}>\eta\right)\leq \P\left(\sup_{1\leq j\leq \lfloor \nicefrac{t}{\ve^2}\rfloor+1}\int_{j-1}^j\abs{b(s)}\ds\geq \sqrt{n}\eta\right).\]
It then follows that
\begin{align*}
& \limsup_{\ve\rightarrow 0}\P\left(\sup_{t\in[0,T]}\abs{w^\ve(t)-\ve S(\lfloor \nicefrac{t}{\ve^2}\rfloor)}>\eta\right)
\\ & \leq \P\left(\int_{n-1}^n\abs{b(s)}\ds\geq \sqrt{n}\eta\;\;\textrm{for infinitely may $n\in\N$}\right).
\end{align*}
The assumptions, the time-stationarity, and Chebyshev's inequality prove that, for every $n\in\N$,
\[\P\left(\int_{n-1}^n\abs{b(s)}\ds\geq \sqrt{n}\eta\right)\leq n^{-\frac{2+\d}{2}}\eta^{-(2+\d)}\E\left[\int_{n-1}^n\abs{b(s)}^{2+\d}\ds\right] =  n^{-\frac{2+\d}{2}}\eta^{-(2+\d)}\E[\underline{B}^{2+\delta}].\]
It then follows from the Borel-Cantelli lemma and $\d\in(0,1)$ that
\[\P\left(\int_{n-1}^n\abs{b(s)}\ds\geq \sqrt{n}\eta\;\;\textrm{for infinitely may $n\in\N$}\right)=0,\]
which proves that, as $\ve\rightarrow 0$,
\begin{equation}\label{path_1} \abs{w^\ve-\ve S(\lfloor \nicefrac{t}{\ve^2}\rfloor)}\rightarrow 0\;\;\textrm{in probability in $\mathcal{D}([0,T])$.}\end{equation}
The distributional convergence of the $(\ve S(\lfloor \nicefrac{t}{\ve^2}\rfloor))_{\ve\in(0,1)}$ and the convergence in probability \eqref{path_1} imply that the paths $w^\ve$ converge in distribution in $\mathcal{D}([0,T])$ to $\Sigma B_t$.  The convergence in $\C([0,T];\R^d)$ then follows from the continuity of the paths, which completes the proof.  \end{proof}

\section*{Acknowledgements}

The author acknowledges financial support from the Engineering and Physical Sciences Research Council of the United Kingdom through the EPSRC Early Career Fellowship EP/V027824/1. 

\bibliographystyle{plain}
\bibliography{div_free}

\end{document}